\documentclass[11pt,reqno]{amsart}
\usepackage{tikz}
\textheight    23cm
\textwidth     15.cm
\addtolength{\textheight}{-0.75in}
\oddsidemargin   .4cm
\evensidemargin  .4cm
\parskip 6pt
\usepackage{subfig}
\usepackage{epstopdf}
\usepackage{epsfig}
\usepackage{math}
\usepackage{rotating}
\graphicspath{{../figures/},{"../../Code/hierarchical models/FiguresAdamNew/"},{"../../Code/hierarchical models/Type0/"}}
\usepackage{tikz}
\usetikzlibrary{shapes,arrows}
\tikzstyle{decision} = [diamond, draw, fill=blue!20, 
    text width=4.5em, text badly centered, node distance=3cm, inner sep=0pt]
\tikzstyle{block} = [rectangle, draw, fill=blue!20, 
    text width=5em, text centered, rounded corners, minimum height=4em]
\tikzstyle{line} = [draw, -latex']
\tikzstyle{cloud} = [draw, ellipse,fill=red!20, node distance=3cm,
    minimum height=2em]

\usetikzlibrary{positioning}
\tikzset{main node/.style={circle,fill=blue!20,draw,minimum size=1cm,inner sep=0pt},  }

\usepackage{soul}
\usepackage{color}
\newcommand{\wc}[1]{{\color{blue}{#1}}}
\usepackage{hyperref}
\usepackage{graphicx} \usepackage{placeins}

\begin{document}
\title[Wasserstein information matrix]{Wasserstein information matrix}
\author[Li]{Wuchen Li}
\author[Zhao]{Jiaxi Zhao}
\email{wcli@math.ucla.edu,zjx98math@gmail.com}

\newcommand{\vr}{\overrightarrow}
\newcommand{\wt}{\widetilde}
\newcommand{\dd}{\mathcal{\dagger}}
\newcommand{\ts}{\mathsf{T}}

\keywords{Wasserstein information matrix; Wasserstein score function; Wasserstein-Cramer-Rao inequality; Wasserstein online efficiency; Poincar{\'e} efficiency}
\maketitle

\begin{abstract}
We study information matrices for statistical models by the $L^2$-Wasserstein metric. We call them Wasserstein information matrices (WIMs), which are analogs of classical Fisher information matrices. We introduce Wasserstein score functions and study covariance operators in statistical models. Using them, we establish Wasserstein-Cramer-Rao bounds for estimations and explore their comparisons with classical results. We next consider the asymptotic behaviors and efficiency of estimators. We derive the online asymptotic efficiency for Wasserstein natural gradient. Besides, we study a Poincar{\'e} efficiency for Wasserstein natural gradient of maximal likelihood estimation. Several analytical examples of WIMs are presented, including location-scale families, independent families, and rectified linear unit (ReLU) generative models.
\end{abstract}

\section{Introduction}
Fisher information matrix plays essential roles in statistics, physics, and differential geometry with applications in machine learning \cite{Amari1990_differentialgeometrical,NG,ay2017information,casella2002statistical,EIT}. In statistics, it is a fundamental quantity for the estimation theory, including both design and analysis of estimators. In particular, the maximal likelihood principle is a well-known example. It connects the Fisher information matrix to another concept, named score functions. They frequently arise in statistical efficiency and sufficiency problems, especially for Cramer-Rao bound and Fisher-efficiency.  

Fisher information matrix is also named Fisher-Rao metric in information geometry \cite{IG}. It uses the Fisher information matrix to study divergence functions and their invariance properties \cite{IG}. Furthermore, the Fisher information matrix is also useful for statistical learning problems. In particular, the natural gradient method \cite{NG} rectifies the gradient direction by the Fisher information matrix. It is shown that the Fisher natural gradient method is asymptotically online Fisher-efficient.    
\par
On the other hand, optimal transport introduces the other metric in probability space \cite{villani2003topics,villani2008optimal}, often named Wasserstein metric \cite{Lafferty,otto2001}. Different from information geometry, it encodes the geometry of sample space into the definition of metric in probability space.
Nowadays, it is known that the Wasserstein metric intrinsically connects the Kullback-Leibler (KL) divergence with Fisher information functional \cite{otto2001}, known as de Bruijn identities \cite{zozor2015debruijn}.
Many concentration inequalities such as log-Sobolev inequalities and Poincar{\'e} inequalities arise naturally \cite{otto2000generalization}. 

Despite various studies of optimal transport in full probability space, not much is known in parametric statistical models, which play crucial roles in parametric statistics. Fundamental questions arise: {\em Is there a statistical theory based on optimal transport? Compared to Fisher information matrices and Fisher statistics, what are counterparts of information matrices, score functions, Cramer-Rao bounds, and online efficiencies of natural gradient methods in Wasserstein statistics? Moreover, can this theory provide statistical tools for machine learning models, especially for generative models?}

In this paper, following key ideas in \cite{LiG}, we positively answer the above questions by introducing a Wasserstein information matrix (WIM). We derive the WIM by pulling back the Wasserstein metric from full probability space to finite-dimensional parametric statistical models \cite{LM,LiMontufar2018_riccia}. We show that the WIM defines Wasserstein score functions with a Wasserstein covariance operator of estimators. Based on them, a Wasserstein-Cramer-Rao bound is derived. Furthermore, combining WIM with Wasserstein score functions, we recover an asymptotic efficiency property of the online Wasserstein natural gradient methods.

Meanwhile, by comparing both Wasserstein and Fisher information matrices, we naturally prove several concentration inequalities such as log-Sobolev inequalities and Poincar\'e inequalities within statistical models. Extending the study in full probability space, we further decompose a Hessian term and study the Ricci-Information-Wasserstein (RIW) criterion for log-Sobolev inequalities and Poincar\'e inequalities in statistical models. Here we provide several examples in analytic probability families. Those functional inequalities turn out to be essential in a new efficiency property named Poincar{\'e} efficiency. This is concerned with dynamics where the Wasserstein natural gradient works on Fisher score functions (related to maximal likelihood estimators). We prove convergence rate analysis for these dynamics. Several numerical experiments are provided to confirm our conclusions.

Lastly, we demonstrate that the WIM provides a clear statistical theory for complicated models coming from machine learning approaches, especially implicit generative models. For example, we carefully study a one-dimensional probability family generated by push-forward maps based on the ReLU function. We demonstrate that the WIM still exists in this family while the classical Fisher information matrix does not exist. In other words, it is suitable to introduce a statistical theory based on WIMs. It can be a theoretical background for machine learning implicit models. 
\par

In literature, there have been lots of works attempting to use tools from optimal transport and information geometry to study statistical problems. \cite{2017arXiv170105146B} designs new estimators for parametric inference using Wasserstein distance. This idea is utilized in approximating Bayesian computation. The authors apply Wasserstein distance to measure the similarity between synthetic and observed data sets.
Compared to them, we focus on the study of estimation and efficiency of WIMs. We expect it could have potential properties in Wasserstein estimators. In \cite{briol2019statistical}, they design a generalized information matrix based on a maximum mean discrepancy. Compared to them, we majorly focus on information matrices generated by the Wasserstein metric and study related statistical properties. Most closely, \cite{petersen2019wasserstein} defines a Wasserstein covariance by applying a closed-form formula for one-dimensional Wasserstein metric. This is a canonical definition. Our approach further extends this idea into general parametric models. We start by introducing the WIM in parametric models. Using it, we define Wasserstein score functions as well as the Wasserstein covariance operator. We further establish the Wasserstein-Cramer-Rao bound and associated statistical efficiency properties. Also, \cite{Wong2018_logarithmic} defines several new divergence functions by combining knowledge from both optimal transport and information geometry. Here we focus on statistical properties of WIMs in statistical models. Furthermore, Wasserstein natural gradient method has been widely studied in optimization techniques with machine learning applications \cite{ArbelGrettonLiMontufar2019_kernelized,chen2018natural,mallasto2019formalization,li2019affine, lin2019wasserstein}. 
Here we focus on statistical theory and study its associated online efficiency. Compared with classical online Fisher-efficiency results in \cite{NG, Ollivier2018_online}, our results can deal with general information matrices. In particular, for WIMs, we discover a new efficiency property named Poincar\'{e} efficiency. It relies on a comparison between Wasserstein and Fisher information matrices, demonstrate its connection with Poincar\'{e} inequalities.

The paper is organized as follows. In section~\ref{sec2}, we establish the definition of the WIM. We present it analytically for several well-known probability families. We provide an explicit example of WIMs for ReLU generative models. Under this model, we show that the WIM exists while the Fisher information matrix does not exist. In section~\ref{sec-WCR}, with the introduction of the Wasserstein covariance, the Wasserstein-Cramer-Rao inequality is established. In section~\ref{Efficiency}, we introduce and discuss both Wasserstein efficiency and Poincar{\'e} efficiency.

\begin{table}[h]
\setlength{\belowcaptionskip}{-0.cm}
\setlength{\floatsep}{0cm}
\centering
    \resizebox{\textwidth}{0.20\textwidth}{
\begin{tabular}{|l|c|c|}
\hline 
Probability Family & Wasserstein information matrix & Fisher information matrix \\
\hline
$
\begin{aligned}
    & \text{Uniform:}        \\
    & p(x;a,b) = \frac{1}{b - a}\mathbf{1}_{(a,b)}(x)
\end{aligned} $
 & $G_W(a,b) = \frac{1}{3} \begin{pmatrix}
1 & \frac{1}{2}\\
\frac{1}{2} &  1
\end{pmatrix}$ & $G_F(a,b)$ not well-defined 
\\
\hline
$
\begin{aligned}
    & \text{Gaussian:}        \\
    & p(x;\mu,\sigma)=\frac{e^{-\frac{1}{2\sigma^2}(x-\mu)^2}}{\sqrt{2\pi} \sigma}
\end{aligned}$ & $G_W(\mu,\sigma)=\begin{pmatrix}
1& 0 \\
0 &  1
\end{pmatrix}$ &  $G_F(\mu,\sigma) = \begin{pmatrix}
			\frac{1}{\sigma^2} & 0		\\
			0 & \frac{2}{\sigma^2}
		\end{pmatrix}$    
\\
\hline
$
\begin{aligned}
    & \text{Exponential:}        \\
    & p(x;m,\lambda) = \lambda e^{-\lambda\lp x - m \rpz}
\end{aligned}$
& $G_W(m,\lambda)=\begin{pmatrix}
    1 & \frac{1}{\lambda^2}     \\
    \frac{1}{\lambda^2} & \frac{2}{\lambda^4}
\end{pmatrix}$ & $G_F(m,\lambda)$ not well-defined 
\\    
\hline
$
\begin{aligned}
    & \text{Laplacian:}        \\
    & p(x;m,\lambda) = \frac{\lambda}{2}e^{-\lambda|x-m|}
\end{aligned}
 $ & $G_W(m,\lambda)=\begin{pmatrix}
    1 & 0     \\
    0 & \frac{2}{\lambda^4}
\end{pmatrix}$ & 
$G_F(m,\lambda) = \begin{pmatrix}
			\lambda^2 & 0		\\
			0 & \frac{1}{\lambda^2}
		\end{pmatrix}$ 
\\    
\hline
$\begin{aligned}
    & \text{Location-scale:}        \\
    & p(x;m,\lambda) = \frac{1}{\lambda}p(\frac{x - p}{\lambda})
\end{aligned}$ & $G_W(\lambda,m)=\begin{pmatrix}
    \frac{\mathbb{E}_{\lambda,m}x^2-2m\mathbb{E}_{\lambda,m}x+m^2}{\lambda^2} & 0     \\
    0 & 1
\end{pmatrix}$ & 
$G_F(\lambda,m) = \begin{pmatrix}
			\frac{1}{\lambda^2} \lp 1 + \int_{\RR} \lp \frac{\lp x - m \rpz^2 p'^2}{\lambda^2 p} + \frac{\lp x - m \rpz p'}{\lambda}\rpz dx  \rpz & \int_{\mathbb{R}}  \frac{(x - m)p'^2}{\lambda^3p} dx		\\
			\int_{\mathbb{R}}  \frac{(x - m)p'^2}{\lambda^3p} dx & \frac{1}{\lambda^2}\int_{\mathbb{R}} \frac{p'^2}{p} dx
		\end{pmatrix}$ 
		\\    
\hline
$
\begin{aligned}
    & \text{Independent:}        \\
    & p(x,y;\theta) = p_1(x;\theta)p_2(y;\theta)
\end{aligned}
 $ & $G_W(\theta) = G_W^1(\theta) + G_W^2(\theta)$ &
$G_F(\theta) = G_F^1(\theta) + G_F^2(\theta)$    \\    
\hline
$
\begin{aligned}
    \text{ReLU} &\text{ push-forward:}        \\
    & p(x;\theta) = f_{\theta*}p(x), \\ f_{\theta} \ \theta&\text{-parameterized ReLUs, Ex. }\ref{ReLU-ex}.
\end{aligned}
 $ & $\begin{aligned}
        G_W \lp \theta \rpz & = F\lp \theta \rpz, \\
        F \text{ cdf of } p(x) & , \ F\lp y \rpz = \int_{-\infty}^y p(x)dx.
        \end{aligned}$ &
$G_F(\theta)$ not well-defined   \\    
\hline
\end{tabular}}
\label{table1}
\caption{In this table, we present Wasserstein, Fisher information matrices for probability families.}
\end{table}
\begin{table}[h]
\setlength{\belowcaptionskip}{-0.cm}
\setlength{\floatsep}{0cm}
\vspace{-0.0cm}
\centering
\resizebox{\textwidth}{0.12\textwidth}{
\begin{tabular}{|l|c|c|c|}
\hline 
 Family & Entropy functional & Fisher-information functional & Log-Sobolev inequality(LSI($\alpha$))\\
\hline
Gaussian & $\begin{aligned}
        \wtd H(p_{\mu,\sigma}) = & \ - \half \log 2\pi - \log \sigma - \half,         \\
        \wtd H(p_{\mu,\sigma}|p_{\mu_*, \sigma_*}) = & \  - \log \sigma + \log \sigma_* - \half \\
        & \ + \frac{\sigma^2 + \lp \mu - \mu_* \rpz^2}{2\sigma_*^2}.
    \end{aligned}$ & $\begin{aligned}
        \wtd I(p_{\mu,\sigma}) & = \frac{1}{\sigma^2},          \\
        \wtd I(p_{\mu,\sigma}|p_{\mu_*,\sigma_*}) & = \frac{\lp \mu - \mu_* \rpz^2}{4\sigma_*^4} + \lp - \frac{1}{\sigma} + \frac{\sigma}{\sigma_*^2} \rpz^2.
    \end{aligned}$&
    $\begin{aligned}
        \wtd H(p_{\mu, \sigma}|p_{\mu_*, \sigma_*}) & < \frac{1}{2\alpha}\wtd I(p_{\mu, \sigma}|p_{\mu_*, \sigma_*}),\\
        \mu, \sigma & > 0.
    \end{aligned}        $
	\\    
\hline
Laplacian & $\begin{aligned}
        \wtd H(p_{m,\lambda}) = & \ - 1 + \log \lambda - \log 2,           \\
        \wtd H(p_{m,\lambda}|p_{m_*, \lambda_*}) = & \  - 1 + \log \lambda - \log \lambda_* \\
        & \ + \lambda_*\lv m - m_* \rv + \frac{\lambda_* e^{ - \lambda\lv m - m_* \rv}}{\lambda}.    \end{aligned}$ &$\begin{aligned}
        & \wtd I(p_{\lambda,m}) = \  \frac{\lambda^2}{2},                \\
        & \wtd I(p_{m,\lambda}|p_{m_*,\lambda_*}) = 
            \lambda_*^2\lp 1 - e^{ - \lambda\lv m - m_* \rv}\rpz^2 \\
            & + \frac{\lp \lp \lambda\lv m - m_* \rv + 1 \rpz\lambda_*e^{ - \lambda\lv m - m_* \rv} - \lambda \rpz^2}{2}.
    \end{aligned}$ &     $
    \begin{aligned}
        \wtd H(p_{\lambda, m}|p_{\lambda_*, m_*}) < & \ \frac{1}{2\alpha}\wtd I(p_{\lambda, m}|p_{\lambda_*, m_*}),      \\
      m \in & \ \RR, \lambda > 0.
    \end{aligned}
        $
	\\
\hline
\end{tabular}}
\label{table2}
\caption{In this table, we continue to list the entropy functional, the Fisher information functionals, log-Sobolev inequalities for probability families.} 
\end{table}

\section{Wasserstein information matrix and score functions}\label{sec2}
In this section, we present Wasserstein information matrices (WIMs) and score functions. Several analytical studies are presented.

Given a sample space $\mathcal{X}\subset \mathbb{R}^n$, let $\mathcal{P}(\mathcal{X})$ denote the space of probability distributions over $\mathcal{X}$. Given a metric tensor $g$ on $\mathcal{P}(\mathcal{X})$, we call $(\mathcal{P}(\mathcal{X}),g)$ density manifold. Consider a parameter space $\Theta\subset \mathbb{R}^d$ and a parameterization function
\begin{equation*}
    p \colon \Theta \rightarrow \cp \lp\cx \rpz, \quad \theta \mapsto p_{\theta}
\end{equation*}
which can also be viewed as
$
p \colon \mathcal{X}\times \Theta\rightarrow \mathbb{R}. 
$
Here $\Theta$ is named a statistical model. Denote $\langle f, h\rangle=\int_{\mathcal{X}} f(x) h(x) dx$ for the $L^2(\mathcal{X})$ inner product, where $dx$ refers to the Lebesgue measure on $\mathcal{X}$. And we denote by $\lp v,w \rpz= v \cdot w$ the (pointwise) Euclidean inner product of two vectors.
\subsection{Information matrix}
We first review metric tensors on parameter space and connect them with information matrices.   
\begin{definition}[Statistical information matrix]
	Consider the density manifold $(\mathcal{P}(\mcX), g)$ with a metric tensor $g$, and a smoothly parametrized statistical model $ p_\theta$ with parameter $\theta\in\Theta \subset \RR^d$. 
	Then the pull-back metric $G \in \mbR^{d \times d}$ of $g$ onto this parameter space $\Theta$ is given by
	\begin{equation*}
	G(\theta)=\Big\langle \nabla_\theta p_\theta, g( p_\theta) \nabla_\theta p_\theta\Big\rangle.
	\end{equation*}
Denote $G(\theta)=(G(\theta)_{ij})_{1\leq i,j\leq d}$, then 
\begin{equation*}
G(\theta)_{ij}=\int_{\mathcal{X}}{\frac{\partial}{\partial \theta_i}p(x;\theta)\Big(g(p_\theta)\frac{\partial}{\partial \theta_j}p\Big)(x;\theta)}dx.
\end{equation*}
Here we name $g$ statistical metric, and call $G$ statistical information matrix.
\end{definition}
 In geometry, using this metric tensor $g$ to rise(resp. lower) indices, there exists a canonical isomorphism from tangent(resp. cotangent) space to cotangent(resp. tangent) space, namely:
    \begin{equation*}
        \begin{aligned}
            g(p) & : T_{p}\cp(\cx) \simeq T_{p}^*\cp(\cx), \qquad f \mapsto \lb g(p)\lp f \rpz \rb,       \\
            g(p)^{-1} & : T_{p}^*\cp(\cx) \simeq T_{p}\cp(\cx), \qquad [f] \mapsto g(p)^{-1}\lp f \rpz .
        \end{aligned}
    \end{equation*}
    Thus the metric tensor can actually be viewed as an operator between these two spaces. The above tangent space $T_{p}\cp(\cx)$ is identified with the function space:
\begin{equation*}
    T_{p}\cp(\cx) \simeq C_0(\mathcal{X}) = \{f \in C(\cx) | \int_{\cx}fdx = 0\},
\end{equation*}
where $C(\cx)$ is the function space of continuous function on the space $\cx$. And its dual space $C(\mathcal{X})/\RR$, i.e. $f, h \in C(\mathcal{X}), f \sim h \text{ if } f = h + a, a \in \RR$, can be identified with the cotangent space of the density manifold: $T_{p}^*\cp(\cx) \simeq C(\mathcal{X})/\RR$. We use $\lb f \rb$ to represent the equivalent class of this function in $C(\mathcal{X})/\RR$. And the pairing between tangent spaces and cotangent spaces is merely $\la f,h \ra = \int_{\cx}fhdx$,
where we abuse the symbol $\langle , \rangle$ for the inner product. We note here that tangent spaces of a statistical model $\Theta$ can be viewed as subspaces of that of density manifold, i.e. we have inclusion:
\begin{equation*}
    T_{p}\Theta \hookrightarrow T_{p}\cp(\cx).
\end{equation*}
While taking the dual of this inclusion we get projection from the cotangent space of the density manifold to that of the statistical model:
\begin{equation*}
    T_{p}^*\cp(\cx) \rightarrow T_{p}^*\Theta.
\end{equation*}

An approach in information geometry is that one can reinterpret the metric tensor in the dual coordinates, i.e. cotangent space.
\begin{definition}[Score function]
Denote $\Phi_i$ $\colon \mathcal{X}\times\Theta\rightarrow \mathbb{R}, i = 1,...,n $ satisfying
$$\Phi_i(x;\theta)= \lb g(p) \lp \frac{\partial}{\partial \theta_i}p(x;\theta) \rpz \rb .$$
We call $\Phi_i$s score functions associated with the statistical information matrix $G$ and are equivalent classes in $C(\mathcal{X})/\RR$. The representatives in equivalent classes are determined by the following normalization condition:
\begin{equation}\label{normalization}
    \EE_{p_\theta} \Phi_i = 0,\qquad i = 1,...,n.
\end{equation}
Then the statistical information matrix satisfies  
\begin{equation*}
G(\theta)_{ij}=\int_{\mathcal{X}} \Phi_i(x;\theta)\Big(g(p_\theta)^{-1}\Phi_j\Big)(x;\theta)dx.
\end{equation*}
\end{definition}
\begin{remark}
    The normalization condition is an enforced condition. It fixes a representative for the score function in the equivalent class. And we assume that score functions are always integrable w.r.t. $p_{\theta}$.
\end{remark}
In above, there are two formulations of metric tensor, which use the following fact 
$g(p)^{-1}=g(p)^{-1}g(p)g(p)^{-1}$. Thus 
\begin{equation*}
\begin{split}
	G(\theta)_{ij}=& \ \Big\langle \nabla_{\theta_i} p_\theta, g( p_\theta) \nabla_{\theta_j} p_\theta \Big\rangle\\
	=& \ \Big\langle g(p_\theta)\nabla_{\theta_i} p_\theta, g( p_\theta)^{-1}  g(p_\theta)\nabla_{\theta_j} p_\theta\Big\rangle\\
=& \  \Big\langle \Phi_i, g(p_\theta)^{-1}\Phi_j\Big\rangle.
\end{split}
\end{equation*}

\begin{example}
    One important choice of metric is the Fisher-Rao metric:
    \begin{equation*}
        \begin{aligned}
            g(p) & : T_{p}\cp(\cx) \simeq T_{p}^*\cp(\cx), \qquad f \mapsto \lb \frac{f}{p} \rb,       \\
            g(p)^{-1} & : T_{p}^*\cp(\cx) \simeq T_{p}\cp(\cx), \qquad [f] \mapsto  p\lp f-E_p f \rpz .
        \end{aligned}
    \end{equation*}
    In this case, the statistical information matrix satisfies  
		\begin{equation*}
		G_{F}(\theta)_{ij}=\langle \frac{\partial}{\partial \theta_i} p_\theta, \frac{1}{ p_\theta}\frac{\partial}{\partial \theta_j} p_\theta \rangle=\int_\mathcal{X}\frac{\frac{\partial}{\partial \theta_i}p(x;\theta)\frac{\partial}{\partial \theta_j}p(x;\theta)}{p(x;\theta)}dx. 
		\end{equation*}
        And score functions of Fisher information matrix form
$$\Phi_i^F(x;\theta)=\frac{1}{p(x;\theta)}{\frac{\partial}{\partial \theta_i}p(x;\theta)}=\frac{\partial}{\partial \theta_i}\log p(x;\theta),$$ where the normalization condition holds automatically. In terms of score functions, the Fisher information matrix forms 
\begin{equation*}
\begin{split}
G_F(\theta)_{ij}=&\int_{\mathcal{X}}\Phi_i^F(x;\theta)\Big(g_F(p)^{-1}\Phi_j^F\Big)(x;\theta)dx\\
=&\int_{\mathcal{X}}\frac{\partial}{\partial \theta_i}\log p(x;\theta)\frac{\partial}{\partial \theta_j}\log p(x;\theta) p(x;\theta)dx\\
=&\ \mathbb{E}_{p_\theta}\lp \frac{\partial}{\partial \theta_i}\log p(x;\theta)\frac{\partial}{\partial \theta_j}\log p(x;\theta) \rpz.
\end{split}
\end{equation*}
In literature, $\Phi_i^F(x;\theta)=\frac{\partial}{\partial \theta_i}\log p(x;\theta)$ is named (Fisher) {\em score function}; while $G_F(\theta)$ is the {\em Fisher information matrix}. They play important roles in estimation, efficiency and Cramer-Rao bound. 
\end{example}
\begin{remark}
    The definition of Fisher score functions can be given in classical statistics as the gradient of the log-likelihood function w.r.t. parameters. Here we view it as an object on cotangent space associated with the Fisher-Rao metric on statistical models. That is, we have a family of canonical tangent vector fields $\frac{\pa}{\pa \theta_i} p_\theta$ on statistical models. Whenever there is a metric $g(p_\theta)$ on this manifold, we can define score functions associated with it as:
    \begin{equation*}
        \Phi_i\lp x; \theta \rpz = g(p_\theta)\frac{\pa}{\pa \theta_i} p_\theta\lp x; \theta \rpz.
    \end{equation*}
\end{remark}
From above fact, we observe that statistical concepts are related to the metric tensor in density manifold pull-back onto parameter space. In particular, classical statistics relates to the Fisher-Rao metric.  
The pull-back metric tensor forms an information matrix while dual variables define score functions. In this paper, we derive these notations in the other important statistical metric, known as the Wasserstein metric. 
\subsection{Wasserstein information matrix}
The other statistical metric, namely Wasserstein metric tensor forms 
\begin{equation*}
g_W(p)=(-\Delta_p)^{-1},\quad \textrm{where $\Delta_{ p}=\nabla\cdot( p\nabla)$.}
\end{equation*}
Here $\Delta_p$ is an elliptic operator weighted on a probability density $p$. When $p$ satisfies suitable conditions, standard PDE theory guarantees that the operators $\Delta_p^{-1}$ and $\Delta_p$ are an inverse to each other between function spaces:
\begin{equation*}
    \begin{aligned}
        \Delta_p^{-1} :& \   C_0(\mathcal{X}) \rightarrow C(\mathcal{X})/\RR ;       \\
        \Delta_p :& \  C(\mathcal{X})/\RR \rightarrow C_0(\mathcal{X}).
    \end{aligned}
\end{equation*}
The pull-back $G_W$ of $g_W$ is given by
	\begin{equation*}
	\begin{split}
	G_W(\theta)_{ij}= \langle \frac{\partial}{\partial \theta_i} p_\theta, (-\Delta_{p_\theta})^{-1}\frac{\partial}{\partial \theta_j} p_\theta\rangle.
         \end{split}
	\end{equation*}
Similar to the Fisher information matrix, we can rewrite $G_W$ by dual coordinates. Denote 
\begin{equation*}
\Phi_i^W(x;\theta)=(-\Delta_{p_\theta})^{-1}\frac{\partial}{\partial \theta_i}p(x;\theta).
\end{equation*}
Then 
\begin{equation*}
\begin{split}
G_W(\theta)_{ij}=& \ \langle \frac{\partial}{\partial \theta_i} p_\theta, (-\Delta_{p_\theta})^{-1}\frac{\partial}{\partial \theta_j} p_\theta\rangle\\
=& \ \langle \Phi_i^W, (-\Delta_{p_\theta}) \Phi_j^W\rangle\\
=& \ \int_{\mathcal{X}}(\nabla_x\Phi_i^W(x;\theta), \nabla_x\Phi_j^W(x;\theta))p(x;\theta)dx,
\end{split}
\end{equation*}
where the last equality holds by integration by parts w.r.t. $x$.

We summarize the above fact into the following definition. 
 \begin{definition}[Wasserstein information matrix \& score function]
 Denote $G_W(\theta)\in \mathbb{R}^{d\times d}$: 
 \begin{equation*}
G_W(\theta)_{ij}=\mathbb{E}_{p_\theta} \lb \nabla_x\Phi_i^W(x;\theta) \cdot \nabla_x\Phi_j^W(x;\theta)\rb,
\end{equation*}
where $\cdot$ refers to the inner product of vector and $\Phi_i^W \colon \mathcal{X}\times \Theta\rightarrow \mathbb{R}$ satisfies   
\begin{equation*}
    \begin{aligned}
        -\nabla_x\cdot(p(x;\theta)\nabla_x\Phi_i^W(x;\theta))=\frac{\partial}{\partial \theta_i}p(x;\theta), \quad 
        \EE_{p_\theta} \Phi_i^W = 0, \quad i = 1,2,...,d.
    \end{aligned}
\end{equation*}
We name functions $\Phi_i^W(x;\theta)=\Big((-\Delta_{p_\theta})^{-1}\frac{\partial}{\partial \theta_i} p_\theta\Big)(x;\theta)$ {\em Wasserstein score functions}, and call the matrix $G_W(\theta)$ the {\em Wasserstein information matrix}. 
  \end{definition}
  \begin{remark}\label{motivation}
This definition of information matrices is motivated by an intrinsic connection among distances, divergence functions, and metrics. 
            Specifically, given a smooth family of probability densities $p(x;\theta)$ and a given perturbation  $\Delta\theta\in T_\theta\Theta$,
            consider following Taylor expansions in term of $\Delta\theta$:
      \begin{equation} \label{motivation-equ}
          \begin{aligned}
            H(p(\theta)\| p(\theta+\Delta\theta)) & = \frac{1}{2}\Delta\theta^{\ts}G_F(\theta)\Delta\theta + o( (\Delta\theta)^2 ),       \\
            W_2( p(\theta+\Delta\theta), p(\theta))^2 & = \Delta\theta^{\ts}G_W(\theta)\Delta\theta + o( (\Delta\theta)^2).      
        \end{aligned}
  \end{equation}
          Here $H$ represents the Kullback--Leibler (KL) divergence or the relative entropy functional 
      \begin{equation*}
           H(p(\theta)\| p(\theta+\Delta\theta))=\int_{\mathcal{X}} p(x;\theta)\log\frac{p(x;\theta)}{p(x;\theta+\Delta\theta)}dx.
      \end{equation*}
    While $W_2^2$ denotes the squared $L^2$-Wasserstein distance defined by 
      \begin{equation}\label{linear-pro}
            W_2(p(\theta), p(\theta+\Delta\theta))^2 = \inf_{\pi \in \Pi\lp p(\theta), p(\theta+\Delta\theta) \rpz}\Big\{ \int_{\cx \times \cx} d_{\cx}\lp x, y \rpz^2 d\pi\lp x, y \rpz \Big\},
      \end{equation}
      where $\Pi\lp p(\theta), p(\theta+\Delta\theta) \rpz$ refers to the set of couplings between $p(\theta)$, $p(\theta+\Delta\theta)$ and $d_{\cx}$ is a distance function defined in $\cx$. Thus our approach parallels classical Fisher statistics. The Fisher information matrix approximates the KL divergence, which relates to the Fisher distance in Fisher geometry \cite{Amari1990_differentialgeometrical,ay2017information}, while WIM approximates the Wasserstein distance in Wasserstein geometry. Meanwhile, our approach can be viewed as exploring another aspect, namely metric aspect, of the Wasserstein statistics. For example, it can be related to the study of Wasserstein estimators \cite{2017arXiv170105146B}. 
  \end{remark}

We next study several basic properties of WIMs and score functions. We first illustrate a relation between Wasserstein and Fisher score functions.
\begin{proposition}[Poisson equation]\label{prop4}
Wasserstein score functions $\Phi^W_i(x;\theta)$ satisfy the following Poisson equation
\begin{equation}\label{ws}
\nabla_x \log p(x;\theta)\cdot \nabla_x\Phi^W_i(x;\theta)+\Delta_x\Phi^W_i(x;\theta)=-\frac{\partial}{\partial \theta_i}\log p(x;\theta).
\end{equation}
\end{proposition}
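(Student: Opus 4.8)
The plan is to read the claimed Poisson equation off the defining PDE of the Wasserstein score function by a single product-rule expansion followed by normalization against the density. Recall that $\Phi_i^W$ is characterized by $-\nabla_x\cdot(p(x;\theta)\nabla_x\Phi_i^W(x;\theta))=\frac{\partial}{\partial\theta_i}p(x;\theta)$ together with the mean-zero normalization $\mathbb{E}_{p_\theta}\Phi_i^W=0$. The first step is simply to apply the Leibniz rule to the weighted divergence on the left-hand side, writing
\begin{equation*}
\nabla_x\cdot\big(p\,\nabla_x\Phi_i^W\big)=\nabla_x p\cdot\nabla_x\Phi_i^W+p\,\Delta_x\Phi_i^W .
\end{equation*}
Substituting this into the defining equation gives $-\nabla_x p\cdot\nabla_x\Phi_i^W-p\,\Delta_x\Phi_i^W=\frac{\partial}{\partial\theta_i}p$.

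The second step is to divide through by $p(x;\theta)$, which is permissible wherever the density is strictly positive. Here I would invoke the two elementary identities $\frac{\nabla_x p}{p}=\nabla_x\log p$ and $\frac{1}{p}\frac{\partial}{\partial\theta_i}p=\frac{\partial}{\partial\theta_i}\log p$ to recast both the spatial gradient term and the source term in logarithmic form, obtaining $-\nabla_x\log p\cdot\nabla_x\Phi_i^W-\Delta_x\Phi_i^W=\frac{\partial}{\partial\theta_i}\log p$. Multiplying by $-1$ and rearranging then reproduces exactly the asserted identity \eqref{ws}. Note that the normalization condition plays no role in the pointwise equation; it only pins down the additive constant in the equivalence class and may be left aside for this computation.

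I do not expect any genuine analytic obstacle: the content of the statement is entirely the product-rule rewriting of the elliptic operator $\Delta_{p_\theta}=\nabla_x\cdot(p_\theta\nabla_x\,\cdot\,)$ in its non-divergence (drift--Laplacian) form. The only points demanding care are the standing regularity and positivity hypotheses, namely that $p(x;\theta)>0$ on the interior of the support so that the division by $p$ and the logarithmic derivatives are well defined, and that $\Phi_i^W$ is smooth enough in $x$ for $\Delta_x\Phi_i^W$ and the product rule to hold pointwise. These are precisely the conditions under which $(-\Delta_{p_\theta})^{-1}$ was introduced earlier. For families whose densities vanish on the boundary of $\mathcal{X}$ (such as the uniform or exponential examples), the identity should be understood on $\{p>0\}$ and the interchange of $\partial_{\theta_i}$ with $\nabla_x$, $\Delta_x$ justified, but this introduces no new idea beyond the Leibniz expansion above.
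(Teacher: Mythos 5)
Your proposal is correct and follows essentially the same route as the paper's proof: expand the weighted divergence $\nabla_x\cdot(p\,\nabla_x\Phi_i^W)$ by the product rule, substitute into the defining equation $-\nabla_x\cdot(p\,\nabla_x\Phi_i^W)=\partial_{\theta_i}p$, divide by $p$, and invoke the logarithmic-derivative identities. Your added remarks on positivity of $p$ and smoothness of $\Phi_i^W$ are sensible caveats the paper leaves implicit, but they do not change the argument.
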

\begin{proof}
Notice the fact that 
\begin{equation*}
    \begin{aligned}
        \Big(\Delta_{p_\theta}\Big)\Phi_i^W(x;\theta) = & \  \nabla_x\cdot(p(x;\theta)\nabla_x\Phi_i^W(x;\theta))      \\
        = & \ \nabla_x p(x;\theta)\cdot \nabla_x\Phi_i^W(x;\theta)+p(x;\theta)\Delta_x\Phi_i^W(x;\theta).
    \end{aligned}
\end{equation*}
Then the Wasserstein score function $\Phi^W_i(x)$ satisfies 
\begin{equation*}
\nabla_xp(x;\theta)\cdot \nabla_x\Phi_i^W(x;\theta)+p(x;\theta)\Delta_x\Phi_i^W(x;\theta)=-\frac{\partial}{\partial \theta_i}p(x;\theta).
\end{equation*}
Divide the above equation on both sides by $p(x;\theta)$:
\begin{equation*}
\frac{1}{p(x;\theta)}\Big\{\nabla_xp(x;\theta)\cdot \nabla_x\Phi_i^W(x;\theta)+p(x;\theta)\Delta_x\Phi_i^W(x;\theta)\Big\}=-\frac{1}{p(x;\theta)}\frac{\partial}{\partial \theta_i}p(x;\theta),
\end{equation*}
i.e. 
\begin{equation*}
\frac{1}{p(x;\theta)}\nabla_xp(x;\theta)\cdot \nabla_x\Phi^W(x;\theta)+\Delta_x\Phi_i^W(x;\theta)=-\frac{1}{p(x;\theta)}\frac{\partial}{\partial \theta_i}p(x;\theta).
\end{equation*}
Since $\frac{1}{p(x;\theta)}\nabla_xp(x;\theta)=\nabla_x\log p(x;\theta)$ and $\frac{1}{p(x;\theta)}\frac{\partial}{\partial \theta_i}p(x;\theta)=\frac{\partial}{\partial \theta_i}\log p(x;\theta)$, 
we prove the property \eqref{ws}.
\end{proof}
We then demonstrate that Wasserstein score functions and information matrices can also be decomposed into a summation of separable functions in independent models.  
\begin{proposition}[Separability]\label{sep}
If $p(x;\theta)$ is an independence model, i.e. 
\begin{equation*}
    p(x; \theta) = \Pi_{k = 1}^n p_k(x_k;\theta), \quad x_k \in \cx_k,\quad x=(x_1,\cdots, x_n).
\end{equation*}
Then there exists a set of functions $\Phi_i^{W,k}\colon \mathcal{X}_k \times\Theta \rightarrow \mathbb{R}, i = 1,2, \cdots, \dim \Theta, k = 1, 2, \cdots, n$, such that 
\begin{equation}\label{linear}
\Phi_i^W(x;\theta)=\sum_{k = 1}^n\Phi_i^{W,k}(x_k;\theta).
\end{equation}
In addition, the WIM is separable:
\begin{equation*}
G_W(\theta) =\sum_{k = 1}^nG_{W}^k(\theta),
\end{equation*}
where $\lp G_{W}^k(\theta) \rpz_{ij} =\mathbb{E}_{p_k} \lp \nabla_{x}\Phi_i^{W,k}(x;\theta), \nabla_{x}\Phi_j^{W,k}(x;\theta) \rpz $.
\end{proposition}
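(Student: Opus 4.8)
The plan is to construct the claimed functions $\Phi_i^{W,k}$ explicitly as the Wasserstein score functions of the one-factor marginal families, show that their sum satisfies the Poisson equation and the normalization condition that \emph{define} $\Phi_i^W$, and then invoke uniqueness to conclude \eqref{linear}; the matrix identity will then drop out by substitution. Concretely, for each factor $k$ I would let $\Phi_i^{W,k}(x_k;\theta)$ denote the solution of the marginal problem
\[
-\nabla_{x_k}\cdot\big(p_k(x_k;\theta)\nabla_{x_k}\Phi_i^{W,k}(x_k;\theta)\big)=\frac{\partial}{\partial\theta_i}p_k(x_k;\theta),\qquad \mathbb{E}_{p_k}\Phi_i^{W,k}=0,
\]
and set $\Psi_i:=\sum_{k=1}^n\Phi_i^{W,k}(x_k;\theta)$ as the candidate for $\Phi_i^W$.

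Next I would check the two defining conditions for $\Psi_i$. The normalization is immediate: under the product density the $x_k$-marginal is exactly $p_k$, so $\mathbb{E}_{p_\theta}\Psi_i=\sum_k\mathbb{E}_{p_k}\Phi_i^{W,k}=0$. For the Poisson equation I would use that $\Phi_i^{W,k}$ depends only on $x_k$, so $\nabla_x\Psi_i$ is block structured with $k$-th block $\nabla_{x_k}\Phi_i^{W,k}$. Writing $p=p_k(x_k)\,p_{\hat k}$ with $p_{\hat k}=\prod_{j\neq k}p_j(x_j)$ independent of $x_k$, the divergence splits as
\[
\nabla_x\cdot(p\nabla_x\Psi_i)=\sum_k p_{\hat k}\,\nabla_{x_k}\cdot\big(p_k\nabla_{x_k}\Phi_i^{W,k}\big)=-\sum_k p_{\hat k}\frac{\partial}{\partial\theta_i}p_k,
\]
while the product rule gives $\frac{\partial}{\partial\theta_i}p=\sum_k p_{\hat k}\frac{\partial}{\partial\theta_i}p_k$. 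Hence $-\nabla_x\cdot(p\nabla_x\Psi_i)=\frac{\partial}{\partial\theta_i}p$, as required.

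With both conditions verified, I would invoke uniqueness of the solution of $-\Delta_{p_\theta}\Phi=\frac{\partial}{\partial\theta_i}p$ modulo additive constants, i.e. the invertibility of $\Delta_{p_\theta}$ between $C(\mathcal{X})/\mathbb{R}$ and $C_0(\mathcal{X})$ recorded earlier in the excerpt, together with the normalization fixing the representative. This identifies $\Phi_i^W$ with $\Psi_i$ and proves \eqref{linear}. The separability of the matrix then follows by substitution: because $\nabla_{x_k}\Phi_i^{W,k}$ is supported in its own coordinate block, the pointwise inner product carries no cross terms, so $\nabla_x\Phi_i^W\cdot\nabla_x\Phi_j^W=\sum_k\nabla_{x_k}\Phi_i^{W,k}\cdot\nabla_{x_k}\Phi_j^{W,k}$, and since the $k$-th summand depends only on $x_k$ its $p_\theta$-expectation collapses to a $p_k$-expectation, yielding $G_W(\theta)_{ij}=\sum_k\big(G_W^k(\theta)\big)_{ij}$.

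The main obstacle is not analytic depth but rigor in two bookkeeping steps: justifying that $\Delta_{p_\theta}$ acts diagonally across factors on separated functions (which rests on $\Phi_i^{W,k}$ having genuinely vanishing derivatives in the other blocks and on $p_{\hat k}$ factoring out of $\nabla_{x_k}\cdot$), and invoking the uniqueness of the Poisson solution in the appropriate function space so that \emph{solving the equation plus normalizing} truly identifies the score function rather than merely producing one admissible candidate. Everything else reduces to the product rule for $\frac{\partial}{\partial\theta_i}p$ and the factorization of expectations over independent coordinates.
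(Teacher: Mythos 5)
Your proof is correct, and it takes a genuinely different logical route from the paper's. The paper argues by an ansatz: it \emph{assumes} the score function can be written in the separable form \eqref{linear}, substitutes this into the log-form Poisson equation of Proposition \ref{prop4} (where $\nabla_x\log p=\sum_k\nabla_{x_k}\log p_k$ makes the equation split), and then appeals to ``the separable method for solving the Poisson equation'' to conclude that each summand satisfies the marginal equation. You instead go in the constructive direction: you \emph{define} $\Phi_i^{W,k}$ as the solution of the marginal problem, verify that the sum $\Psi_i$ satisfies the divergence-form Poisson equation $-\nabla_x\cdot(p\nabla_x\Psi_i)=\partial_{\theta_i}p$ (via the product rule and the factorization $p=p_k\,p_{\hat k}$) together with the normalization, and then invoke the uniqueness of the solution modulo constants to identify $\Psi_i$ with $\Phi_i^W$. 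Your route is the more complete one for what the proposition actually asserts --- namely the \emph{existence} of such a decomposition --- since the paper's ansatz argument only shows what the components must satisfy \emph{if} a separable solution exists, whereas your argument produces the decomposition and certifies it is the score function. The price is that you must explicitly invoke the invertibility of $\Delta_{p_\theta}$ between $C(\mathcal{X})/\mathbb{R}$ and $C_0(\mathcal{X})$, which the paper records but does not use at this point. The treatment of the matrix identity is essentially identical in both arguments: the block structure of the gradients kills the cross terms and the expectation of a function of $x_k$ alone collapses to a $p_k$-expectation.
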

\begin{proof}
The proof follows from proposition \ref{prop4}. Suppose one can write the solution in form of \eqref{linear}, then equation \eqref{ws} forms 
\begin{equation*}
\sum_{k=1}^n \Big\{\nabla_{x_k}\log p_k(x_k;\theta_k)\nabla_{x_k}\Phi_i^{W,k}(x_k;\theta)+\Delta_{x_k}\Phi_i^{W,k}(x_k;\theta)-\frac{\partial}{\partial \theta_i}\log p_k(x_k;\theta)\Big\}=0.
\end{equation*}
From the separable method for solving the Poisson equation, we derive
\begin{equation*}
\nabla_{x_k}\log p_k(x_k;\theta_k)\nabla_{x_k}\Phi_i^{W,k}(x_k;\theta)+\Delta_{x_k}\Phi_i^{W,k}(x_k;\theta_k)-\frac{\partial}{\partial \theta_i}\log p_k(x_k;\theta)=0.
\end{equation*}
We finish the first part of the proof. In addition, 
\begin{equation*}
\begin{split}
\lp G_{W}(\theta) \rpz_{ij}=& \ \mathbb{E}_{p_\theta} \lp \nabla_x\Phi_i^W(x;\theta), \nabla_x\Phi_j^W(x;\theta) \rpz\\
=& \ \mathbb{E}_{p_\theta} \lp \sum_{k} (\nabla_{x}\Phi_i^{W,k}(x;\theta), \nabla_{x}\Phi_j^{W,k}(x;\theta) \rpz\\
=& \ \sum_{k}\mathbb{E}_{p_k} \lp \nabla_{x}\Phi_i^{W,k}(x;\theta), \nabla_{x}\Phi_j^{W,k}(x;\theta) \rpz\\
=& \ \sum_{k}\lp G_{W}^k(\theta) \rpz_{ij}.
\end{split}
\end{equation*}
\end{proof}

We next list some analytical solutions for WIMs and score functions in 1-d case. See related studies in \cite{villani2003topics} (c.f. Ch 2.2).
\begin{proposition}[One-dimensional sample space]
If $\mathcal{X}\subset \mathbb{R}^1$, Wasserstein score functions satisfy
\begin{equation}\label{1d}
\Phi_i^W(x;\theta)=-\int_{\cx \cap \lp \infty , x \rb} \frac{1}{p(z;\theta)}\frac{\partial}{\partial\theta_i}F(z;\theta)dz, 
\end{equation}
 where $F(x;\theta)=\int_{\cx \cap \lp \infty , x \rb} p(y;\theta)dy$ is the cumulative distribution function. And the WIM satisfies 
 \begin{equation*}
 G_W(\theta)_{ij}=\mathbb{E}_{p_\theta} \lp \frac{\frac{\partial}{\partial\theta_i}F(x;\theta)\frac{\partial}{\partial\theta_j}F(x;\theta)}{p(x;\theta)^2} \rpz.
 \end{equation*}
 \end{proposition}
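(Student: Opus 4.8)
The plan is to specialize the defining equation of the Wasserstein score function to the one-dimensional setting, where it reduces to a second-order linear ODE in $x$ that can be solved by two successive integrations. Recall that $\Phi_i^W(x;\theta)$ is defined by
\begin{equation*}
    -\frac{\partial}{\partial x}\Big(p(x;\theta)\,\frac{\partial}{\partial x}\Phi_i^W(x;\theta)\Big)=\frac{\partial}{\partial \theta_i}p(x;\theta),
\end{equation*}
since in dimension one both $\nabla_x$ and $\nabla_x\cdot$ coincide with $\frac{\partial}{\partial x}$. First I would integrate this identity in $x$ from the left endpoint $a=\inf\mathcal{X}$ up to $x$. On the right-hand side I interchange the $x$-integration with the $\theta_i$-derivative to obtain $\frac{\partial}{\partial \theta_i}\int_a^x p(z;\theta)\,dz=\frac{\partial}{\partial \theta_i}F(x;\theta)$, while the left-hand side telescopes to the flux $-p(x;\theta)\frac{\partial}{\partial x}\Phi_i^W(x;\theta)$ evaluated between the two endpoints.

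The crucial observation is that the boundary contribution at $a$ vanishes. Indeed $F(a;\theta)=0$ for every $\theta$, so that $\frac{\partial}{\partial \theta_i}F(a;\theta)=0$, and the flux $p(a;\theta)\frac{\partial}{\partial x}\Phi_i^W(a;\theta)$ is zero by the natural no-flux boundary condition for the weighted Poisson operator (equivalently, by the decay of $p$ when $a=-\infty$). Hence I obtain the first-order relation
\begin{equation*}
    \frac{\partial}{\partial x}\Phi_i^W(x;\theta)=-\frac{1}{p(x;\theta)}\,\frac{\partial}{\partial \theta_i}F(x;\theta).
\end{equation*}
Integrating once more from $a$ to $x$ yields the claimed formula \eqref{1d}; the additive constant of integration is immaterial since score functions are elements of $C(\mathcal{X})/\mathbb{R}$, and it is pinned down by the normalization $\mathbb{E}_{p_\theta}\Phi_i^W=0$.

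For the information matrix I would substitute the first-order relation directly into the one-dimensional form of the definition, $G_W(\theta)_{ij}=\mathbb{E}_{p_\theta}\big[\frac{\partial}{\partial x}\Phi_i^W\,\frac{\partial}{\partial x}\Phi_j^W\big]=\int_{\mathcal{X}}\frac{\partial}{\partial x}\Phi_i^W\,\frac{\partial}{\partial x}\Phi_j^W\,p\,dx$. The two factors of $1/p$ combine with the weight $p$ to leave $\int_{\mathcal{X}} \frac{1}{p}\frac{\partial}{\partial \theta_i}F\,\frac{\partial}{\partial \theta_j}F\,dx$, which is exactly $\mathbb{E}_{p_\theta}\big(\frac{\partial_{\theta_i}F\,\partial_{\theta_j}F}{p^2}\big)$. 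The main obstacle is the rigorous justification of the vanishing boundary flux together with the interchange of differentiation and integration: one must assume enough regularity and decay of $p(\cdot\,;\theta)$, and integrability of $\frac{1}{p}\frac{\partial}{\partial \theta_i}F$, for the explicit solution to be meaningful. This is precisely the PDE-theoretic hypothesis under which $(-\Delta_{p_\theta})^{-1}$ was declared an inverse between $C_0(\mathcal{X})$ and $C(\mathcal{X})/\mathbb{R}$ earlier in the excerpt, so no new assumptions are needed beyond those already in force.
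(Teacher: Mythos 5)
Your proposal is correct and follows essentially the same route as the paper's own argument: integrate the one-dimensional divergence-form equation $-\frac{d}{dx}\big(p\,\frac{d}{dx}\Phi_i^W\big)=\frac{\partial}{\partial\theta_i}p$ once to obtain $p\,\frac{d}{dx}\Phi_i^W=-\frac{\partial}{\partial\theta_i}F$, integrate again, and substitute into $G_W(\theta)_{ij}=\int_{\mathcal{X}}\frac{d}{dx}\Phi_i^W\,\frac{d}{dx}\Phi_j^W\,p\,dx$. Your only deviation is a welcome one --- you explicitly justify the vanishing of the boundary flux at the left endpoint, which the paper's derivation leaves implicit.
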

If the dimension of sample space $\mathcal{X}$ is larger than $1$, exact solutions of Wasserstein score functions and information matrices depend on solutions of Poisson equation \eqref{ws}. We leave the derivation of general formulas for interested readers.

\subsection{Analytic examples}
We present several analytical examples of the WIM in one-dimensional sample space. The derivation is given in section \ref{supp-2}. 

\begin{example}[Gaussian distribution]
 Consider Gaussian distribution families with mean value $\mu$ and standard variance $\sigma > 0$, i.e.
$p(x;\mu,\sigma)=\frac{1}{\sqrt{2\pi} \sigma}e^{-\frac{1}{2\sigma^2}(x-\mu)^2}$.
Wasserstein score functions satisfy
\begin{equation*}
\begin{split}
\Phi^W_\mu(x;\mu,\sigma) = x - \mu ,\quad\Phi^W_\sigma(x;\mu,\sigma)=\frac{(x-\mu)^2 - \sigma^2}{2\sigma}.
\end{split}
\end{equation*}
And the WIM satisfies 
\begin{equation*}
G_W(\mu,\sigma)=\begin{pmatrix}
1& 0 \\
0 &  1
\end{pmatrix}.
\end{equation*}
\end{example}
\begin{example}[Exponential distribution]
Consider exponential distribution families $Exp(m,\lambda)$, i.e.
$p(x;m,\lambda)=
\textbf{1}_{[m, \infty)}(x)\lambda e^{-\lambda (x - m)}$, where the function $\textbf{1}_C$ is the indicator function for a set $C \subset \mbR$. Wasserstein score functions satisfy
\begin{equation*}
\Phi^W_\lambda(x;m,\lambda)=\frac{(x - m)^2 - \frac{2}{\lambda^2}}{2\lambda}, \qquad \Phi^W_m(x;m,\lambda)= x - m -\frac{1}{\lambda}.
\end{equation*}
And the WIM satisfies 
\begin{equation*}
G_W(m,\lambda)=\begin{pmatrix}
    1 & \frac{1}{\lambda^2}     \\
    \frac{1}{\lambda^2} & \frac{2}{\lambda^4}
\end{pmatrix}.
\end{equation*}
\end{example}

\begin{example}[Laplacian distribution]
Consider Laplacian distribution families $La(m,\lambda)$, i.e. $p(x;m,\lambda) = \frac{\lambda}{2}e^{-\lambda|x-m|}$. Wasserstein score functions satisfy 
\begin{equation*}
\Phi^W_\lambda(x;m,\lambda)=\frac{(x - m)^2 - \frac{2}{\lambda^2}}{2\lambda}, \qquad \Phi^W_m(x;m,\lambda)= x - m.
\end{equation*}
Notice that score functions for exponential families and Laplacian families have similar formulas. And the WIM satisfies 
\begin{equation*}
G_W(m,\lambda)=\begin{pmatrix}
    1 & 0     \\
    0 & \frac{2}{\lambda^4}
\end{pmatrix}.
\end{equation*}
We will show below that the Laplacian family has an advantage that densities within this family have the same support. Thus this model is convenient for us to compare the WIM with the Fisher information matrix. See details in section \ref{ex-func}.
\end{example}

\begin{example}[Uniform distribution]
Consider uniform distribution families within interval $[a,b]$, i.e. $p(x;a,b)= 
\frac{1}{b-a} \textbf{1}_{[a, b]}(x)$. Wasserstein score functions satisfy
\begin{equation*}
    \begin{aligned}
        \Phi^W_a(x;a,b) & = \frac{x(a+b-x)}{(b-a)} - \frac{b^2 + a^2 + 4ab }{6},  \\
        \Phi^W_b(x;a,b) & = \frac{b(x-2a)}{(b-a)} - \frac{b^2 - 3ab}{2} .
    \end{aligned}
\end{equation*}
And the WIM satisfies 
\begin{equation*}
G_W(a,b) = \frac{1}{3} \begin{pmatrix}
1 & \frac{1}{2}\\
\frac{1}{2} &  1
\end{pmatrix}.
\end{equation*}
\end{example}

\begin{example}[Wigner semicircle distribution]
Consider semicircle distribution families, i.e. $p(x;m,R)=\textbf{1}_{[-R + m, R + m]}(x)
\frac{2}{\pi R^2}\sqrt{R^2 - \lp x - m \rpz^2}$. Wasserstein score functions satisfy
\begin{equation*}
\Phi^W_R(x;m,R)=\frac{1}{R}(\frac{\lp x - m \rpz^2}{2}-\frac{R^2}{8}), \qquad \Phi^W_p(x;m,R)= x - m.
\end{equation*}
And the WIM satisfies 
\begin{equation*}
G_W(m,R)=\begin{pmatrix}
    1 & 0     \\
    0 & \frac{1}{4}
\end{pmatrix}.
\end{equation*}
\end{example}

\begin{example}[Independent model]
Consider an independent model as follow: suppose $X\sim p_1(x;\theta)$, and $Y\sim p_2(x;\theta)$, and $(X,Y)\sim p(x,y;\theta)$, then
\begin{equation*}
p(x,y;\theta)=p_1(x;\theta)p_2(y;\theta).
\end{equation*}
Denote Wasserstein score functions (resp. WIM) for statistical model $X\sim p_1(x;\theta), Y\sim p_2(x;\theta)$ as $\Phi^W_1(x;\theta), \Phi^W_2(x;\theta)$($G_W^1(x;\theta), G_W^2(x;\theta)$) respectively. Then, Wasserstein score functions for this model $(X,Y)\sim p(x,y;\theta)$ satisfy
\begin{equation*}
\Phi^W(x,y;\theta)=\Phi^W_1(x;\theta)+\Phi^W_2(y;\theta),
\end{equation*}
because of the additivity of expectation $\EE_{p_\theta}\Phi^W(x,y;\theta)=\EE_{p_\theta}\Phi^W_1(x;\theta) + \EE_{p_\theta}\Phi^W_2(y;\theta) = 0$. And the WIM satisfies 
\begin{equation*}
G_W(\theta)=G_W^1(\theta)+G_W^2(\theta).
\end{equation*}
The proof follows directly from proposition \ref{sep}.
\end{example}
\par
In above discussions, all examples are based on location-scale families, which will be derived carefully in section \ref{location}. 
We show that location-scale families are totally geodesic submanifolds in Wasserstein geometry.
\subsection{WIM in generative models}
\par
In this section, we study the WIM for generative models using ReLU function, which is given by
\begin{equation*}
    \sigma \lp x \rpz = \lbb \begin{aligned}
        & 0, \quad x \leq 0,    \\
        & x, \quad x > 0.
    \end{aligned}\right.
\end{equation*}
Generative models are powerful in machine learning \cite{10.1007/978-3-030-26980-7_74}. It applies the reparameterization trick (known as push-forward relation) to conduct efficient sampling. In practice, one often applies the ReLU as a push-forward function \eqref{push-forward}. For this reason, we call this kind of models ReLU push-forward family. The push-forward measure $f_*p$ is defined as
\begin{equation}\label{push-forward}
    \int_{A}f_* pdx = \int_{f^{-1}(A)}pdx, \quad \forall  A \subset \mbR.
\end{equation}

\par
To keep derivations simple, we consider one-dimensional cases with a given distribution $p_0\lp x \rpz$, $x\in\mathbb{R}$. And its cumulative distribution function is denoted by $F_0\lp x \rpz$.
\begin{example}[ReLU push-forward family]\label{ReLU-ex}
We use a family of ReLU functions $f_{\theta}$ parameterized by $\theta$ to generate a push-forward family
\bequn
	\begin{aligned}
		p : \Theta \simeq \RR \rightarrow & \cp \lp \RR \rpz: \quad \theta \mapsto p_{\theta},		\\
		p_{\theta}\lp x \rpz = p\lp x; \theta \rpz= \lp f_{\theta*} p_0 \rpz \lp x \rpz, \quad & f_{\theta} \lp x \rpz= \ \sigma\lp x - \theta \rpz= \lbb
		\begin{aligned}
		& 0, \qquad \qquad x \leq \theta,			\\
		& x - \theta, \ \qquad x > \theta.
		\end{aligned}\right.
	\end{aligned}
\eequn
The WIM of $p_{\theta}$ satisfies
\bequ\label{ReLU-metric1}
	G_W \lp \theta \rpz= 1 - F_0\lp \theta \rpz.
\eequ
\par
We can also consider another family of ReLU maps to push forward the source distribution. This family is given by
\bequn
	\begin{aligned}
		p : \Theta \simeq \mbR \rightarrow & \mcP\lp \mbR \rpz: \quad \theta \mapsto p_{\theta}		\\
		p_{\theta}\lp x \rpz = p\lp x; \theta \rpz= \lp h_{\theta*} p_0 \rpz \lp x \rpz, \qquad & h_{\theta}\lp x \rpz= \ \sigma\lp x - \theta \rpz+ \theta = \lbb
		\begin{aligned}
		& \theta, \qquad x \leq \theta,			\\
		& x, \qquad x > \theta.
		\end{aligned}\right.
	\end{aligned}
\eequn
The WIM of $p_{\theta}$ satisfies
\bequ\label{ReLU-metric2}
	G_W \lp \theta \rpz= F_0\lp \theta \rpz.
\eequ 
A figure illustrating these two families is provided below.
\begin{figure}[H]
  \centering
  \centerline{\includegraphics[width=0.52\linewidth]{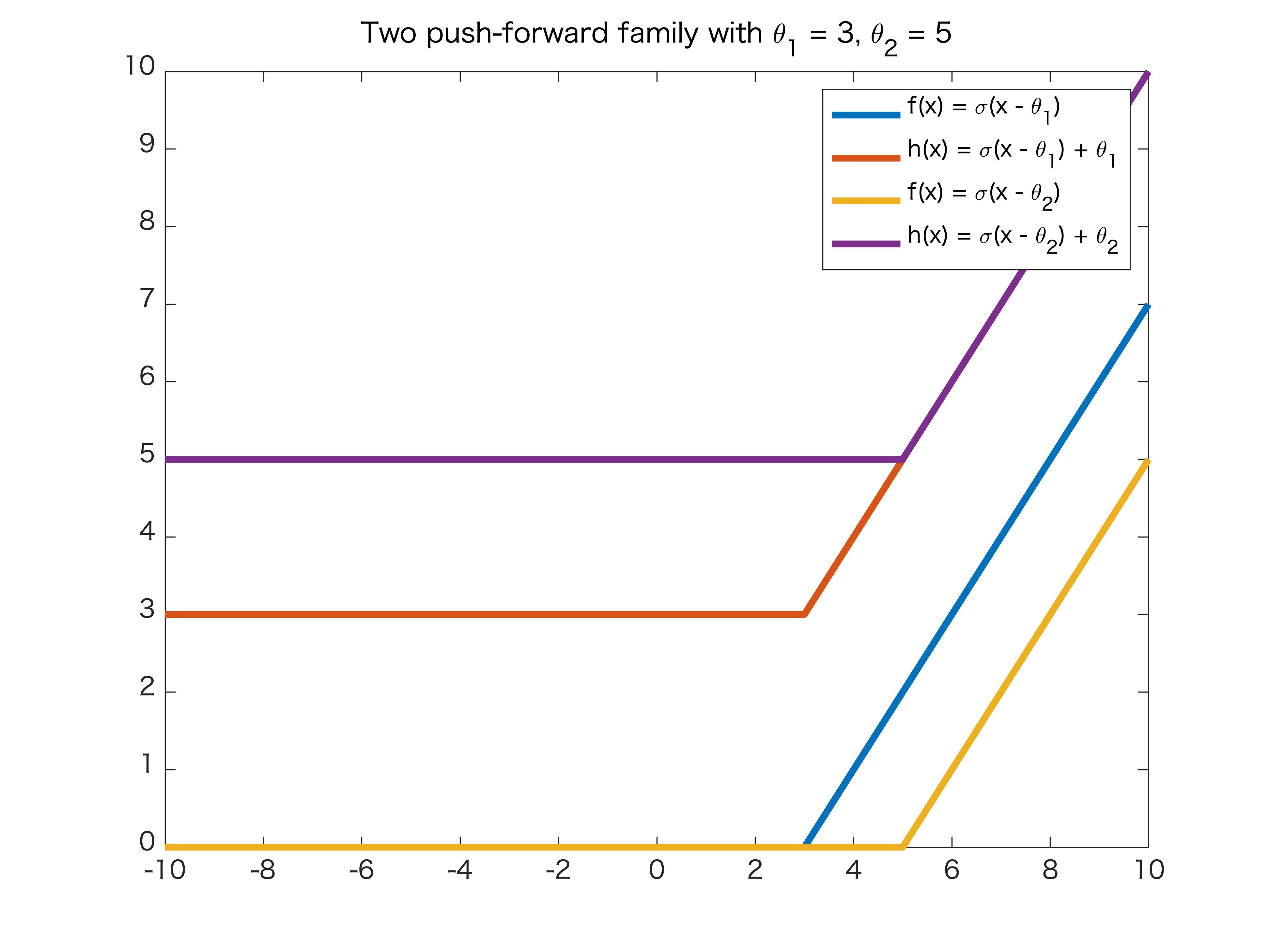}}
  \caption{This figure plots two examples of push-forward families we described above with parameters chosen as $\theta_1 = 3, \theta_2 = 5$.}
\end{figure}
\begin{remark}
    To calculate the WIMs of this model, we cannot use previous approaches of score functions, since it is not smooth enough. Instead, we utilize the idea stated in remark \ref{motivation}. Namely, we use the relation \eqref{motivation-equ} between Wasserstein distance and the WIM to compute the latter.
\end{remark}
\begin{proof}

Consider the following two push-forward distributions given by
\bequn
	\begin{aligned}
	    \lp f_{\theta + \Delta \theta *} p_0 \rpz \lp x \rpz& = F_0\lp \theta + \Delta \theta \rpz\delta_0 + p_0\lp \cdot + \theta + \Delta \theta \rpz_{\lb 0, \infty \rpz},           \\
		\lp f_{\theta*} p_0 \rpz \lp x \rpz& = F_0\lp \theta \rpz\delta_0 + p_0\lp \cdot + \theta \rpz_{\lb 0, \infty \rpz},
	\end{aligned}
\eequn
where $\delta_0$ refers to the Dirac measure concentrating at point $0$. And $p_0\lp \cdot + \theta \rpz_{\lb 0, \infty \rpz}$ represents the measure $\wtd p \lp x \rpz= p_0 \lp x + \theta \rpz$ restricting to the interval $\lb 0, \infty \rpz$. Using monotonicity of transportation plan in 1-d, we conclude that its restriction on $\lp 0, \infty \rpz$ transports measure on $x$ to $x + \Delta \theta$. And it remains to transport the Dirac measure centered at $0$ to the remained place. The transportation cost is given by
\bequ\label{ReLU-1}
	\begin{aligned}
		W_2^2\lp f_{\theta*} p_0, f_{\theta + \Delta \theta*} p_0 \rpz= & \  \int_0^{\infty} p_0\lp x + \theta + \Delta\theta \rpz \lp \Delta\theta \rpz^2 dx + \int_0^{\Delta\theta} x^2 p_0\lp x + \theta \rpz dx 			\\
		= & \ \lp \Delta\theta \rpz^2\lp 1 - F_0 \lp \theta + \Delta \theta \rpz \rpz + O \lp \lp \Delta \theta \rpz^3 \rpz,
	\end{aligned}
\eequ
where the third equality holds by
\bequn
	\int_{0}^{\Delta \theta} p_0\lp x + \theta \rpz dx = O \lp \Delta \theta \rpz.
\eequn
Notice in formula \eqref{ReLU-1}, we decompose the transportation cost into two parts: the first one is concerned with the cost on the right part of $0$, while the second one considers transporting Dirac measure at $0$ to the remained part. Since the WIM is an infinitesimal approximation of the Wasserstein distance, i.e. equation \eqref{motivation-equ}. The conclusion \eqref{ReLU-metric1} follows.
\par
For the other family, derivations follow the same method as before. Specifically, we have
\bequn
	\begin{aligned}
		W_2^2\lp h_{\theta*} p_0, h_{\theta + \Delta \theta*} p_0\rpz
		= & \ \int_{0}^{\Delta \theta} x^2 p_0\lp x + \theta \rpz dx + \lp \Delta \theta \rpz^2 F_0\lp \theta \rpz		\\
		= & \lp \Delta \theta \rpz^2 F_0\lp \theta \rpz+ O \lp \lp \Delta \theta \rpz^3 \rpz,
	\end{aligned}
\eequn
where we again decompose the transportation cost into two parts. The first one is absolutely continuous w.r.t the Lebesgue measure, while the second one contains a Dirac measure.
\end{proof}
\par
Here we notice that density functions in ReLU push-forward family can be singular. 
Thus the Fisher information matrix, which depends on an explicit formula of density functions, namely
\begin{equation*}
    G_F(\theta)_{ij} =
\int_{\mathcal{X}}\frac{\partial}{\partial \theta_i}\log p(x;\theta)\frac{\partial}{\partial \theta_j}\log p(x;\theta) p(x;\theta)dx
\end{equation*}
fails to exist in these models. On the contrary, as we have shown in the above example, the WIM still exists. This property shows that the WIM can provide statistical studies for generative models, while the Fisher information matrix in classical statistics can not.
\end{example}

\section{Wasserstein estimation}\label{sec-WCR}
\par
In this section, we define the Wasserstein covariance and establish the Wasserstein-Cramer-Rao bound. Based on these concepts, we introduce a notion of efficiency in Wasserstein statistics. Several examples based on the previous section are provided.
\subsection{Estimation and efficiency}
\par
Following the spirit under which we introduce information matrices in section \ref{sec2}, we generalize the definition of covariance matrix for a given metric tensor $g$ on probability space.

Denote $\langle f, h\rangle_{g}$ the inner product of cotangent vectors $f,h$ in the metric $g$.
\begin{equation*}
\langle f, h\rangle_{g}= \langle f, g(p)^{-1} h\rangle. 
\end{equation*}
\begin{definition}[Information covariance matrix]
Given a statistical model $\Theta$ with metric $g$, and statistics $T$, $\wtd T$ which are of dimension $m,n$ respectively, the information covariance matrix of $T$,$\wtd T$ associated to metric $g$ is defined as:
\begin{equation*}
\mathrm{Cov}^g_\theta[T, \wtd T]_{ij} = \la T_i,\wtd T_j \ra_g ,
\end{equation*}
where $T_i$, $\wtd T_j$ are random variables as function of $x$. 
Denote the information variance as: 
\begin{equation*}
\mathrm{Var}^g_\theta[T] =  \la T, T \ra_g .
\end{equation*}
\end{definition}
\begin{remark}
Here the inner product $\la T_i,\wtd T_j \ra_g$ is obtained by viewing the statistics as cotangent vectors on density manifold $\cp(\cx)$. 
\end{remark}
\begin{example}[Fisher covariance]
\par
Given two statistics $T_1,T_2$, we view them as cotangent vectors in space $C(\cx)/\RR$. Hence their Fisher inner product is defined as
\begin{equation*}
    \la T_1, T_2 \ra_{g_F} = \int_{\cx} \lp T_1 - \EE_{p_\theta}\lb T_1 \rb \rpz\lp T_2 - \EE_{p_\theta}\lb T_2 \rb \rpz p_\theta dx.
\end{equation*}
Here choosing the function $T_1 - \EE_{p_\theta}\lb T_1 \rb$ as the representative of $[T_1]$ is consistent with the normalization requirement \eqref{normalization}. Thus Fisher covariance (resp. variance) reduces to the original definition of the covariance (resp. variance) in probability theory.
\par
And the classical Cramer-Rao bound is given by
\begin{equation*}
\mathrm{Cov}^F_\theta[T(x)] \succeq \nabla_\theta \mathbb{E}_{p_\theta} [T(x)]^{\ts}G_F(\theta)^{-1}\nabla_\theta \mathbb{E}_{p_\theta} [T(x)],
\end{equation*}
where $G_F(\theta)$ is the Fisher information matrix.
In 1-d cases, the above forms 
\begin{equation*}
\mathrm{Var}_\theta[T(x)] \geq 
\frac{\lp \nabla_\theta \mathbb{E}_{p_\theta}T\lp x \rpz \rpz^2}{G_F(\theta)}.
\end{equation*}
\end{example}
\par
We next focus on the Wasserstein covariance operator.
\begin{definition}[Wasserstein covariance]
Given a statistical model $\Theta$, denote the Wasserstein covariance as follows:
\begin{equation*}
\mathrm{Cov}^W_\theta[T_1, T_2]=\mathbb{E}_{p_\theta} \lp \nabla_x T_1(x), \nabla_x T_2(x)^{\ts} \rpz,
\end{equation*}
where $T_1$, $T_2$ are random variables as functions of $x$ and the expectation is taken w.r.t. $x \sim p_\theta$. 
Denote the Wasserstein variance: 
\begin{equation*}
\mathrm{Var}^W_\theta[T] = \mathbb{E}_{p_\theta} \lp \nabla_x T(x), \nabla_x T(x)^{\ts} \rpz.
\end{equation*}
\end{definition}
\begin{theorem}[Wasserstein-Cramer-Rao inequality]\label{WCR}
Given any set of statistics $T = \left( T_1,...,T_m\right) \colon $ \\ $\mathcal{X}\rightarrow \mathbb{R}^m$, where $m$ is the number of the statistics, define two matrices $\mathrm{Cov}^W_\theta[T(x)]$, $ \nabla_\theta \mathbb{E}_{p_\theta} [T(x)]^{\ts}$ as below:
\begin{equation*}
    \mathrm{Cov}^W_\theta[T(x)]_{ij} = \mathrm{Cov}^W_\theta[T_i,T_j], \qquad \nabla_\theta \mathbb{E}_{p_\theta} [T(x)]^{\ts}_{ij} = \frac{\pa}{\pa \theta_j} \mathbb{E}_{p_\theta} [T_i(x)],
\end{equation*}
then
\begin{equation*}
\mathrm{Cov}^W_\theta[T(x)] \succeq \nabla_\theta \mathbb{E}_{p_\theta} [T(x)]^{\ts}G_W(\theta)^{-1}\nabla_\theta \mathbb{E}_{p_\theta} [T(x)],
\end{equation*}
where the notion $\succeq$ refers to that the difference of two matrices is positive semi-definite.
\end{theorem}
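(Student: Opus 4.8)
The plan is to follow the classical Cramer--Rao strategy, with the Wasserstein score functions $\Phi_j^W$ playing the role of the Fisher score and the Wasserstein covariance serving as the governing bilinear form. Everything reduces to one identity that recasts the sensitivity matrix $\nabla_\theta \mathbb{E}_{p_\theta}[T(x)]$ as a cross Wasserstein covariance between the statistics $T$ and the score, after which positive semidefiniteness of the variance finishes the argument.

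\emph{Step 1 (the key identity).} First I would establish that
\begin{equation*}
\frac{\partial}{\partial \theta_j} \mathbb{E}_{p_\theta}[T_i(x)] = \mathrm{Cov}^W_\theta[T_i, \Phi_j^W].
\end{equation*}
Differentiating under the integral sign gives $\frac{\partial}{\partial\theta_j}\int T_i\, p\,dx = \int T_i \frac{\partial}{\partial\theta_j} p\, dx$. Substituting the defining equation of the score function, $\frac{\partial}{\partial\theta_j}p = -\nabla_x\cdot(p\,\nabla_x\Phi_j^W)$, and integrating by parts in $x$ to transfer the divergence onto $T_i$ yields $\int (\nabla_x T_i \cdot \nabla_x\Phi_j^W)\,p\,dx = \mathrm{Cov}^W_\theta[T_i,\Phi_j^W]$. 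Because only $\nabla_x\Phi_j^W$ enters, the additive-constant ambiguity fixed by the normalization $\mathbb{E}_{p_\theta}\Phi_j^W=0$ is immaterial here.

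\emph{Step 2 (assembling the Gram structure).} Directly from the WIM definition, $G_W(\theta)_{ij} = \mathrm{Cov}^W_\theta[\Phi_i^W,\Phi_j^W]$. Writing $A := \mathrm{Cov}^W_\theta[T(x)]$ and $B$ for the $m\times d$ matrix with entries $B_{ij}=\mathrm{Cov}^W_\theta[T_i,\Phi_j^W]=\nabla_\theta\mathbb{E}_{p_\theta}[T(x)]^{\ts}_{ij}$, the claim becomes $A \succeq B\,G_W(\theta)^{-1}B^{\ts}$. To prove this I would invoke nonnegativity of the Wasserstein variance: for arbitrary $a\in\mathbb{R}^m$ and $c\in\mathbb{R}^d$,
\begin{equation*}
0 \leq \mathrm{Var}^W_\theta[a^{\ts}T - c^{\ts}\Phi^W] = a^{\ts}A\,a - 2\,a^{\ts}B\,c + c^{\ts}G_W(\theta)\,c,
\end{equation*}
since $\mathrm{Var}^W_\theta[Z]=\mathbb{E}_{p_\theta}\|\nabla_x Z\|^2\geq 0$. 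Minimizing the right-hand side over $c$ at $c = G_W(\theta)^{-1}B^{\ts}a$ gives $a^{\ts}(A - B\,G_W(\theta)^{-1}B^{\ts})a \geq 0$ for every $a$, which is the asserted matrix inequality. Equivalently, this is the Schur-complement statement for the joint Gram matrix $\left(\begin{smallmatrix} A & B \\ B^{\ts} & G_W\end{smallmatrix}\right)\succeq 0$.

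\emph{Main obstacle.} The genuinely delicate part is analytic rather than algebraic: justifying the differentiation under the integral sign and, above all, the vanishing of the boundary terms in the integration by parts of Step 1. This requires adequate decay and regularity of $p_\theta$, $T$, and $\Phi_j^W$ near $\partial\mathcal{X}$, together with integrability of $T$ against $\frac{\partial}{\partial\theta_j}p$. One also needs $G_W(\theta)$ to be positive definite so that $G_W(\theta)^{-1}$ exists; when it is only positive semidefinite, the optimization in Step 2 should be phrased with a pseudoinverse acting on the range of $B^{\ts}$.
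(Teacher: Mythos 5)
Your proposal is correct, and it rests on exactly the same key lemma as the paper: your Step 1 is precisely the paper's Proposition \ref{Cov} (covariance property), proved the same way — differentiate under the integral, substitute the defining elliptic equation $-\nabla_x\cdot(p\,\nabla_x\Phi^W_j)=\partial_{\theta_j}p$, and integrate by parts. The only real difference is how the second half is packaged. The paper fixes $v\in\mathbb{R}^m$, forms $T_v=v^{\ts}T$, and invokes the orthogonal projection $\P$ of the cotangent Hilbert space $T^*_{p_\theta}\mathcal{P}(\mathcal{X})$ (with the Wasserstein inner product) onto the finite-dimensional span of the score functions $\Phi^W_i$, then expands $\P T_v$ in that basis to identify $\langle \P T_v,\P T_v\rangle_{g_W}$ with $\nabla_\theta\mathbb{E}_{p_\theta}[T_v]^{\ts}G_W^{-1}\nabla_\theta\mathbb{E}_{p_\theta}[T_v]$. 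You instead complete the square: minimize $\mathrm{Var}^W_\theta[a^{\ts}T-c^{\ts}\Phi^W]\geq 0$ over $c$, landing at $c=G_W(\theta)^{-1}B^{\ts}a$. These are the same computation — your optimal $c$ is exactly the coefficient vector of $\P T_a$ in the paper's basis expansion — but your version is more elementary and self-contained, avoiding the need to argue that the span of the scores is a closed subspace admitting an orthogonal projection; the paper's version makes the geometric content (projection of a cotangent vector onto the model's cotangent subspace) explicit. Your closing caveats about differentiating under the integral, boundary terms, and invertibility of $G_W(\theta)$ are also the right ones; the paper assumes these implicitly.
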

\begin{proposition}[Covariance property]\label{Cov}
Given the Wasserstein score function $\Phi^W_i(x;\theta)$ and any smooth statistic $f\colon \mathcal{X}\rightarrow \mathbb{R}$, then 
\begin{equation*}
\frac{\partial}{\partial \theta_i} \mathbb{E}_{p_\theta} f(x) = \mathbb{E}_{p_\theta} (\nabla_x\Phi^W_i(x; \theta), \nabla_{x} f(x)) = \la \Phi^W_i, f \ra_{g_W}.
\end{equation*}
\end{proposition}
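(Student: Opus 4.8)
The plan is to establish both equalities as integration-by-parts identities anchored on the defining Poisson equation for the Wasserstein score function, namely $-\nabla_x\cdot(p_\theta\nabla_x\Phi_i^W)=\frac{\partial}{\partial\theta_i}p_\theta$. For the first equality, I would start from $\frac{\partial}{\partial\theta_i}\mathbb{E}_{p_\theta}f(x)=\frac{\partial}{\partial\theta_i}\int_{\mathcal{X}}f(x)p(x;\theta)\,dx$ and move the derivative inside the integral, obtaining $\frac{\partial}{\partial\theta_i}\mathbb{E}_{p_\theta}f=\int_{\mathcal{X}}f(x)\frac{\partial}{\partial\theta_i}p(x;\theta)\,dx$. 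Substituting the Poisson equation replaces $\frac{\partial}{\partial\theta_i}p$ by $-\nabla_x\cdot(p\nabla_x\Phi_i^W)$, and one integration by parts in $x$ transfers the divergence onto $f$, giving $\int_{\mathcal{X}}(\nabla_x f,\nabla_x\Phi_i^W)\,p\,dx=\mathbb{E}_{p_\theta}(\nabla_x\Phi_i^W,\nabla_x f)$, which is exactly the middle expression.

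For the second equality, I would unfold the definition of the metric inner product $\langle\cdot,\cdot\rangle_{g_W}$ introduced just above. Since $\langle f,h\rangle_{g}=\langle f,g(p)^{-1}h\rangle$ and the Wasserstein metric has inverse $g_W(p)^{-1}=-\Delta_{p_\theta}=-\nabla_x\cdot(p_\theta\nabla_x\,\cdot\,)$, we get $\langle\Phi_i^W,f\rangle_{g_W}=\langle\Phi_i^W,-\Delta_{p_\theta}f\rangle=\int_{\mathcal{X}}\Phi_i^W\,\bigl(-\nabla_x\cdot(p\nabla_x f)\bigr)\,dx$. A second integration by parts, this time moving the divergence onto $\Phi_i^W$, yields $\int_{\mathcal{X}}(\nabla_x\Phi_i^W,\nabla_x f)\,p\,dx$, again the middle expression, so all three quantities coincide. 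In effect the weighted Dirichlet form $\int_{\mathcal{X}}(\nabla_x\Phi_i^W,\nabla_x f)\,p\,dx$ is the common value, and the two outer equalities are each one integration by parts away from it; the symmetry of this form in $\Phi_i^W$ and $f$ is what makes the two computations mirror images.

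The calculus is routine; the only genuine obstacle is justifying the two analytic manipulations. First, interchanging $\frac{\partial}{\partial\theta_i}$ with $\int_{\mathcal{X}}$ requires a dominated-convergence argument, valid under the standing smoothness and integrability hypotheses on $p(x;\theta)$ assumed throughout the paper. Second, each integration by parts produces a boundary contribution (of the form $\int_{\partial\mathcal{X}} f\,p\,\nabla_x\Phi_i^W\cdot n$, and its analogue with $f$ and $\Phi_i^W$ interchanged); these vanish because $p$ decays at the boundary of $\mathcal{X}$ while $f$ and $\Phi_i^W$ grow sub-polynomially, i.e.\ under the same regularity that makes $\Delta_p$ and $\Delta_p^{-1}$ mutually inverse and the score function well defined. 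I would record these as standing assumptions rather than belabor them, since they are precisely the conditions under which $\Phi_i^W$ exists in the first place.
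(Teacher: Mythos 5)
Your proposal is correct and follows essentially the same route as the paper: differentiate under the integral, substitute the defining Poisson equation $-\nabla_x\cdot(p_\theta\nabla_x\Phi_i^W)=\partial_{\theta_i}p_\theta$, and integrate by parts to land on the weighted Dirichlet form. The only difference is cosmetic: the paper leaves the identification with $\langle\Phi_i^W,f\rangle_{g_W}$ implicit in the definition of the metric inner product, whereas you verify it with a second (symmetric) integration by parts, which is a harmless and slightly more complete presentation.
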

\begin{proof}
Notice the fact that  
\begin{equation*}
\begin{split}
\frac{\partial}{\partial \theta_i} \mathbb{E}_{p_\theta} f(x) = & \frac{\partial}{\partial \theta_i}\int_{\mathcal{X}}f(x) p(x;\theta)dx\\
=&\int_{\mathcal{X}}f(x)\frac{\partial}{\partial \theta_i}p(x;\theta)dx\\
=&\int_{\mathcal{X}}f(x) \Big(-\nabla_x\cdot(p(x;\theta)\nabla_x\Phi^W_i(x;\theta))\Big)dx\\
=&\int_{\mathcal{X}}\nabla_xf(x)\cdot \nabla_x\Phi^W_i(x;\theta) p(x;\theta)dx,
\end{split}
\end{equation*}
where the third equality comes from the definition of Wasserstein score functions, while the last equality holds by integration by parts formula in spatial domain. 
\end{proof}
\begin{remark}
This property is in contrast to Fisher score functions
\begin{equation*}
    \begin{aligned}
& \ \frac{\partial}{\partial \theta_i} \mathbb{E}_{p_\theta} f(x) = \mathbb{E}_{p_\theta} \lp f(x) \frac{\partial}{\partial \theta_i}\log p(x;\theta) \rpz \\
= & \  \textrm{Cov}_\theta[( f(x), \frac{\partial}{\partial \theta_i}\log p(x;\theta))] = \la \Phi^F_i, f \ra_{g_F}.
    \end{aligned}
\end{equation*}
This is merely a dual relation between tangent and cotangent space in the density manifold.
\end{remark}
\begin{proof}[Proof of Theorem \ref{WCR}]
\par
By the definition of semi-positive matrix, it suffices to prove that for arbitrary $v \in \RR^m$, we have:
\begin{equation*}
    v^{\ts}\mathrm{Cov}^W_\theta[T(x)]v \geq v^{\ts}\nabla_\theta \mathbb{E}_{p_\theta} [T(x)]^{\ts}G_W(\theta)^{-1}\nabla_\theta \mathbb{E}_{p_\theta} [T(x)]v.
\end{equation*}
Here we define $T_v = v^{\ts}T$ as the statistic associated to the vector $v$. Then the LHS of above formula equals to the variance of $T_v$:
\begin{equation*}
    v^{\ts}\mathrm{Cov}^W_\theta[T(x)]v = \mathrm{Var}^W_\theta[T_v].
\end{equation*}
\par
As we have mentioned before, score functions $\Phi^W_i$s, as a set of basis, span a linear space $V_{p\lp x;\theta \rpz}^* \Theta$ of the cotangent space $T_{p\lp x;\theta \rpz}^* \cp\lp \cx \rpz$ at each point $p\lp x;\theta\rpz$ on the density manifold. Meanwhile, the statistic $T_v \colon \mathcal{X}\rightarrow \mathbb{R}$ can be viewed as a cotangent vector field on this statistical model. Now, the subspace $V_{p\lp x;\theta \rpz}^* \Theta$ at each point $\theta$ is a finite-dimensional subspace of the Hilbert space $T_{p(x;\theta)}^*\mathcal{P}(\mathcal{X})$ endowed with the Wasserstein inner product. Thus it is a closed linear subspace. By elementary theory of functional analysis, we have orthogonal projection operator $\P$:
\begin{equation*}
    \P \colon T_{p(x;\theta)}^*\mathcal{P}(\mathcal{X}) \rightarrow V_{p\lp x;\theta \rpz}^* \Theta.
\end{equation*}
Since $\Phi^W_i$s span the whole subspace, we have:
\begin{equation*}
    \la \Phi^W_i, v - \P v \ra_{g_W} = 0, \qquad \forall v \in T_{p(x;\theta)}^*\mathcal{P}(\mathcal{X}).
\end{equation*}
\par
Now, back to the theorem, we have:
\begin{equation*}
    \mathrm{Var}^W_\theta[T_v] = \mathbb{E}_{p_\theta} \lb (\nabla_x T_v(x), \nabla_x T_v(x)^{\ts}) \rb = \la T_v, T_v \ra_{g_W} \geq \la \P T_v, \P T_v \ra_{g_W},
\end{equation*}    
where the last inequality holds by the property of the orthogonal projection operator.        \\
\indent
Now, since $\P$ is the projection onto the subspace $V_{p\lp x;\theta \rpz}^* \Theta$ with a set of basis $\Phi^W_i$, at each point $\theta$, we can write the cotangent vector $\P T_v$ as a linear combination of Wasserstein score functions:
\begin{equation*}
    \P T_v = \sum_{i = 1}^d t_i^{\theta} \Phi^W_i,
\end{equation*}
where the superscript of $t_i^{\theta}$ indicates the dependency on point $\theta$. Now, plugging this linear combination into the Wasserstein metric, we get:
\begin{equation*}  
    \begin{aligned}
        \la \P T_v, \P T_v \ra_{g_W} & = \sum_{i = 1}^d t_i^{\theta} \la \P T_v, \Phi^W_i \ra_{g_W}         \\
        & = \sum_{i,k = 1}^d t_k^{\theta} \delta_{i}^{k} \la \P T_v, \Phi^W_i \ra_{g_W}       \\
        & = \sum_{i,j,k = 1}^d t_k^{\theta} g_{kj} g^{ij}  \la \P T_v, \Phi^W_i \ra_{g_W}            \\
        & = \sum_{i,j = 1}^d \la \P T_v, \Phi^W_i \ra_{g_W} \lp G_W(\theta)^{-1} \rpz_{ij}  \la \P T_v, \Phi^W_i \ra_{g_W}     \\
        & = \nabla_\theta\mathbb{E}_{p_\theta} [T_v(x)]^{\ts}G_W(\theta)^{-1}\nabla_\theta\mathbb{E}_{p_\theta} [T_v(x)],
    \end{aligned}
\end{equation*}
where $G_W(\theta)^{-1}$ is the inverse matrix of the WIM, $g_{kj}$, $g^{ij}$ are elements of matrix $G_W$, $G_W^{-1}$ respectively, and the third equality holds by the fact $\sum_{j}g_{kj}g^{ij} = \delta_{i}^k$. The last equality is guaranteed by proposition \ref{Cov}.
Combining the above calculation and the comparison between the inner product of $T_v$ and $\P T_v$, we obtain the desired result.
\end{proof}
Given the above theorem, we can define the Wasserstein efficiency as follows. 
\begin{definition}
    For an estimator $T(x)$, it is Wasserstein efficient if and only if it attains the Wasserstein-Cramer-Rao bound, namely:
    \begin{equation*}
\mathrm{Var}^W_\theta[T(x)] = \nabla_\theta \mathbb{E}_{p_\theta} [T(x)]^{\ts}G_W(\theta)^{-1}\nabla_\theta \mathbb{E}_{p_\theta} [T(x)]. 
\end{equation*}
\end{definition}
\begin{remark}
    From the above derivation, a sufficient and necessary condition for a statistic to be efficient is that, it can be written as a linear combination of score functions. Notice this criterion is valid for any metrics, including both Fisher and Wasserstein metrics. This is a purely geometric condition and we seek below in various statistical models to find out its statistical significance.
\end{remark}
\begin{remark}
As shown in the above theorem, if we denote the Fisher-Rao metric as $g_F(p)=\frac{1}{p}$, we then derive the classical Cramer-Rao bound:
\begin{equation*}
\mathrm{Cov}_\theta(T(x), T(x))\geq \nabla_\theta\mathbb{E}_{p_\theta} [T(x)]^{\ts}G_{F}(\theta)^{-1}\nabla_\theta\mathbb{E}_{p_\theta} [T(x)].
\end{equation*}
Here the Fisher-Rao metric corresponds to the classical covariance operator
\begin{equation*}
\mathrm{Cov}_\theta(T(x), T(x))=\mathbb{E}_{p_\theta}[(T(x)-\mathbb{E}_{p_\theta}T(x), T(x)-\mathbb{E}_{p_\theta}T(x))],
\end{equation*}
which depends on the expectation of statistics $\mathbb{E}_{p_\theta} T(X)$. Furthermore, given any information matrix on a statistical model, we have an associated Cramer-Rao bound. 
\end{remark}
\subsection{Analytic examples}
\begin{example}[Gaussian distribution]Recall that given a Gaussian distribution with mean value $\mu$ and standard variance $\sigma$, Wasserstein score functions satisfy
\begin{equation*}
\begin{split}
\Phi_\mu^W(x;\mu,\sigma) = x - \mu , \quad\Phi_\sigma^W(x;\mu,\sigma)=\frac{(x-\mu)^2 - \sigma^2 }{2\sigma},
\end{split}
\end{equation*}
with the WIM 
\begin{equation*}
G_W(\mu,\sigma)=\begin{pmatrix}
1 & 0 \\
0 &  1
\end{pmatrix}.
\end{equation*}
Thus by the criterion, we know that efficient statistics in Wasserstein cases are exactly those which can be written as linear combinations of score functions. Since statistics only depend on samples $x_i$ and do not depend on parameters $\mu,\sigma$, they must be of the form:
\begin{equation*}
    ax^2 + bx + c = 2a\sigma \Phi_\sigma^W + (2a\mu + b)\Phi_\mu^W + c + a\mu^2 + b\mu - a\sigma^2,
\end{equation*}
since Wasserstein cotangent vectors are determined up to a constant. Wasserstein efficient statistics are degree 2 polynomials of $x$.
\par
While score functions for Fisher case are given by:
\begin{equation*}
\begin{split}
\Phi_\mu^F(x;\mu,\sigma) = \frac{x - \mu}{\sigma^2}, \quad \Phi_\sigma^F(x;\mu,\sigma) = \frac{(x-\mu)^2}{\sigma^3} - \frac{1}{\sigma}.
\end{split}
\end{equation*}
And the Fisher information matrix satisfies 
\begin{equation*}
G_F(\mu,\sigma) = \begin{pmatrix}
			\frac{1}{\sigma^2} & 0		\\
			0 & \frac{2}{\sigma^2}
		\end{pmatrix}.
\end{equation*}
Thus we conclude that although we have different score functions in Wasserstein and Fisher-Rao cases, it turns out that efficient statistics associated with these two information matrices coincide. But still, Fisher and Wasserstein information matrices provide us with different Cramer-Rao bounds. The Fisher-Cramer-Rao bound is better when we have prior knowledge that $\sigma$ is large while worse if $\sigma$ is small.

\end{example}
\begin{example}[Exponential distribution]
Given an exponential distribution, Wasserstein score functions satisfy
\begin{equation*}
\Phi^W_\lambda(x;m,\lambda)=\frac{(x - m)^2 - \frac{2}{\lambda^2}}{2\lambda}, \qquad \Phi^W_m(x;m,\lambda)= x - m - \frac{1}{\lambda},
\end{equation*}
and the WIM satisfies 
\begin{equation*}
G_W(m,\lambda)=\begin{pmatrix}
    1 & \frac{1}{\lambda^2}     \\
    \frac{1}{\lambda^2} & \frac{2}{\lambda^4}
\end{pmatrix}.
\end{equation*}
Similarly to Gaussian cases, Wasserstein sufficient statistics are also those which can be written as quadratic functions of variables $x$.
\par
While the counterpart for Fisher case reads:
\begin{equation*}
\begin{split}
\Phi_\lambda^F(x;m,\lambda) = m - x + \frac{1}{\lambda}, \quad \Phi_m^F(x;m,\lambda) \text{ not well defined}.
\end{split}
\end{equation*}
Meanwhile, the Fisher information matrix is also ill-behaved, in contrast to the well-definedness of both Wasserstein score functions and WIMs. This example provides a situation where Wasserstein statistics are better than the classical Fisher statistics.
\end{example}

\section{Wasserstein natural gradient works efficiently}\label{Efficiency}
\par
In this section, we study Wasserstein dynamics in terms of sampling and estimation processes. As a consequence, we prove asymptotic efficiencies of the Wasserstein natural gradient algorithm. And we refer it as Wasserstein efficiency. Meanwhile, another efficiency that we named Poincar{\'e} efficiency is introduced and connected to Poincar{\'e} inequalities and log-Sobolev inequalities, which are discussed in section \ref{sec-Func}.

\par
In the beginning, we review the natural gradient algorithm. We aim to estimate an un-known distribution in a probability family $p(x;\theta)$ with unknown parameters $ \theta \in \Theta$. Assume that an optimal parameter $p(x;\theta_*)$ coincides with the target distribution. Given a set of i.i.d. samples $x_i,i = 1,2,...$ from this distribution, we utilize a general online natural gradient algorithm to solve this problem:
\begin{equation}\label{iterate}
	\theta_{t+1} = \theta_t -\frac{1}{t}\nabla_{\theta}^W l(x_t, \theta_t).
\end{equation}
In the above formula, $\theta_t$ is an updating state variable, $\frac{1}{t}$ in the RHS is an adaptive factor. And $\nabla_{\theta}^W$ is the Riemannian (natural) gradient of the loss function $l$ w.r.t. $\theta$ in Wasserstein metric. It can also be understood as using WIM as a preconditioner to get a new gradient direction, i.e. $\nabla^W_\theta l=G_W^{-1} \nabla_{\theta}l,$ with $\nabla_{\theta} l$ being the Euclidean gradient. We first pose here a definition of the efficiency of the natural gradient algorithm, which generalizes the notion discussed in \cite{NG}. Denote the Wasserstein covariance matrix of estimator $\theta_t$ by:
\begin{equation*}
    V_{t} = \EE_{p_{\theta_*}}\lp \nabla_x (\theta_t - \theta_* ) \cdot \nabla_x (\theta_t - \theta_* )^T \rpz,
\end{equation*}
where $\nabla_x (\theta_t - \theta_* )$ is the matrix given by
\begin{equation*}
    \nabla_x (\theta_t - \theta_* ) = \begin{pmatrix}
        \frac{\pa (\theta_t - \theta_* )_1 }{\pa x_1} & \frac{\pa (\theta_t - \theta_* )_1 }{\pa x_2} & \cdots & \frac{\pa (\theta_t - \theta_* )_1 }{\pa x_n}     \\
        \frac{\pa (\theta_t - \theta_* )_2 }{\pa x_1} & \frac{\pa (\theta_t - \theta_* )_2 }{\pa x_2} & \cdots & \frac{\pa (\theta_t - \theta_* )_2 }{\pa x_n}     \\
        \vdots & \vdots & \ddots & \vdots \\
        \frac{\pa (\theta_t - \theta_* )_n }{\pa x_1} & \frac{\pa (\theta_t - \theta_* )_n }{\pa x_2} & \cdots & \frac{\pa (\theta_t - \theta_* )_n }{\pa x_n}
    \end{pmatrix},
\end{equation*}
and the multiplication is simply in matrix sense. It turns out that the element of the covariance matrix is given by
\begin{equation*}
    \begin{aligned}
        \EE_{p_{\theta_*}}\lp \nabla_x (\theta_t - \theta_* ) \cdot \nabla_x (\theta_t - \theta_* )^T \rpz_{ij} = & \ \EE_{p_{\theta_*}}\lp \nabla_x (\theta_t - \theta_* )_i \cdot \nabla_x (\theta_t - \theta_* )_j \rpz
    \end{aligned}
\end{equation*}
Here $\cdot$ refers to inner product of gradient vectors. And subscripts $p(\cdot;\theta_*)$ refer to take expectation on the set of samples $x_i \sim p(\cdot;\theta_*), i = 1,2,...$. Notice that in this algorithm, we obtain the $t$-th estimator $\theta_t$ via $t-1$ iterations of the above equation \eqref{iterate}. Then we actually have $\theta_t = \theta_t(x_1,x_2...,x_{t-1})$. Hence intuitively, the Wasserstein-Cramer-Rao bound for $\theta_t$ is given by $\frac{1}{t - 1}G_W^{-1}\lp \theta_* \rpz$. It inspires the following definition:
\begin{definition}
    The Wasserstein natural gradient is asymptotic efficient if
    \begin{equation*}
        V_t = \frac{1}{t}G_W^{-1}\lp \theta_* \rpz + O(\frac{1}{t^2}).
    \end{equation*}
\end{definition}
\begin{remark}
    This definition is similar to the definition of classical Fisher efficiency except that we substitute the Fisher information matrix by the WIM. This also indicates the importance of studying information matrices. And it will be shown that this quantity characterizes convergence rates of dynamics in statistical inference problems.
\end{remark}
We first state a general updating equation for this dynamics. Then, we specify two different loss functions, namely, Fisher scores and Wasserstein scores. And we discuss convergence properties of these two cases separately.
\begin{theorem}[Variance updating equation of the Wasserstein natural gradient]\label{update}
For any function $l(x,\theta)$ that satisfies the condition $\EE_{p_\theta}l(x,\theta) = 0$, consider here the asymptotic behavior of the Wasserstein dynamics $\theta_{t+1} = \theta_t - \frac{1}{t}G_W^{-1}(\theta_t)l(x_t,\theta_t)$. That is, assume priorly $\mbE_{p_{\theta_*}} \lb \lp \theta_t - \theta_* \rpz^2 \rb$, \\ $\mbE_{p_{\theta_*}} \lb \lv \nabla_x \lp \theta_t - \theta_* \rpz \rv^2 \rb = o(1), \ \forall t$. Then, the Wasserstein covariance matrix $V_t$ updates according to the following equation: 
\begin{equation*}
    \begin{aligned}
     V_{t+1} = & \ V_t + \frac{1}{t^2} G_W^{-1}(\theta_*) \mbE_{p_{\theta_*}} \lb \nabla_x \lp l(x_t,\theta_*) \rpz \cdot \nabla_x \lp l(x_t,\theta_*)^{T} \rpz  \rb \left( G_W^{-1}(\theta_*) \right) \\ 
     & - \frac{2V_t}{t} \mbE_{p_{\theta_*}} \lb \nabla_{\theta}l(x_t,\theta_*) \rb G_W^{-1} (\theta_*) + o(\frac{V_t}{t}) + o\left( \frac{1}{t^2} \right).
	\end{aligned}
\end{equation*}
\end{theorem}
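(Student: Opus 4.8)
The plan is to read the recursion as a stochastic linearization about the true parameter $\theta_*$ and to propagate the second spatial moment. First I would set $e_t = \theta_t - \theta_*$, so that the update gives $e_{t+1} = e_t - \frac{1}{t} G_W^{-1}(\theta_t) l(x_t,\theta_t)$, and Taylor-expand the increment in $\theta_t$ about $\theta_*$,
\[
G_W^{-1}(\theta_t) l(x_t,\theta_t) = G_W^{-1}(\theta_*) l(x_t,\theta_*) + G_W^{-1}(\theta_*)\nabla_\theta l(x_t,\theta_*)\, e_t + R_t,
\]
where $R_t$ gathers the second-order term in $e_t$ together with the first-order variation of $G_W^{-1}$. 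By the a priori hypothesis $\EE_{p_{\theta_*}}[e_t^2]=o(1)$ this remainder is of order $o(e_t)$ and, after the manipulations below, will only feed the $o(V_t/t)$ and $o(1/t^2)$ error terms.

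Next I would apply the spatial gradient $\nabla_x$, using its linearity, to write $\nabla_x e_{t+1}$ as the sum of the carried error $\nabla_x e_t$, the fresh-noise term $-\frac{1}{t} G_W^{-1}(\theta_*)\nabla_x l(x_t,\theta_*)$, the linearized contraction $-\frac{1}{t}\nabla_x\!\left[ G_W^{-1}(\theta_*)\nabla_\theta l(x_t,\theta_*) e_t \right]$, and $-\frac{1}{t}\nabla_x R_t$. I would then substitute this into $V_{t+1} = \EE_{p_{\theta_*}}\!\left[ \nabla_x e_{t+1} \cdot (\nabla_x e_{t+1})^{T} \right]$ and expand the quadratic form. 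The self-term of $\nabla_x e_t$ reproduces $V_t$; the self-term of the fresh noise produces the $\frac{1}{t^2} G_W^{-1}(\theta_*)\EE_{p_{\theta_*}}[\nabla_x l(x_t,\theta_*)\cdot\nabla_x l(x_t,\theta_*)^{T}] G_W^{-1}(\theta_*)$ contribution; the remaining pieces are cross-terms that must be classified.

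The decisive simplifications exploit that the fresh observation $x_t$ is independent of $x_1,\dots,x_{t-1}$, hence of $e_t$ and $\nabla_x e_t$, together with the standing hypothesis $\EE_{p_\theta} l(x,\theta)=0$. Exactly as in the classical Fisher computation of \cite{NG}, the cross-term between the carried error $\nabla_x e_t$ and the fresh noise decouples in expectation and is annihilated by the mean-zero condition, so it drops out. The cross-term between $\nabla_x e_t$ and the linearized contraction is where the $-\frac{2V_t}{t}$ term originates: its dominant piece is $-\frac{2}{t}\EE_{p_{\theta_*}}[\nabla_x e_t \cdot (G_W^{-1}(\theta_*)\nabla_\theta l(x_t,\theta_*)\nabla_x e_t)^{T}]$, and since $\nabla_\theta l(x_t,\theta_*)$ depends only on $x_t$ it factors out of the expectation as $\EE_{p_{\theta_*}}[\nabla_\theta l(x_t,\theta_*)]$; keeping track of the transposes and of the symmetry of $G_W^{-1}(\theta_*)$, this leaves $-\frac{2V_t}{t}\EE_{p_{\theta_*}}[\nabla_\theta l(x_t,\theta_*)] G_W^{-1}(\theta_*)$. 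The companion piece, in which $\nabla_x$ falls on $\nabla_\theta l(x_t,\theta_*)$ instead of on $e_t$, carries an extra factor $e_t$ and is pushed into $o(V_t/t)$ by Cauchy--Schwarz and the a priori bounds.

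Finally I would collect the leftover contributions: the self-term of the contraction is of order $\frac{1}{t^2}\EE_{p_{\theta_*}}[|\nabla_x e_t|^2] = o(1/t^2)$, the noise--contraction cross-term carries a factor $e_t$ and is $o(1/t^2)$, and every term involving $R_t$ or $\nabla_x R_t$ is controlled by $\EE_{p_{\theta_*}}[e_t^2]$, $\EE_{p_{\theta_*}}[|\nabla_x e_t|^2]=o(1)$ and Cauchy--Schwarz; all of these are absorbed into $o(V_t/t)+o(1/t^2)$. Assembling the surviving terms yields the stated recursion. I expect the main obstacle to be the rigorous bookkeeping of these cross-terms: justifying the factorization of the expectations through the independence of $x_t$ from the accumulated estimator when $\nabla_x$ acts simultaneously on $e_t$ and on the fresh observation, confirming that the mean-zero hypothesis genuinely cancels the fresh-noise cross-term at the level of spatial gradients, and maintaining consistent matrix ordering (left/right multiplication by the symmetric $G_W^{-1}$ and the transposes) throughout.
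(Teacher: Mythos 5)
Your outline follows the same route as the paper's proof: linearize $l(x_t,\theta_t)$ and $G_W^{-1}(\theta_t)$ about $\theta_*$, take the Wasserstein covariance of $\nabla_x e_{t+1}$, identify the self-term of the fresh noise with the $\frac{1}{t^2}$ term, extract $-\frac{2V_t}{t}\mathbb{E}_{p_{\theta_*}}[\nabla_\theta l(x_t,\theta_*)]G_W^{-1}(\theta_*)$ from the cross term with the linearized contraction via independence of $x_t$ from $x_1,\dots,x_{t-1}$, and sweep everything else into $o(V_t/t)+o(1/t^2)$ by Cauchy--Schwarz and the a priori bounds. All of that matches.

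The one genuine flaw is your stated mechanism for killing the cross term between the carried error and the fresh noise. You claim it ``decouples in expectation and is annihilated by the mean-zero condition.'' After decoupling, that argument would require $\mathbb{E}_{p_{\theta_*}}[\nabla_x l(x_t,\theta_*)]=0$, which does \emph{not} follow from $\mathbb{E}_{p_\theta}[l(x,\theta)]=0$ and is false in the cases the theorem is applied to: for the Gaussian family with $l=\Phi^W_\mu=x-\mu$ one has $\nabla_x l\equiv 1$, so $\mathbb{E}_{p_{\theta_*}}[\nabla_x l]=1$. The correct reason this term vanishes is structural, not probabilistic: $\nabla_x$ is the gradient over the whole sample vector $(x_1,\dots,x_t)$, and $\nabla_x e_t$ has nonzero entries only in the block $x_1,\dots,x_{t-1}$ while $\nabla_x l(x_t,\theta_*)$ has a nonzero entry only in the $x_t$ slot, so their pointwise dot product is identically zero before any expectation is taken. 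The mean-zero hypothesis is needed only for the companion piece of that same cross term in which $\nabla_x$ falls on $G_W^{-1}(\theta_t)$ (which depends on $x_1,\dots,x_{t-1}$ through $\theta_t$) rather than on $l$: there $l(x_t,\theta_*)$ survives undifferentiated, the expectation factors by independence, and $\mathbb{E}_{p_{\theta_*}}[l(x_t,\theta_*)]=0$ finishes it. You flagged exactly this point as the obstacle to check, and indeed as written your cancellation would not go through; with the support-disjointness argument substituted in, the proof closes and coincides with the paper's.
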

\begin{remark}
    In general, it will be shown that such a simple updating equation merely attributes to properties of information matrices. Specifically, any statistical information matrices with separability property w.r.t. independent variables have this form of updating equation. For the WIM, this is already established in proposition \ref{sep}. And for the Fisher information matrix, this is only a property of expectation under independent variables. Further results such as efficiency of the natural gradient can be established with the same procedure below.
\end{remark}

\par
The proof is technical and we leave it to section \ref{proof-main}. 
Here, we show several important cases of Theorem \ref{update}.
\subsection{Wasserstein natural gradient for Wasserstein scores}
\par
In Fisher case studied by \cite{NG}, we have:
\begin{equation*}
    \nabla_{\theta}^F l(x_t, \theta_t) = G_F^{-1}\Phi^F\lp x_t; \theta_t \rpz,
\end{equation*}
with $l(x_t, \theta_t)$ the log-likelihood function. Thus a natural generalization to Wasserstein geometry is: 
\begin{equation}
    \nabla_{\theta}^W l(x_t,\theta_t) = G_W^{-1} \Phi^W\lp x_t; \theta_t \rpz.
\end{equation}
\indent
 Concerned with this dynamics, we have the following corrolary.
\begin{corollary}[Wasserstein natural gradient efficiency]\label{WNE}
    For the dynamics
\begin{equation*}
    \theta_{t+1} = \theta_t - \frac{1}{t}G_W^{-1}(\theta_t)\Phi^W(x_t; \theta_t),
\end{equation*}
the Wasserstein covariance updates according to
\begin{equation*}
    \begin{aligned}
     V_{t+1} = & V_t + \frac{1}{t^2} G_W^{-1}(\theta_*) - \frac{2}{t} V_t + o\left( \frac{1}{t^2} \right) + o(\frac{V_t}{t}).
	\end{aligned}
\end{equation*}
Then, the online Wasserstein natural gradient algorithm is Wasserstein efficient, that is:
\begin{equation}\label{Cramer-Rao}
    V_t = \frac{1}{t}G_W^{-1}\lp \theta_* \rpz + O\lp \frac{1}{t^2} \rpz.
\end{equation}
\end{corollary}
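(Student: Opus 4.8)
The plan is to obtain Corollary~\ref{WNE} as a direct specialization of the general variance updating equation in Theorem~\ref{update} to the loss $l(x,\theta)=\Phi^W(x;\theta)$, followed by an asymptotic analysis of the resulting matrix recursion. First I would verify admissibility: the normalization condition \eqref{normalization} gives $\mathbb{E}_{p_\theta}\Phi^W=0$, so the hypothesis $\mathbb{E}_{p_\theta}l(x,\theta)=0$ of Theorem~\ref{update} holds, and it remains only to evaluate at $\theta_*$ the two expectations appearing there, namely $\mathbb{E}_{p_{\theta_*}}[\nabla_x\Phi^W\cdot\nabla_x(\Phi^W)^{\ts}]$ and $\mathbb{E}_{p_{\theta_*}}[\nabla_\theta\Phi^W]$.

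The first expectation is immediate from the definition of the WIM, since $(G_W(\theta))_{ij}=\mathbb{E}_{p_\theta}[\nabla_x\Phi^W_i\cdot\nabla_x\Phi^W_j]$; hence it equals $G_W(\theta_*)$ and the diffusion term collapses to $\tfrac{1}{t^2}G_W^{-1}(\theta_*)G_W(\theta_*)G_W^{-1}(\theta_*)=\tfrac{1}{t^2}G_W^{-1}(\theta_*)$. For the second expectation I would differentiate the normalization $\int\Phi^W_i(x;\theta)p(x;\theta)\,dx=0$ in $\theta_j$, obtaining
\begin{equation*}
\mathbb{E}_{p_\theta}\!\Big[\tfrac{\partial}{\partial\theta_j}\Phi^W_i\Big]+\int\Phi^W_i\,\tfrac{\partial}{\partial\theta_j}p\,dx=0 .
\end{equation*}
Using the defining relation $\tfrac{\partial}{\partial\theta_j}p=-\nabla_x\cdot(p\,\nabla_x\Phi^W_j)$ and integrating by parts, the remaining integral equals $(G_W(\theta))_{ij}$, so that $\mathbb{E}_{p_\theta}[\nabla_\theta\Phi^W]=-G_W(\theta)$; this is the Wasserstein analogue of the Fisher identity $G_F=-\mathbb{E}[\nabla_\theta^2\log p]$. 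Substituting both identities into Theorem~\ref{update} turns the drift coefficient into a scalar multiple of $G_W(\theta_*)G_W^{-1}(\theta_*)=I$ and reduces the recursion to the stated form
\begin{equation*}
V_{t+1}=V_t+\tfrac{1}{t^2}G_W^{-1}(\theta_*)-\tfrac{2}{t}V_t+o\!\left(\tfrac{1}{t^2}\right)+o\!\left(\tfrac{V_t}{t}\right).
\end{equation*}

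It then remains to solve this recursion. I would set $W_t=V_t-\tfrac{1}{t}G_W^{-1}(\theta_*)$ and expand $\tfrac{1}{t+1}=\tfrac{1}{t}-\tfrac{1}{t^2}+O(t^{-3})$; the leading $\tfrac{1}{t}$ and $\tfrac{1}{t^2}$ contributions then cancel, leaving a homogeneous recursion $W_{t+1}=(1-\tfrac{2}{t})W_t+(\text{higher-order terms})$. Since $\prod_{s}(1-\tfrac{2}{s})$ decays like $t^{-2}$, the homogeneous part contracts at rate $t^{-2}$, and summing the forcing against this contraction gives $W_t=O(t^{-2})$, which is exactly \eqref{Cramer-Rao}. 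The main obstacle is not this algebra but the control of the error terms: the $o(V_t/t)$ contribution only becomes genuinely lower order once $V_t=O(1/t)$ is known, so a bootstrap is required --- one first invokes the a priori assumption $\mathbb{E}_{p_{\theta_*}}[|\nabla_x(\theta_t-\theta_*)|^2]=o(1)$ of Theorem~\ref{update} to ensure the remainders are lower order, then upgrades the decay of $V_t$ from $o(1)$ to the sharp leading order $\tfrac{1}{t}G_W^{-1}$ with $O(1/t^2)$ remainder. Additional care is needed because $V_t$ is matrix-valued: the scalar recursion argument must be carried out entrywise or after conjugating by the symmetric square root of $G_W(\theta_*)$, and the non-commutativity of the remainder terms tracked throughout.
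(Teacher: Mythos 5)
Your proposal follows essentially the same route as the paper: specialize Theorem~\ref{update} to $l=\Phi^W$, identify $\mathbb{E}_{p_{\theta_*}}[\nabla_x\Phi^W\cdot\nabla_x(\Phi^W)^{\ts}]=G_W(\theta_*)$ from the definition of the WIM, derive $\mathbb{E}_{p_{\theta_*}}[\nabla_\theta\Phi^W]=-G_W(\theta_*)$ by differentiating the normalization condition, and then extract the $\tfrac{1}{t}G_W^{-1}(\theta_*)$ leading term from the resulting recursion. The only difference is cosmetic: the paper solves the recursion by an undetermined-coefficients ansatz $V_t=\tfrac{x}{t}+\tfrac{y}{t^2}+o(t^{-2})$, whereas you subtract the candidate leading term and run a contraction/bootstrap argument, which is a slightly more careful justification of the same asymptotics.
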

\begin{proof}[Proof of Corollary \ref{WNE}]
    If we choose function $f(x,t)$ to be Wasserstein scores $\Phi^W_i$, we will have following simplification:
    \begin{equation*}
        \mbE_{p_{\theta_*}} \lb \nabla_x \lp \Phi^W(x_t; \theta_*) \rpz \cdot \nabla_x \lp \Phi^W(x_t; \theta_*)^{T} \rpz  \rb = G_W(\theta_*),
    \end{equation*}
    since $\Phi^W$ is the dual coordinate of the statistical model. We also have:
    \begin{equation*}
        \mbE_{p_{\theta_*}} \lb \nabla_{\theta}\Phi^W(x_t; \theta_*) \rb = - G_W(\theta_*),
    \end{equation*}
    which is given by differentiating $\mbE_{p_{\theta_*}} \lb \Phi^W(x_t; \theta_*) \rb = \mathbf{0}$ by $\theta$:
    \begin{equation*}
        \begin{aligned}
            \mathbf{0} = & \ \nabla_{\theta}\mbE_{p_{\theta_*}} \lb \Phi^W(x_t; \theta_*) \rb        \\
            = & \ \nabla_{\theta} \lb \int_{\cx} p(x;\theta_*)\Phi^W(x; \theta_*)dx\rb      \\
            = & \ \int_{\cx} \nabla_{\theta}p(x;\theta_*)\Phi^W(x; \theta_*)dx + \int_{\cx} p(x;\theta_*)\nabla_{\theta}\Phi^W(x; \theta_*)dx      \\
            = & \ G_W + \int_{\cx} p(x;\theta_*)\nabla_{\theta}\Phi^W(x; \theta_*)dx,
        \end{aligned}
    \end{equation*}
    where the last equality holds because of the pairing between tangent vector and cotangent vector. And the final updating equation for the Wasserstein covariance reduces to:
    \begin{equation*}
        V_{t+1} = V_t + \frac{1}{t^2} G_W^{-1}(\theta_*) - \frac{2}{t} V_t + O\left( \frac{1}{t^3} \right) + o(\frac{V_t}{t}).
    \end{equation*}
    \indent
    To further solve this updating equation, we expand $V_t = \frac{x}{t} + \frac{y}{t^2} + o\lp \frac{1}{t^2} \rpz$ with constant $x,y$ to be determined and plug into the equation(we ignore the term that is of order $o\lp \frac{1}{t^2} \rpz$):
    \begin{equation*}
        \frac{x}{t + 1} + \frac{y}{\lp t + 1 \rpz^2} + o\lp \frac{1}{t^2} \rpz = \frac{x}{t} + \frac{y}{t^2} + o\lp \frac{1}{t^2} \rpz + \frac{1}{t^2}G_W^{-1}\lp\theta_*\rpz - \frac{2x}{t^2} + o\lp \frac{1}{t^2} \rpz,
    \end{equation*}
    which is equivalent to:
    \begin{equation*}
        \lp \frac{x}{t + 1} - \frac{x}{t} \rpz + \lp \frac{y}{\lp t + 1 \rpz^2} - \frac{y}{t^2} \rpz + \frac{2x}{t^2} - \frac{1}{t^2}G_W^{-1}\lp\theta_*\rpz + o\lp \frac{1}{t^2} \rpz = 0.
    \end{equation*}
    And we conclude that:
    \begin{equation*}
        x = G_W^{-1}\lp\theta_*\rpz.
    \end{equation*}
    Thus, we asymptotically have following estimation on the Wasserstein covariance concerned with this dynamics:
    \begin{equation*}
    V_t = \frac{1}{t}G_W^{-1}\lp \theta_* \rpz + o\lp \frac{1}{t} \rpz.
    \end{equation*}
\end{proof}
\begin{remark}
    At first, such a generalization to Wasserstein metric may seem unreasonable. We only use a fact that both of them are metrics on probability spaces. Different from Fisher scores $\Phi^F = \nabla_{\theta}l\lp x; \theta \rpz$, Wasserstein scores $\Phi^W$ can not be written as gradients of some functions w.r.t. $\theta$. There is no such ``loss functions''. However, a key insight here is that, if in a second we assume that the statistical model $\Theta$ is exactly the density manifold $G_W(p_\theta) = g_W(p_\theta), G_F(p_\theta) = g_F(p_\theta)$: 
\begin{equation*}
    \begin{aligned}
        G_W^{-1}(p_\theta) \Phi^W\lp x; \theta \rpz = & \  g_W(p_\theta)g_W^{-1}(p_\theta)\frac{\pa}{\pa \theta}p(x;\theta) = \nabla_{\theta} p(x_t;\theta_t)    \\
        = & \  g_F(p_\theta)g_F^{-1}(p_\theta)\frac{\pa}{\pa \theta}p(x;\theta) = G_F^{-1}(p_\theta) \Phi^F\lp x; \theta \rpz.
    \end{aligned}
\end{equation*}
Then both two dynamics can be written in the following way:
\begin{equation*}
    \theta_{t+1} = \theta_t -\frac{1}{t}\nabla_{\theta}p(x_t; \theta_t).
\end{equation*}
\end{remark}

\subsection{Wasserstein natural gradient for Fisher scores}
\par
Another phenomenon appears when we consider the Wasserstein natural gradient applies to Fisher scores. Specifically, we use log-likelihood function as a loss function and apply WIM as a preconditioner. The dynamics concerned in this case is given by:
\begin{equation*}
    \theta_{t+1} = \theta_t - \frac{1}{t}\nabla_{\theta}^W l(x_t,\theta_t).
\end{equation*}
The Wasserstein natural gradient is simply $\nabla_{\theta}^W l\lp x_t,\theta_t \rpz = G_W^{-1}\nabla_{\theta} l \lp x_t,\theta_t \rpz$. We comment here that $\nabla_{\theta}l \lp x_t,\theta_t \rpz = \Phi^F(x_t,\theta_t)$ is both the Euclidean gradient of log-likelihood function $l$ w.r.t. $\theta$ and the Fisher score. And the convergence analysis is shown in the following corollary:
\begin{corollary}[Poincar\'e efficiency]\label{WFNE}
For the dynamics
\begin{equation*}
    \theta_{t+1} = \theta_t - \frac{1}{t}\nabla_{\theta}^W l(x_t,\theta_t),
\end{equation*}
the Wasserstein covariance updates according to
\begin{equation*}
	\begin{aligned}
	V_{t+1} = & \ V_t + \frac{1}{t^2} G_W^{-1}(\theta_*) \mbE_{p_{\theta_*}} \lb \nabla_x \lp \nabla_{\theta}l(x_t,\theta_*) \rpz \cdot \nabla_x \lp \nabla_{\theta}l(x_t,\theta_*)^{T} \rpz  \rb G_W^{-1}(\theta_*) \\
& \ - \frac{2}{t} V_t G_F (\theta_*) G_W^{-1} (\theta_*) + O\left( \frac{1}{t^3} \right) + o\lp \frac{V_t}{t} \rpz .
	\end{aligned}
\end{equation*}
Now suppose that $\alpha = sup \{ a | G_F \succeq a G_W \}$. Then the dynamics is characterized by the following formula
\begin{equation}\label{Poincare}
    \begin{aligned}
        V_t & = \lbb 
        \begin{aligned}
            & \ O\lp t^{-2\alpha} \rpz,  \qquad \qquad \qquad \qquad \qquad \qquad \qquad \qquad \qquad \  2\alpha \leq 1,   \\
            & \  \frac{1}{t}\lp 2G_FG_W^{-1} - \I \rpz^{-1}{G_W^{-1}(\theta_*) \mathfrak{I} \left( G_W^{-1}(\theta_*) \right)} + O\lp \frac{1}{t^2} \rpz, \quad 2\alpha > 1,
        \end{aligned}\right.
        \end{aligned}
        \end{equation}
where \begin{equation*}
        \mathfrak{I} = \mbE_{p_{\theta_*}} \lb \nabla_x \lp \nabla_{\theta}l(x_t,\theta_*) \rpz \cdot \nabla_x \lp \nabla_{\theta}l(x_t,\theta_*)^{T} \rpz  \rb,
\end{equation*}
where elements of this matrix is given by
\bequn
    \mathfrak{I}_{ij} = \mbE_{p_{\theta_*}} \lb \nabla_x \lp \nabla_{\theta_i}l(x_t,\theta_*) \rpz \cdot \nabla_x \lp \nabla_{\theta_j}l(x_t,\theta_*)^{T} \rpz  \rb
\eequn

\end{corollary}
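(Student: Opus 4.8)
The plan is to obtain the corollary in two stages: first specialize the general variance updating equation of Theorem~\ref{update} to the Fisher score, and then analyze the resulting matrix recursion asymptotically, with the dichotomy governed by the generalized spectrum of the pencil $(G_F,G_W)$.

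\textbf{Deriving the updating equation.} I would apply Theorem~\ref{update} with the driving function taken to be the Fisher score $\Phi^F=\nabla_\theta l$, which is admissible since $\EE_{p_\theta}\Phi^F=0$ (the classical normalization of the score). The diffusion term then becomes $\frac{1}{t^2}G_W^{-1}(\theta_*)\,\mathfrak{I}\,G_W^{-1}(\theta_*)$ directly from the definition of $\mathfrak{I}=\EE_{p_{\theta_*}}[\nabla_x(\nabla_\theta l)\cdot\nabla_x(\nabla_\theta l)^{\ts}]$. For the drift coefficient I would invoke the classical Fisher identity $\EE_{p_{\theta_*}}[\nabla_\theta\Phi^F]=\EE_{p_{\theta_*}}[\nabla^2_\theta\log p]=-G_F(\theta_*)$, obtained by differentiating $\EE_{p_\theta}\Phi^F=0$ once more in $\theta$. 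Substituting into the drift term $-\tfrac{2V_t}{t}\EE_{p_{\theta_*}}[\nabla_\theta l]\,G_W^{-1}$ of Theorem~\ref{update} produces the $-\tfrac{2}{t}V_tG_F(\theta_*)G_W^{-1}(\theta_*)$ contribution, while the a priori bounds $\EE_{p_{\theta_*}}[(\theta_t-\theta_*)^2]$, $\EE_{p_{\theta_*}}[|\nabla_x(\theta_t-\theta_*)|^2]=o(1)$ let the $o(1/t^2)$ remainder be upgraded to $O(1/t^3)$, yielding the stated updating equation.

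\textbf{Reducing to scalar recursions and the dichotomy.} Write $A=G_F(\theta_*)G_W^{-1}(\theta_*)$ and $B=G_W^{-1}(\theta_*)\,\mathfrak{I}\,G_W^{-1}(\theta_*)$, so the leading recursion reads $V_{t+1}=V_t-\tfrac{2}{t}AV_t+\tfrac{1}{t^2}B+(\text{lower order})$. Since $G_W\succ 0$ there is a congruence sending $G_W\mapsto\I$ and $G_F\mapsto\Lambda=\mathrm{diag}(\lambda_1,\dots,\lambda_d)$ with $\lambda_i>0$, and by definition $\alpha=\min_i\lambda_i=\sup\{a:G_F\succeq aG_W\}$. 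In these coordinates $A=\Lambda$ is diagonal and the matrix recursion decouples into scalar recursions $v_{t+1}=v_t(1-\tfrac{c}{t})+\tfrac{b}{t^2}+\cdots$ whose exponent $c$ is $2\lambda_i$ for the slowest mode $c=2\alpha$. The homogeneous part decays like $\prod_s(1-c/s)\asymp t^{-c}$, while a particular solution of size $\tfrac{b}{(c-1)t}$ exists for $c\neq1$. When $2\alpha>1$ every mode has $c\ge 2\alpha>1$, so homogeneous contributions are $o(1/t)$ and the $1/t$ particular solution dominates; matching $V_t=\tfrac{1}{t}C+O(1/t^2)$ in the full recursion yields a linear matrix equation for $C$ whose solution, after tracking the product order, is $C=(2A-\I)^{-1}B=(2G_FG_W^{-1}-\I)^{-1}G_W^{-1}\mathfrak{I}\,G_W^{-1}(\theta_*)$, the second branch. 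When $2\alpha\le1$ the slowest homogeneous mode decays no faster than $t^{-2\alpha}$ and dominates, giving $V_t=O(t^{-2\alpha})$, the first branch.

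\textbf{Main obstacle.} I expect the technical crux to be making the recursion analysis rigorous rather than heuristic: one must run a bootstrapping argument that upgrades the a priori bound $V_t=o(1)$ to the sharp rate, controlling how the accumulated $o(V_t/t)$ and $O(1/t^3)$ remainders telescope so they cannot corrupt the leading order, which requires Gronwall-type estimates on the products $\prod_s(1-c/s)$ and summability of the remainder contributions. A secondary subtlety is the non-commutativity of $G_F$ and $G_W$: the one-sided product $A V_t$ must be handled through the simultaneous-diagonalization coordinates, and one should verify that the matched constant is basis-independent and genuinely reduces to $(2A-\I)^{-1}B$ (keeping careful track of whether $A$ acts on the left of $V_t$, since this fixes the multiplication order in the closed form).
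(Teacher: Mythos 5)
Your proposal is correct and its first stage coincides exactly with the paper's: specialize Theorem \ref{update} to $l$ with $\nabla_\theta l=\Phi^F$, and obtain the drift coefficient from the identity $\mathbb{E}_{p_{\theta_*}}[\nabla_\theta\Phi^F]=-G_F(\theta_*)$, derived by differentiating the zero-mean property of the score. Where you diverge is in the asymptotic analysis of the recursion. The paper keeps the recursion in matrix form, substitutes the power-law ansatz $V_t=A_1t^{-q}+A_2t^{-q-1}+o(t^{-q-1})$, uses the mean value theorem to expand $t^{-q}-(t+1)^{-q}$, and matches coefficients: the constraint $A_1(q\mathbf{I}-2B)=0$ with $B=G_FG_W^{-1}$ forces $q\geq 2\alpha$ in the subcritical case (after noting $B$ is similar to the symmetric matrix $G_W^{-1/2}G_FG_W^{-1/2}$, whose least eigenvalue is $\alpha$), and direct matching at order $t^{-2}$ fixes $A_1$ in the supercritical case. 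You instead congruence-diagonalize the pencil $(G_F,G_W)$, decouple into scalar recursions, and argue via the homogeneous decay $\prod_s(1-c/s)\asymp t^{-c}$ versus a particular solution of size $b/((c-1)t)$. Your route is somewhat more rigorous about what the paper treats formally — in particular your Gronwall/telescoping concern about the accumulated $o(V_t/t)$ and $O(1/t^3)$ remainders is a genuine gap the paper does not address — at the cost of having to justify that the off-diagonal modes of $V_t$ in the diagonalizing coordinates also decouple (they do, since the drift acts by right multiplication by the diagonal matrix $\Lambda$, but each entry then decays at rate $\lambda_j$ rather than a symmetrized rate, which you should check still yields the $t^{-2\alpha}$ bound). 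Your closing caution about operator order is well placed: the drift in the stated recursion is $-\tfrac{2}{t}V_tG_FG_W^{-1}$, i.e.\ right multiplication, so the matching actually yields $A_1=C(2B-\mathbf{I})^{-1}$ rather than the left-inverted form $(2B-\mathbf{I})^{-1}C$ printed in the corollary; the two agree only when $B$ and $C$ commute (as in the diagonal Gaussian example), and the paper itself silently passes over this point.
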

\begin{proof}[Proof of Corollary \ref{WFNE}]
    The result is obtained once we observe that:
    \begin{equation*}
        \mbE_{p_{\theta_*}} \lb \nabla_{\theta}\Phi^F(x_t,\theta_*) \rb = - G_F(\theta_*),
    \end{equation*}
    which follows exactly the same philosophy of the previous case. We conclude that the Wasserstein covariance updates according to:
    \begin{equation*}
	\begin{aligned}
	V_{t+1} = & \ V_t + \frac{1}{t^2} G_W^{-1}(\theta_*) \mbE_{p_{\theta_*}} \lb \nabla_x \lp \nabla_{\theta}l(x_t,\theta_*) \rpz \cdot \nabla_x \lp \nabla_{\theta}l(x_t,\theta_*)^{T} \rpz  \rb \left( G_W^{-1}(\theta_*) \right) \\
& \  - \frac{2}{t} V_t G_F (\theta_*) G_W^{-1} (\theta_*) + O\left( \frac{1}{t^3} \right) + o\lp \frac{V_t}{t} \rpz .
	\end{aligned}
\end{equation*}
    \par
    Next, we solve this dynamics asymptotically. We denote $G_F (\theta_*) G_W^{-1} (\theta_*) = B$ and $G_W^{-1}(\theta_*) \mbE_{p_{\theta_*}} \lb \nabla_x \lp \nabla_{\theta}l(x_t,\theta_*) \rpz \cdot \nabla_x \lp \nabla_{\theta}l(x_t,\theta_*)^{T} \rpz  \rb \left( G_W^{-1}(\theta_*) \right) = C$.
    \par
    Now by elementary linear algebra, we know that the matrix $B = G_F (\theta_*) G_W^{-1} (\theta_*)$ is similar to the matrix $G_W^{-\half}G_FG_W^{-\half}$. Hence their eigenvalues coincide. While the definition of $\alpha$ translates to that the least eigenvalue of the symmetric matrix $G_W^{-\half}G_FG_W^{-\half}$ is exactly $\alpha$. Thus we conclude that the least eigenvalues of the matrix $B$ are also $\alpha$. Suppose first that $2\alpha < 1$, we consider the following expansion of matrix $V_t$:
    \begin{equation*}
        V_t = \frac{A_1}{t^q} + \frac{A_2}{t^{q + 1}} + o\lp \frac{1}{t^{q+1}} \rpz, \qquad A_1, A_2 = O(1).
    \end{equation*}
    And plug the above equation to both sides of the updating equation, we find:
    \begin{equation*}
        \frac{A_1}{\lp t + 1 \rpz^q} + \frac{A_2}{\lp t + 1 \rpz^{q + 1}} + o\lp \frac{1}{t^{q+1}} \rpz = \frac{A_1}{t^q} + \frac{A_2}{t^{q + 1}} + o\lp \frac{1}{t^{q+1}} \rpz -  \frac{2A_1B}{t^{q + 1}} + \frac{C}{t^2} + O(\frac{1}{t^3}).
    \end{equation*}
    Using the Lagrange's mean value theorem, we have:
    \begin{equation*}
        \frac{A}{t^q} - \frac{A}{\lp t + 1 \rpz ^q} = \frac{qA}{\lp t + \upsilon \rpz^{q + 1}} = \frac{qA}{t^{q + 1}} + o(\frac{1}{t^q}), \qquad \upsilon \in [0,1].
    \end{equation*}
    Substituting back to the equation, we get:
    \begin{equation*}
        \mathbf{0} = \frac{A_1 \lp q\I - 2B \rpz }{t^{q + 1}} + \frac{C}{t^2} + o(\frac{1}{t^{q+1}}) + O(\frac{1}{t^3}).
    \end{equation*}
    We conclude that we cannot have $q$ strictly greater than $1$, for then the most significant term in the RHS will be $\frac{C}{t^2} \neq \mathbf{0}$ which contradicts to the LHS. Thus if we have $q < 2\alpha < 1$, the matrix $q\I - 2B$ will be negative definite, and we cannot have $A_1\lp q\I - 2B \rpz = \mathbf{0}$ unless $A_1$ equals to $0$. Consequently, the index $q$ should be greater than or equal to $2\alpha$. And we have that asymptotically
    \begin{equation*}
        V_t = O\lp \frac{1}{t^{2\alpha}}\rpz.
    \end{equation*}
    \par
    While for the situation such that $2\alpha > 1$, we expand $V_t = \frac{A_1}{t} + \frac{A_2}{t^2} + o\lp \frac{1}{t^2} \rpz$ with constant $A_1,A_2$ to be determined:
    \begin{equation*}
        \frac{A_1}{t + 1} + \frac{A_2}{\lp t + 1 \rpz^2} + o\lp \frac{1}{t^2} \rpz = \frac{A_1}{t} + \frac{A_2}{t^2} + o\lp \frac{1}{t^2} \rpz + \frac{C}{t^2} - \frac{2A_1B}{t^2} + o\lp \frac{1}{t^2} \rpz.
    \end{equation*}
    The constant $A_1$ can be fixed by considering the coefficient of the term $\frac{1}{t^2}$ for both sides with conclusion:
    \begin{equation*}
        A_1 = \lp 2B - \I \rpz^{-1}C.
    \end{equation*}
    Here, invertibility of the matrix $2B - \I$ is guaranteed by the fact that eigenvalues of $2B$ are all greater than $1$, thus the matrix $2B - \I$ is indeed positive definite.
\end{proof}
The convergence behavior of this dynamics relies largely on the least significant eigenvalue of the matrix $G_FG_W^{-1}$. This is in great similarity with the RIW condition for Poincar{\'e} inequality in statistical models \cite{LiMontufar2018_riccia}. This inspires us to name such efficiency Poincar{\'e} efficiency. For detailed discussions and calculations on Poincar\'e inequalities in statistical models, please refer to the section \ref{sec-Func}. We also illustrate some results of two efficiencies in Gaussian family, whose proof and numerical experiments are delayed to the section \ref{num-exp}. 
\begin{example}[Gaussian distribution]
Suppose we have following dynamics in a Gaussian model $p\lp x; \mu, \sigma \rpz$
\begin{equation*}
    \theta_{t+1} = \theta_t - \frac{1}{t}\nabla_{\theta}^W l(x_t,\theta_t), \quad x_t \sim p\lp x; \mu_*, \sigma_* \rpz.
\end{equation*}
The asymptotic behavior of the covariance matrix for the online Wasserstein natural gradient algorithm is given by
\begin{equation*}
       V_t = \left\{
    \begin{aligned}
        & O\lp t^{-\frac{2}{\sigma_*^2}} \rpz, \qquad \qquad \qquad \qquad \qquad \quad  \ \ \frac{1}{\sigma_*^2} \leq \half,        \\
        & \frac{1}{t}\begin{pmatrix}
        \frac{1}{\lp 2 - \sigma_*^2 \rpz \sigma_*^2} & 0  \\
        0 & \frac{4}{\lp 4 - \sigma_*^2 \rpz \sigma_*^2}
    \end{pmatrix} + O(\frac{1}{t^2}), \qquad \frac{1}{\sigma_*^2} > \half.
    \end{aligned}\right. 
\end{equation*}
\end{example}

\section{Discussions}
In this paper, we introduce the Wasserstein information matrix in statistical models. Similar to the study in information geometry, we turn the geometric aspect of the Wasserstein metric into statistics. Here we generalize the classical concepts such as score function, covariance operator, Cramer-Rao bound, and estimation to the Wasserstein statistics. Several explicit computable examples are provided, including the location-scale family, and the ReLU push-forward family. Also, by comparing both Wasserstein and Fisher information matrices, some new efficiency concepts, such as Wasserstein efficiency and Poincar{\'e} efficiency have been introduced. 

In the future, several natural questions between Fisher and Wasserstein statistics arise. For example, similar to the relation with Fisher information matrices and maximal likelihood estimators, what is the relation between the WIM and the Wasserstein distance estimator? Is there a canonical Wasserstein divergence function for the WIM? What is the corresponding Wasserstein maximal likelihood estimator? Meanwhile, we will apply Wasserstein natural gradient to study stochastic gradient descent algorithms in statistical learning problems. Lastly and most importantly, we have shown that the Wasserstein statistics provide the rigorous statistical advantages in generative models than classical Fisher statistics. We will study the properties of WIMs in clear statistical terms for machine learning models. 

\bibliographystyle{abbrv}
\bibliography{WIM_arxiv}
\newpage

\appendix
\section{Proofs in section \ref{sec2}}\label{supp-2}
\subsection{WIMs and score functions in analytic examples}\label{ana-exp}
\begin{proof}[Proof of WIMs in Gaussian families]
Since we have
\begin{equation*}
\log p(x;\mu,\sigma)=-\frac{(x-\mu)^2}{2\sigma^2}-\log\sigma-\log\sqrt{2\pi},
\end{equation*}
taking derivative, we get
\begin{equation*}
    \begin{aligned}
\nabla_x\log p(x;\mu,\sigma) & =-\frac{x-\mu}{\sigma^2},\\
\frac{\partial}{\partial\mu}\log p(x;\mu,\sigma) & =\frac{x-\mu}{\sigma^2},\quad  \frac{\partial}{\partial\sigma}\log p(x;\mu,\sigma)=\frac{(x-\mu)^2}{\sigma^3}-\frac{1}{\sigma}.
    \end{aligned}
\end{equation*}
In this case, the Possion equation for Wasserstein score functions $(\Phi^W_\mu, \Phi^W_\sigma)$ forms 
\begin{equation*}
\left\{\begin{aligned}
& -\frac{x-\mu}{\sigma^2}\cdot \frac{d}{dx}\Phi^W_\mu+\frac{d^2}{dx^2}\Phi^W_\mu=-\frac{x-\mu}{\sigma^2}, \\
& -\frac{x-\mu}{\sigma^2}\cdot \frac{d}{dx}\Phi^W_\sigma+\frac{d^2}{dx^2}\Phi^W_\sigma=-\frac{(x-\mu)^2}{\sigma^3}+\frac{1}{\sigma}.
\end{aligned}\right.
\end{equation*}
We simply check that $\Phi^W_\mu(x) = x - \mu$ and $\Phi^W_\sigma(x)=\frac{(x-\mu)^2 - \sigma^2}{2\sigma}$ are solutions, and they also satisfy the normalization condition $\EE_{p_\theta} \Phi_i^W = 0$. Thus 
\begin{equation*}
\begin{aligned}
G_W(\mu, \sigma)_{\mu\mu}=& \ \mathbb{E}_{p_{\mu,\sigma}} \lp \frac{d}{dx}\Phi^W_\mu, \frac{d}{dx}\Phi^W_\mu \rpz =\mathbb{E}_{p_{\mu,\sigma}} 1 =1,\\
G_W(\mu, \sigma)_{\mu\sigma}=& \ \mathbb{E}_{p_{\mu,\sigma}} \lp \frac{d}{dx}\Phi^W_\mu, \frac{d}{dx}\Phi^W_\sigma \rpz = \mathbb{E}_{p_{\mu,\sigma}} \lp 1\cdot (-\frac{X-\mu}{2\sigma})\rpz = 0,        \\
G_W(\mu, \sigma)_{\sigma\sigma}=& \ \mathbb{E}_{p_{\mu,\sigma}} \lp \frac{d}{dx}\Phi^W_\sigma, \frac{d}{dx}\Phi^W_\sigma \rpz = \mathbb{E}_{p_{\mu,\sigma}} \lp \frac{X-\mu}{\sigma}\cdot \frac{X-\mu}{\sigma} \rpz = 1.
\end{aligned}
\end{equation*}
\end{proof}
\begin{proof}[Proof of WIMs in exponential families]
We derive results using the closed-form solution in 1-d. The cumulative distribution function satisfies 
\begin{equation*}
F(x;m,\lambda)=\begin{cases}
1-e^{-\lambda (x - m)}& x\geq m,\\
0 & x<m.
\end{cases}
\end{equation*}
Thus 
\begin{equation*}
    \begin{aligned}
\frac{\partial}{\partial \lambda}F(x;m,\lambda)=\begin{cases}
 (x - m) e^{-\lambda (x - m)}&\textrm{$x\geq m$,}\\
0 & x < m.
\end{cases}\\ 
\frac{\partial}{\partial m}F(x;m,\lambda)=\begin{cases}
 \lambda e^{-\lambda (x - m)}&\textrm{$x\geq m$,}\\
0 & x < m.
\end{cases}
    \end{aligned}
\end{equation*}
Then 
\begin{equation*}  
    \begin{aligned}
        \Phi^W_\lambda(x;m,\lambda) = & -\int_m^{x}\frac{1}{p(y;m,\lambda)}\frac{\partial}{\partial\lambda}F(y;m,\lambda)dy + C_1   \\
        = & -\int_m^x\frac{(y - m)}{\lambda}dy + C_1 = \frac{(x - m)^2}{2\lambda} + C_1,          \\
        \Phi^W_m(x;m,\lambda) = & -\int_m^{x}\frac{1}{p(y;m,\lambda)}\frac{\partial}{\partial m}F(y;m,\lambda)dy + C_2 \\ 
        = & -\int_m^x dy + C_2 = (x - m) + C_2.          \\
    \end{aligned}
\end{equation*}
Using the normalization condition, we can decide integration constants appearing above. And inner products between score functions follow:
\begin{equation*}
\begin{split}
G_W(m,\lambda)_{\lambda\lambda} = & \  \mathbb{E}_{p_{m,\lambda}} \lp \frac{d}{dx}\Phi^W_\lambda, \frac{d}{dx}\Phi^W_\lambda \rpz     \\
= & \int_m^{\infty} \frac{(x - m)}{\lambda}\cdot  \frac{(x - m)}{\lambda}\cdot  \lambda e^{-\lambda \lp x - m \rpz}dx \\
= & \ \int_m^{\infty} \frac{(x - m)^2}{\lambda}e^{-\lambda \lp x - m \rpz}dx = \frac{2}{\lambda^4},           \\
G_W(m,\lambda)_{\lambda m} = & \  \mathbb{E}_{p_{m,\lambda}} \lp \frac{d}{dx}\Phi^W_\lambda, \frac{d}{dx}\Phi^W_m \rpz = \int_m^{\infty} \frac{(x - m)}{\lambda}\cdot  \lambda e^{-\lambda \lp x - m \rpz}dx = \frac{1}{\lambda^2},     \\
G_W(m,\lambda)_{mm} = & \ \mathbb{E}_{p_{m,\lambda}} \lp \frac{d}{dx}\Phi^W_m, \frac{d}{dx}\Phi^W_m \rpz = \int_m^{\infty} \lambda e^{-\lambda \lp x - m \rpz}dx = 1.
\end{split}
\end{equation*}
\end{proof}
\begin{proof}[Proof of WIMs in uniform families]
The cumulative distribution function satisfies 
\begin{equation*}
F(x;a,b)=\begin{cases}
1& x> b,\\
\frac{x-a}{b-a} & a\leq x \leq b,\\
0 & x<a.
\end{cases}
\end{equation*}
Thus when $x\in [a,b]$, 
\begin{equation*}
\frac{\partial}{\partial a}F(x;a,b)=\frac{x-b}{(b-a)^2},\quad \frac{\partial}{\partial b}F(x;a,b)=\frac{a-x}{(b-a)^2}.
\end{equation*}
Then 
\begin{equation*}
\begin{aligned}
\Phi^W_a(x;a,b)=&-\int_a^{x}\frac{1}{p(y;a,b)}\frac{\partial}{\partial a}F(y;a,b)dy + C_1 = \frac{(x-a)(a-2b+x)}{2(b-a)} + C_1,\\
\Phi^W_b(x;a,b) = &-\int_a^{x}\frac{1}{p(y;a,b)}\frac{\partial}{\partial b}F(y;a,b)dy + C_2 = \frac{(a-x)^2}{2(b-a)} + C_2,
\end{aligned}
\end{equation*}
where integration constants $C_1,C_2$ can be decided via the normalization condition. Thus
\begin{equation*}
\begin{aligned}
G_W(a,b)_{aa} = & \ \mathbb{E}_{p_{a, b}} \lp \frac{d}{dx}\Phi^W_a, \frac{d}{dx}\Phi^W_a \rpz = \frac{1}{3},\\
G_W(a,b)_{ab} = & \ \mathbb{E}_{p_{a, b}} \lp \frac{d}{dx}\Phi^W_a, \frac{d}{dx}\Phi^W_b \rpz = \frac{1}{6},\\
G_W(a,b)_{bb} = & \ \mathbb{E}_{p_{a, b}} \lp \frac{d}{dx}\Phi^W_b, \frac{d}{dx}\Phi^W_b \rpz = \frac{1}{3}.
\end{aligned}
\end{equation*}
\end{proof}
\begin{proof}[Proof of the WIM in semicircle families]
The cumulative distribution function satisfies 
\begin{equation*}
\begin{split}
F(x + m;m,R)=& \ \int_{-R}^x \frac{2}{\pi R^2}\sqrt{R^2-y^2}dy\\
=& \ \int_{-\frac{\pi}{2}}^{\arcsin(\frac{x}{R})}\frac{2}{\pi R^2}\sqrt{R^2-R^2\sin^2t}d (R\sin t)\\
=& \ \int_{-\frac{\pi}{2}}^{\arcsin(\frac{x}{R})}\frac{2}{\pi R^2} R^2(\cos t)^2 dt\\
=& \ \int_{-\frac{\pi}{2}}^{\arcsin(\frac{x}{R})}\frac{1}{2\pi}\frac{\cos(2t)+1}{2} dt\\
=& \ \frac{1}{\pi}\Big(\frac{\sin (2t)}{2}+t\Big)\Bigg|_{-\frac{\pi}{2}}^{\arcsin\frac{x}{R}}\\
=& \ \frac{1}{\pi}\Big\{\frac{x\sqrt{R^2-x^2}}{R^2}+\arcsin\frac{x}{R}+\frac{\pi}{2}\Big\},
\end{split}
\end{equation*}
where we use a transformation $y=R\sin t$. Thus
\begin{equation*}
\begin{split}
\frac{\partial}{\partial R}F(x + m;m,R)= &\ \frac{1}{\pi}\Big\{\frac{(x\sqrt{R^2-x^2})'R^2-2Rx\sqrt{R^2-x^2}}{R^4}+(\arcsin\frac{x}{R})
'\Big\}\\
=& \ \frac{1}{\pi}\Big\{  \frac{xR(R^2-x^2)^{-\frac{1}{2}}R^2-2Rx\sqrt{R^2-x^2}}{R^4} -\frac{x}{R\sqrt{R^2-x^2}}\Big\}\\
=& \ -\frac{2x\sqrt{R^2-x^2}}{\pi R^3}.
\end{split}
\end{equation*}
Thus
\begin{equation*}
\begin{aligned}
\Phi^W_R(x + m;m,R)=& \ -\int_{-R}^{x}\frac{1}{p(y;m,R)}\frac{\partial}{\partial R}F(y;m,R)dy + C \\
=& \ \int_{-R}^x\frac{y}{R}dy + C \\
=& \ \frac{1}{R}(\frac{x^2}{2}-\frac{R^2}{2}) + C.
\end{aligned}
\end{equation*}
The calculation of the score function associated with the parameter $p$ is the same as before. And we conclude
\begin{equation*}
    \Phi^W_p(x;m,R)= x - m.
\end{equation*}
Thus
\begin{equation*}
\begin{aligned}
    G_W(m,R)_{mm} = & \ \mathbb{E}_{p_{m,R}} \lp \frac{d}{dx}\Phi^W_m, \frac{d}{dx}\Phi^W_m \rpz =  1,            \\
    G_W(m,R)_{mR} = & \ \mathbb{E}_{p_{m,R}} \lp \frac{d}{dx}\Phi^W_R, \frac{d}{dx}\Phi^W_m \rpz = \frac{1}{R^2}\mathbb{E}_{m_{R}} \lp x - m \rpz = 0,            \\
    G_W(m,R)_{RR} = & \ \mathbb{E}_{p_{m,R}} \lp \frac{d}{dx}\Phi^W_R, \frac{d}{dx}\Phi^W_R \rpz = \frac{1}{R^2}\mathbb{E}_{m_{R}} \lp x - m \rpz^2 = \frac{1}{4}.
\end{aligned}
\end{equation*}
\end{proof}
\subsection{The location-scale family}\label{location}
\begin{example}[Location-scale families]
Consider a location-scale family as following: given a probability density function $p(x)$ with $\int_{\RR} p(x)dx = 1$, we define density functions of a location-scale family with a location parameter $m$, and a scale parameter $\lambda$ as
\begin{equation*}
    p(x;m,\lambda) = \frac{1}{\lambda}p(\frac{x - m}{\lambda}),\qquad \lambda > 0. 
\end{equation*}
Most of previously discussed examples belong to this family, except that we do not use location and scale parameters in their parameterizations. We present some geometric formulas in this setting. We further require the original density function to be symmetric according to the location parameter $m$, i.e. $p(x) = p(2m - x)$. Notice that a simple corollary of this assumption is $\EE_{p_{m,\lambda}}x = m$.     \\
\indent
We use the closed-form solution for 1-d model to calculate the score function associated with the location parameter $m$. Thus we have:
\begin{equation*}
    \begin{aligned}
         \frac{\partial}{\partial m}F(x;m,\lambda) = & \frac{\partial}{\partial m} \int_{-\infty}^x p(y;m,\lambda)dy = \frac{\partial}{\partial m} \int_{-\infty}^x \frac{1}{\lambda}p(\frac{y - m}{\lambda})dy         \\
         = & - \frac{\partial}{\partial x} \int_{-\infty}^x \frac{1}{\lambda}p(\frac{y - m}{\lambda})dy = -p(x;m,\lambda).
    \end{aligned}
\end{equation*}
Consequently, the score function associated to the parameter $m$ satisfies
\begin{equation*}
    \Phi^W_m(x;m,\lambda)=-\int_m^{x}\frac{1}{p(y;m,\lambda)}\frac{\partial}{\partial m}F(y;m,\lambda)dy + C_1 = \lp x - m \rpz + C_1,
\end{equation*}
where the integration constant $C_1$ is determined to be 0. Thus we have
\begin{equation*}
	G_W(m,\lambda)_{mm} = \mathbb{E}_{p_{m,\lambda}} \lp \frac{d}{dx}\Phi^W_m, \frac{d}{dx}\Phi^W_m \rpz =  1.
\end{equation*} 
\indent
For the scaling parameter $\lambda$, we use a method of optimal transportation map to determine its score function. Namely, for two smooth distributions $p_1, p_2$ which are absolutely continuous w.r.t. each other, their Wasserstein distance can be obtained by an optimal transportation map $f$, i.e.
\begin{equation*}
    f_* p_1 = p_2, \quad W_2^2\lp p_1, p_2 \rpz = \int_{\cx} \lp f\lp x \rpz - x \rpz^2 p_1\lp x \rpz dx.
\end{equation*}
Assume we have a tangent vector $\frac{\partial p}{\partial \theta}$ and a smooth path $p\lp t \rpz \subset \cp\lp \cx \rpz, t\in\lb -\epsilon, \epsilon \rb$ with $p\lp 0 \rpz = p_0, p'\lp 0 \rpz = \frac{\partial p}{\partial \theta}$. Denote the optimal transportation map between $p\lp 0 \rpz, p\lp \theta \rpz$ as $f\lp x, \theta \rpz$. Then we have following relation between optimal transportation maps and the score function associated with tangent vector $\frac{\partial p}{\partial \theta}$
\begin{equation*}
    \frac{d}{dx}\Phi^W\lp x \rpz = \lim_{\Delta\theta \rightarrow 0}\frac{f\lp x, \Delta \theta \rpz - x}{\Delta \theta}.
\end{equation*}
\par
First, we show that the optimal transportation map between distributions $p(x;m_1,\lambda_1)$ and $p(x;m_2,\lambda_2)$ is given by a linear map:
\begin{equation*}
	l(x) = m_2+\frac{(x-m_1)\lambda_2}{\lambda_1}.
\end{equation*}
As we are working in a location-scale family, it is easy to show that this map pushes $p(x;m_1,\lambda_1)$ forward to $p(x;m_2,\lambda_2)$, i.e. $l_*p_{m_1,\lambda_1} = p_{m_2,\lambda_2}$. Then, we have
\begin{equation*}
      l(x) = \nabla_x \lp m_2\lp x - m_1 \rpz +  \frac{(x-m_1)^2\lambda_2}{2\lambda_1} \rpz. 
\end{equation*}
The function in the bracket is a convex function. Therefore, $l\lp x \rpz$ is exactly the optimal transportation map between these two distributions. 
\par
To calculate the score function correspondent to the tangent vector $\frac{\partial }{\partial \lambda}$, we consider following infinitesimal optimal transportation $p(x;m_1,\lambda_1) \rightarrow p(x;m_1,\lambda_1 + d\lambda)$. By discussions above, the optimal transportation map is given by
\begin{equation*}
    l\lp x \rpz = m_1+\frac{(x-m_1)\lp \lambda_1 + d\lambda \rpz}{\lambda_1} = x + \lp x - m_1 \rpz\frac{d\lambda}{\lambda_1}.
\end{equation*}
Thus the gradient of the score function is given by
\begin{equation*}
	\frac{d}{dx}\Phi^W_{\lambda}\lp x; m_1,\lambda_1 \rpz = \frac{l\lp x \rpz - x}{d\lambda} = \frac{(x-m_1)}{\lambda_1}.
\end{equation*}
The inner product of this tangent vector is given by
\begin{equation*}\label{ls-wasserstein}
    \begin{aligned}
        G_W(m,\lambda)_{\lambda \lambda} = & \mathbb{E}_{p_{m,\lambda}} \lp \frac{d}{dx}\Phi^W_{\lambda}, \frac{d}{dx}\Phi^W_{\lambda} \rpz     \\
        = & \int_{\mathbb{R}} \left(\frac{x - m}{\lambda}\right)^2 p(x;m,\lambda)dx \\
        = &\frac{\mathbb{E}_{p_{m,\lambda}}x^2-2m\mathbb{E}_{p_{m,\lambda}}x+m^2}{\lambda^2}.
    \end{aligned}
\end{equation*}
The gradient of the score function associated to the parameter $\lambda$ (resp. $m$) is odd (resp. even) function when viewing as a function of $x-m$. We conclude that the integration of their product is zero:
\begin{equation*}
	G_W(m,\lambda)_{\lambda m} = \mathbb{E}_{p_{m,\lambda}} \lp \frac{d}{dx}\Phi^W_{\lambda}, \frac{d}{dx}\Phi^W_m \rpz  = \EE_{p_{m,\lambda}}(x - m) = 0.
\end{equation*}
Consequently, WIMs of location-scale families are diagonal matrices, i.e.
\begin{equation*}
    G_W\lp m, \lambda \rpz = \begin{pmatrix}
        1 & 0     \\
        0 & \frac{\mathbb{E}_{p_{m,\lambda}}x^2-2m\mathbb{E}_{p_{m,\lambda}}x+m^2}{\lambda^2}
    \end{pmatrix}.
\end{equation*}

\par
We next explain above closed-form solutions of WIMs by following proposition.  
\begin{proposition}\label{total-geo}
    A location-scale family $p(x;m,\lambda)$ is a totally geodesic family in density manifold under Wasserstein metric.
\end{proposition}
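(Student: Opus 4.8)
The plan is to prove the proposition by exhibiting, for any two members of the family, the $L^2$-Wasserstein geodesic joining them and checking that it never leaves the family. Recall that in $(\mathcal{P}(\mathcal{X}),W_2)$ a constant-speed geodesic between two absolutely continuous densities $\rho_0,\rho_1$ is McCann's displacement interpolation $\rho_t=(T_t)_\ast\rho_0$, where $T_t=(1-t)\,\mathrm{id}+t\,T$ and $T$ is the Brenier optimal transport map with $T_\ast\rho_0=\rho_1$. Since the family is two-dimensional and the geodesics emanating from a fixed point toward nearby points realize every tangent direction, showing that each such connecting geodesic stays in the family is enough to conclude that the second fundamental form vanishes, i.e.\ that the family is totally geodesic.

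First I would import the optimal-map computation already carried out in this section: between $p(\cdot\,;m_1,\lambda_1)$ and $p(\cdot\,;m_2,\lambda_2)$ the optimal transport map is the affine map $T(x)=m_2+\frac{\lambda_2}{\lambda_1}(x-m_1)$, which is the gradient of the convex function $x\mapsto m_2(x-m_1)+\frac{\lambda_2}{2\lambda_1}(x-m_1)^2$, so that Brenier's theorem certifies its optimality. Because the displacement interpolant of an affine map is again affine, I obtain $T_t(x)=a_t x+b_t$ with $a_t=(1-t)+t\lambda_2/\lambda_1>0$ and $b_t=t m_2-t m_1\lambda_2/\lambda_1$.

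Next I would verify that an orientation-preserving affine push-forward preserves the location-scale structure. If $X$ has density $\frac{1}{\lambda_1}p\!\left(\frac{x-m_1}{\lambda_1}\right)$, then $a_tX+b_t$ has density $\frac{1}{a_t\lambda_1}p\!\left(\frac{x-(a_tm_1+b_t)}{a_t\lambda_1}\right)$, which is precisely $p(\cdot\,;m(t),\lambda(t))$ with the parameters evolving linearly, $\lambda(t)=a_t\lambda_1=(1-t)\lambda_1+t\lambda_2$ and $m(t)=a_tm_1+b_t=(1-t)m_1+tm_2$. Hence the whole geodesic $\rho_t$ remains inside the family, and in parameter coordinates it is simply the straight segment $t\mapsto(m(t),\lambda(t))$. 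This is consistent with the diagonal WIM computed above, whose $m$-block is constant and whose $\lambda$-block renders $\lambda$ a Euclidean coordinate along the family.

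The step demanding the most care is upgrading ``connecting geodesics stay in the family'' to the formal assertion of total geodesy. I would fix a base point $(m_1,\lambda_1)$, let $(m_2,\lambda_2)$ vary, and note that the initial velocities $T-\mathrm{id}$ of the geodesics above reproduce exactly the directions $\frac{d}{dx}\Phi^W_m=1$ and $\frac{d}{dx}\Phi^W_\lambda=(x-m_1)/\lambda_1$ found earlier, hence span the full tangent space $T_p\Theta$; since a Wasserstein geodesic is determined by its initial position and velocity, every ambient geodesic tangent to $\Theta$ coincides near $t=0$ with one of these parameter-line geodesics and therefore stays in $\Theta$. A secondary point to address is regularity: displacement interpolation and Brenier's theorem require the endpoints to be absolutely continuous with finite second moment, which holds as soon as the generating density $p$ does; for compactly supported or mildly singular base densities the identical affine-map computation still applies, either by approximation or directly through the closed-form one-dimensional optimal coupling used elsewhere in this section.
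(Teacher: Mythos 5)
Your proposal is correct and follows essentially the same route as the paper: identify the affine monotone map $T(x)=m_2+\tfrac{\lambda_2}{\lambda_1}(x-m_1)$ as the optimal transport map between two members of the family, observe that the displacement interpolation $((1-t)\,\mathrm{id}+tT)_*\rho_1$ is again an affine push-forward and hence stays in the family with linearly interpolated parameters. The extra care you take in certifying optimality via a convex Brenier potential (the paper invokes monotonicity in 1-d) and in upgrading ``connecting geodesics stay inside'' to total geodesy by spanning the tangent directions is sound but does not change the substance of the argument.
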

\begin{proof}
    It suffices to prove that for any two densities $\rho_1 = p(x;m_1,\lambda_1)$ and $\rho_2 = p(x;m_2,\lambda_2)$, a geodesic connecting them lies within this family.
    We compute the optimal transport map $T$ associated with these two measures $\rho_1,\rho_2$, that is:
    \begin{equation*}
        T = \argmin_{T_* \rho_1 = \rho_2} \int_{\RR} \lp T(x) - x \rpz^2 \rho_1(x)dx,
    \end{equation*}
    where $T$ is a map that pushes density $\rho_1$ forward to density $\rho_2$. It is known that a sufficient and necessary condition for an optimal map in 1-d case is that it is a monotone map, i.e. $\lp T(x) - T(y) \rpz\lp x - y \rpz \geq 0$. And in a location-scale family, such map has a closed-form solution, namely:
    \begin{equation*}
        T(x) = \frac{\lambda_2\lp x - m_1 \rpz}{\lambda_1} + m_2.
    \end{equation*}
    \indent
   The geodesic $\gamma(t):[0,1] \rightarrow \cp(\RR)$ between $\rho_1$ and $\rho_2$ follows easily as below by the classical theory of optimal transport
    \begin{equation*}
        \gamma(t) = \lp tx + (1-t)T(x) \rpz_*\rho_1,
    \end{equation*}
    where the push-forward map has a closed-form solution:
    \begin{equation*}
        \begin{aligned}
        tx + (1-t)T(x) = & \ tx + (1 - t)\frac{\lambda_2\lp x - m_1 \rpz}{\lambda_1} + (1 - t)m_2 \\
        = & \  \frac{\lp t\lambda_1 + (1 - t)\lambda_2 \rpz \lp x - m_1 \rpz}{\lambda_1} + (1 - t)m_2 + tm_1.
        \end{aligned}
    \end{equation*}
    And by the same argument, $\gamma(t)$ lies in this location-scale family with parameters given by
    \begin{equation*}
        \lambda_t = t\lambda_1 + (1 - t)\lambda_2,\qquad m_t = (1 - t)m_2 + tm_1.
    \end{equation*}
    Thus we show that geodesics between any two densities in a location-scale family lie in this family. In other words, location-scale families are totally geodesic submanifolds in density manifold.
\end{proof}
\begin{remark}
    This result on totally geodesic of location-scale families is a generalization of the same result on Gaussian families in 1-d. Both proofs of these two cases rely on the fact that optimal transport maps in these families are linear.
\end{remark}
\begin{remark}
    \par
For location-scale families, we also formulate its Fisher scores and Fisher information matrices for comparisons:
\begin{equation}\label{ls-fisher}
    \begin{aligned}
        \Phi_{m}^F(x;m,\lambda) & = \frac{p'}{\lambda p},\quad
        \Phi_{\lambda}^F(x;m,\lambda) = -\frac{1}{\lambda} - \frac{(x - m)p'}{\lambda^2 p},      \\
    	G_F(m,\lambda)_{\lambda \lambda} & = \int_{\mathbb{R}} p \lp \pa_{\lambda} \log p \rpz^2 dx    = \int_{\mathbb{R}} p \lp -\frac{1}{\lambda} - \frac{(x - m)p'}{\lambda^2 p}\rpz^2 dx \\
    	& =  \frac{1}{\lambda^2} \lp 1 + \int_{\RR} \lp \frac{\lp x - m \rpz^2 p'^2}{\lambda^2 p} + \frac{\lp x - m \rpz p'}{\lambda}\rpz dx  \rpz,  \\
    	G_F(m,\lambda)_{mm} & =  \int_{\mathbb{R}} p \lp \pa_{m} \log p \rpz^2 dx = \frac{1}{\lambda^2}\int_{\mathbb{R}} \frac{p'^2}{p} dx,     \\
    	G_F(m,\lambda)_{m\lambda} & =  \int_{\mathbb{R}} p \lp \pa_{m} \log p \rpz \lp \pa_{\lambda} \log p \rpz dx \\
    	& = \int_{\mathbb{R}} p \lp -\frac{p'}{\lambda p} \rpz \lp -\frac{1}{\lambda} - \frac{(x - m)p'}{\lambda^2p} \rpz dx   \\
    	& = \int_{\mathbb{R}}  \frac{(x - m)p'^2}{\lambda^3p} dx.
    \end{aligned}
\end{equation}
WIMs and Fisher information matrices are given by
\begin{equation*}   
    \begin{aligned}
        G_W\lp m, \lambda \rpz & = \begin{pmatrix}
        1 & 0     \\
        0 & \frac{\mathbb{E}_{p_{m,\lambda}}x^2-2m\mathbb{E}_{p_{m,\lambda}}x+m^2}{\lambda^2}
    \end{pmatrix},  \\
    G_F\lp m, \lambda \rpz & = \frac{1}{\lambda^2}\begin{pmatrix}
        \int_{\mathbb{R}} \frac{p'^2}{p} dx & \int_{\mathbb{R}}  \frac{(x - m)p'^2}{\lambda p} dx     \\
        \int_{\mathbb{R}}  \frac{(x - m)p'^2}{\lambda p} dx & 1 + \int_{\RR} \lp \frac{\lp x - m \rpz^2 p'^2}{\lambda^2 p} + \frac{\lp x - m \rpz p'}{\lambda}\rpz dx 
    \end{pmatrix},
\end{aligned}
\end{equation*}
which illustrates that WIMs are simpler than Fisher information matrices in location-scale families.
\end{remark}
\end{example}
\section{Functional inequalities via information matrices}\label{sec-Func}
\par

In this section, we explore connections between information matrices and functional inequalities such as log-Sobolev inequalities (LSIs) and Poincar\'e inequalities (PIs) in statistical models. In section \ref{Efficiency}, we show that these inequalities are important for the study of statistical efficiency properties.
\subsection{Classical functional inequalities}
\par
Before working in statistical models, we first give a summary of relations among PIs, LSIs and dynamical quantities on density manifold.

Consider the relative entropy (KL-divergence) defined on density manifold:
    \begin{equation*}
        H(\mu|\nu) = \int_{\mathcal{X}}  \log \frac{\mu(x)}{\nu(x)} \mu(x)dx, \qquad \mu \in \mathcal{P}(\mathcal{X}).
    \end{equation*}
Here, we use a notation $H\lp \cdot | \cdot \rpz$ in order to be consistent with literature. We recall the definition of log-Sobolev inequality as below.
\begin{definition}[Log-Sobolev inequality]
    A probability measure $\nu$ is said to satisfy a log-Sobolev inequality with constant $\alpha > 0$ (in short: LSI$(\alpha)$) if we have:
    \begin{equation*}
        H(\mu|\nu) < \frac{1}{2\alpha}I(\mu|\nu), \qquad \mu \in \mathcal{P}(\mathcal{X}),
    \end{equation*}
    where the quantity $I(\mu|\nu)$ is the so-called Fisher-information functional 
    \begin{equation*}
        I(\mu|\nu) = \int_{\mathcal{X}} \lv \nabla_x \log \frac{\mu(x)}{\nu(x)} \rv^2 \mu(x) dx, \qquad \mu \in \mathcal{P}(\mathcal{X}).
    \end{equation*}
\end{definition}
\begin{remark}
    If we assume that $\mu$ is absolutely continuous w.r.t. the reference measure $\nu$ and define function $h$ on $\mathcal{X}$ as:
    \begin{equation*}
        \mu(x) = \frac{h(x)\nu(x)}{\int_{\mathcal{X}} h(x) \nu(x)dx},
    \end{equation*}
    then above definition of LSI translates to:
    \begin{equation*}
        \begin{aligned}
           & \lp H(\mu|\nu) \int_{\mathcal{X}}h(x)\nu(x) dx \rpz \\= &   \int_{\mathcal{X}}h(x) \log h(x) \nu(x) dx  - \lp \int_{\mathcal{X}}h(x) \nu(x) dx \rpz \log \lp \int_{\mathcal{X}}h(x) \nu(x) dx \rpz   \\
            \leq & \frac{1}{2\alpha}\int_{\mathcal{X}} \frac{\lv \nabla_x h(x) \rv^2}{h(x)} \nu(x) dx = \frac{1}{2\alpha}\lp I(\mu|\nu) \int_{\mathcal{X}}h(x) \nu(x) dx \rpz.
        \end{aligned}
    \end{equation*}
    The middle inequality is a more familiar definition of LSI$\lp \alpha \rpz$. By linearizing above formula with $h = 1 + \epsilon f, \epsilon \rightarrow 0$, we get the classical definition of PI$\lp \alpha \rpz$
    \begin{equation*}
        \int_{\mathcal{X}}f^2(x) \nu(x) dx   
        \leq \frac{1}{\alpha}\int_{\mathcal{X}} \lv \nabla_x f(x) \rv^2 \nu(x) dx, \qquad \int_{\mathcal{X}}f(x) \nu(x) dx  = 0.
    \end{equation*}
\end{remark}
\begin{definition}[Poincar\'e inequalities]
    A probability measure $\nu$ is said to satisfy a Poincar\'e inequalities with constant $\alpha > 0$ (in short: PI$(\alpha)$) if we have:
    \begin{equation*}
        \int_{\mathcal{X}}f^2(x) \nu(x) dx   
        \leq \frac{1}{\alpha}\int_{\mathcal{X}} \lv \nabla_x f(x) \rv^2 \nu(x) dx, \qquad \forall f, \  s.t. \int_{\mathcal{X}}f(x) \nu(x) dx  = 0.
    \end{equation*}
\end{definition}
\par
A sufficient criterion that guarantees LSIs and PIs is related to information matrices (operators or metrics in infinite dimension case) $G_W$.
\begin{proposition}\label{Hess-cond}
Denote $\Hessian_W H(\mu|\nu), G_W(\mu)$ two bi-linear forms correspondent to Hessian of the relative entropy and Wasserstein metric. 
\par
(1) Suppose $\Hessian_W H(\mu|\nu) - 2\alpha G_W(\mu)$ is a semi-positive definite bi-linear form on the Hilbert space $T_{\mu}\cp\lp \cx \rpz$, $\forall \mu \in \cp\lp \cx \rpz$. Then LSI$\lp \alpha \rpz$ holds for $\nu$.
\par
(2) Suppose $\Hessian_W H(\nu|\nu) - 2\alpha G_W(\nu)$ is a semi-positive definite bi-linear form on the Hilbert space $T_{\nu}\cp\lp \cx \rpz$. Then PI$\lp \alpha \rpz$ holds for $\nu$.
\end{proposition}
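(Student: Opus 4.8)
The plan is to recognize Proposition~\ref{Hess-cond} as the transcription, into the present notation, of the Otto--Villani and Bakry--\'Emery correspondence between geodesic convexity of the relative entropy and functional inequalities. The whole argument lives in the Otto calculus on $\lp\cp\lp\cx\rpz,g_W\rpz$, where $H(\cdot|\nu)$ is a smooth functional whose $g_W$-gradient flow is the Fokker--Planck equation $\partial_t\mu=\nabla_x\cdot\lp\mu\nabla_x\log\frac{\mu}{\nu}\rpz$. Two identities drive everything. First, the de Bruijn/energy-dissipation identity: the squared $g_W$-norm of the Wasserstein gradient of the entropy equals the Fisher-information functional, $\lv\mathrm{grad}_W H(\mu|\nu)\rv^2_{g_W}=I(\mu|\nu)$, and $\frac{d}{dt}H(\mu_t|\nu)=-I(\mu_t|\nu)$ along the flow. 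Second, $\nu$ is the unique zero of $H(\cdot|\nu)$, so $H(\nu|\nu)=0$ and $\mathrm{grad}_W H(\nu|\nu)=0$.

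For part~(1) I would run the HWI argument. Fix $\mu$ and let $\lp\mu_s\rpz_{s\in[0,1]}$ be the constant-speed $W_2$-geodesic with $\mu_0=\mu$, $\mu_1=\nu$, so that $\lv\dot\mu_s\rv^2_{g_W}=W_2(\mu,\nu)^2$. Set $\phi(s)=H(\mu_s|\nu)$; then $\phi(1)=0$, $\phi'(0)=\la\mathrm{grad}_W H(\mu|\nu),\dot\mu_0\ra_{g_W}$ with $|\phi'(0)|\le\sqrt{I(\mu|\nu)}\,W_2(\mu,\nu)$ by Cauchy--Schwarz, while the hypothesis, holding at every $\mu_s$, gives $\phi''(s)\ge 2\alpha\,W_2(\mu,\nu)^2$. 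Integrating the convexity estimate on $[0,1]$ and using $\phi(1)=0$ yields the HWI-type bound $H(\mu|\nu)\le\sqrt{I(\mu|\nu)}\,W_2(\mu,\nu)-\alpha\,W_2(\mu,\nu)^2$; optimizing the right-hand side over the scalar $W_2(\mu,\nu)$ produces $H(\mu|\nu)\le\frac{1}{4\alpha}I(\mu|\nu)$, which is in fact stronger than, and in particular implies, LSI$(\alpha)$. Equivalently one may differentiate $I(\mu_t|\nu)$ along the gradient flow, use $\frac{d}{dt}\lv\mathrm{grad}_W H\rv^2_{g_W}=-2\,\Hessian_W H(\mathrm{grad}_W H,\mathrm{grad}_W H)\le-4\alpha\,I$, and integrate the entropy dissipation from $0$ to $\infty$.

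For part~(2) I would linearize at the equilibrium, since the hypothesis concerns only $\nu$. The relevant object is the Hessian of the entropy at its minimizer, which by the Bochner/$\Gamma_2$ computation of Otto calculus is $\Hessian_W H(\nu|\nu)(\psi,\psi)=\int_{\cx}\Gamma_2(\psi)\,d\nu$ on tangent potentials $\psi$, while $G_W(\nu)(\psi,\psi)=\int_{\cx}\lv\nabla_x\psi\rv^2 d\nu$ is the Dirichlet form. Writing the generator $\mathcal{L}\psi=\frac{1}{\nu}\nabla_x\cdot(\nu\nabla_x\psi)$, the identities $\int\Gamma_2(\psi)d\nu=\int(\mathcal{L}\psi)^2 d\nu$ and $\int\lv\nabla_x\psi\rv^2 d\nu=-\int\psi\,\mathcal{L}\psi\,d\nu$ turn the hypothesis $\Hessian_W H(\nu|\nu)\succeq 2\alpha\,G_W(\nu)$ into the integrated Bakry--\'Emery inequality $\int(\mathcal{L}\psi)^2 d\nu\ge 2\alpha\int(-\psi\,\mathcal{L}\psi)\,d\nu$. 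Expanding in the $L^2(\nu)$-eigenbasis of $-\mathcal{L}$ with eigenvalues $0=\lambda_0<\lambda_1\le\cdots$, this reads $\sum_k\lambda_k^2 c_k^2\ge 2\alpha\sum_k\lambda_k c_k^2$ for every $\psi=\sum_k c_k e_k$, which is equivalent to the spectral-gap bound $\lambda_1\ge 2\alpha$. The gap bound is exactly the Poincar\'e inequality: for $\int f\,d\nu=0$ one has $\int f^2 d\nu\le\frac{1}{\lambda_1}\int\lv\nabla_x f\rv^2 d\nu\le\frac{1}{2\alpha}\int\lv\nabla_x f\rv^2 d\nu$, giving PI$(\alpha)$ with room to spare. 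This also matches the paper's observation that PI is the $h=1+\epsilon f$ linearization of LSI.

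The main obstacle is not the formal Riemannian bookkeeping above but its rigorous justification. One must ensure that $H(\cdot|\nu)$ is twice differentiable along $W_2$-geodesics and that the displacement-convexity and Otto-calculus identities (the gradient-norm formula, the second-variation/$\Gamma_2$ formula, and the integration by parts against $\nu$) actually hold, which requires regularity and integrability hypotheses: $\nu$ absolutely continuous with a smooth, sufficiently fast-decaying density so that $I(\mu|\nu)$ and $\int\Gamma_2(\psi)\,d\nu$ are finite and the boundary terms vanish. I would handle this by invoking McCann's displacement-convexity theory and the standard regularization and approximation arguments of Otto--Villani, first establishing the inequalities for measures on which all functionals are finite and smooth, and then passing to general $\mu$ by density together with the lower semicontinuity of $H$ and $I$.
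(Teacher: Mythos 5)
Your proof is correct, and for part (1) it takes a genuinely different route from the paper's. The paper argues along the Wasserstein gradient flow of $H(\cdot|\nu)$ in Bakry--\'Emery style: from the dissipation identity $\frac{d}{dt}H(\mu_t|\nu)=-I(\mu_t|\nu)$ and the Hessian bound it derives $\frac{d^2}{dt^2}H\ge-2\alpha\frac{d}{dt}H$ and integrates from $t$ to $\infty$ to get $I\ge 2\alpha H$, which is exactly LSI$(\alpha)$ in the paper's normalization; this requires knowing that the flow exists for all time and converges to $\nu$. You instead integrate displacement convexity along the $W_2$-geodesic joining $\mu$ to $\nu$ to obtain the HWI inequality and then optimize, landing on $H\le\frac{1}{4\alpha}I$. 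That route buys a sharper constant (the hypothesis $\Hessian_W H\ge 2\alpha G_W$ is $K$-convexity with $K=2\alpha$, for which $\frac{1}{2K}=\frac{1}{4\alpha}$ is the standard LSI constant) and replaces the long-time analysis of the flow by regularity of $H$ along geodesics, using the full strength of hypothesis (1) at every interpolant. For part (2) the two arguments are essentially the same linearization at the equilibrium along $\rho(\epsilon)=\nu(1+\epsilon f)$: the paper phrases it as a L'H\^opital computation on the ratio of the second-order Taylor expansions of $H$ and $I$, while you diagonalize the generator and read off the spectral gap $\lambda_1\ge 2\alpha$, again obtaining a constant ($\frac{1}{2\alpha}$ versus the paper's $\frac{1}{\alpha}$) stronger than what is claimed. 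Your final paragraph correctly identifies the regularity and integrability issues (twice-differentiability along geodesics, lower semicontinuity, vanishing boundary terms); the paper leaves exactly the same points implicit, so nothing essential is missing from your argument relative to the paper's.
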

\begin{proof}
First, we prove the result concerned with LSIs. We compute the gradient of the relative entropy w.r.t. Wasserstein metric, which is given by:
\begin{equation*}  
    \begin{aligned}
        \grad_W H(\mu|\nu) = & - \nabla \cdot \left( \mu \nabla \frac{\delta}{\delta \mu}H(\mu|\nu) \right) = - \nabla \cdot \left( \mu \nabla \log \frac{\mu(x)}{\nu(x)} \right),
    \end{aligned}
\end{equation*}
where $\frac{\delta}{\delta \mu}$ refers to the $L^2$ functional derivative. Thus it is easy to obtain the relative entropy dissipation along the gradient flow as:
\begin{equation}\label{entro-diss}
    \begin{aligned}
    & \ \frac{d}{dt} H(\mu|\nu) \\
    = & \ - g_W \lp \grad_W H(\mu|\nu), \grad_W H(\mu|\nu) \rpz \\
    = & \ - \int_{\mathcal{X}} \lv \nabla_x \log \frac{\mu(x)}{\nu(x)} \rv^2 \mu(x) dx = - I(\mu|\nu).
    \end{aligned}
\end{equation}
Using the assumption, we have:
\begin{equation*}
    \begin{aligned}
        \frac{d^2}{dt^2} H(\mu_t|\nu) = & \  \Hessian_W H(\mu_t|\nu)\lp \grad_W H(\mu_t|\nu) , \grad_W H(\mu_t|\nu) \rpz           \\
        \geq & \ 2\alpha G_W(\mu_t) \lp \grad_W H(\mu_t|\nu) , \grad_W H(\mu_t|\nu) \rpz \\
        = & \  - 2\alpha \frac{d}{dt} H(\mu_t|\nu),
    \end{aligned}
\end{equation*}
from which LSI$(\alpha)$ holds via integrating the above formula, i.e.
\begin{equation*}
    \begin{aligned}
        & \ I(\mu_t|\nu) = I(\mu_t|\nu) - I(\nu|\nu)           \\
        = & \ \int_t^{\infty} \lp  \frac{d^2}{dt^2} H(\mu_{\tau}|\nu) \rpz d\tau      \\
        \geq & \ 2\alpha\int_t^{\infty} \lp  -\frac{d}{dt} H(\mu_{\tau}|\nu) \rpz d\tau       \\
        = & \ 2\alpha\lp H(\mu_t|\nu) - H(\nu|\nu) \rpz          \\
        = & \ 2\alpha H(\mu_t|\nu),
    \end{aligned}
\end{equation*}
where we use the fact that this gradient flow $\mu_t$ converges to $\nu$ and $H(\nu|\nu) = I(\nu|\nu) = 0$.
\par
To prove the conclusion of Poincar\'e inequalities, we consider a path in density manifold, i.e $\rho\lp \epsilon \rpz = \nu\lp 1 + \epsilon f \rpz, \int_{\mathcal{X}}f(x) \nu(x) dx  = 0$. Since we have
\begin{equation*}
    \begin{aligned}
        H\lp \rho\lp \epsilon \rpz| \nu \rpz & = \frac{\epsilon^2}{2}\int_{\mathcal{X}}f^2(x) \nu(x) dx + o\lp \epsilon^2 \rpz,           \\  
    - \frac{d}{dt} H(\rho\lp \epsilon \rpz|\nu) = I\lp \rho\lp \epsilon \rpz| \nu \rpz & = \epsilon^2\int_{\mathcal{X}} \lv \nabla_x f(x) \rv^2 \nu(x) dx + o\lp \epsilon^2 \rpz.
    \end{aligned}
\end{equation*}
Consequently, we obtain
\begin{equation*}
    \begin{aligned}
        & \ \frac{\int_{\mathcal{X}}f^2(x) \nu(x) dx}{\int_{\mathcal{X}} \lv \nabla_x f(x) \rv^2 \nu(x) dx}  \\
        = & \ \frac{1}{2}\lim_{\epsilon \rightarrow 0} - \frac{H\lp \rho\lp \epsilon \rpz| \nu \rpz}{\frac{d}{d\epsilon} H(\rho\lp \epsilon \rpz|\nu)}      \\
        = & \ \frac{1}{2}\lim_{\epsilon \rightarrow 0} - \frac{\frac{d}{d\epsilon}H\lp \rho\lp \epsilon \rpz| \nu \rpz}{\frac{d^2}{d\epsilon^2} H(\rho\lp \epsilon \rpz|\nu)}      \\
        = & \ \frac{1}{2}\lim_{\epsilon \rightarrow 0} \frac{G_W\lp \rho\lp 0 \rpz \rpz \lp \frac{d}{d\epsilon}\rho\lp 0 \rpz, \frac{d}{d\epsilon}\rho\lp 0 \rpz \rpz}{\Hessian_W H(\rho\lp 0 \rpz|\nu)\lp \frac{d}{d\epsilon}\rho\lp 0 \rpz, \frac{d}{d\epsilon}\rho\lp 0 \rpz \rpz}      \\ 
        \leq & \ \frac{1}{\alpha},
    \end{aligned}
\end{equation*}
where we use L'Hopital's rule in second equality and the third equality holds because of the assumption that $\Hessian_W H(\nu|\nu) - 2\alpha G_W(\nu)$ is semi-definite.
\end{proof}
\begin{remark}
    With the help of \eqref{entro-diss}, readers can recognize that LSI guarantees a global exponential convergence of the gradient flow of the relative entropy $H\lp \cdot | \nu \rpz$. Indeed, suppose $\mu_t$ is a gradient flow of $H(\cdot|\nu)$ starting from $\mu_0$, then we have:
    \begin{equation*}
        \begin{aligned}
            H(\mu_t|\nu) & \leq e^{-2\alpha t}H(\mu_0|\nu),\qquad \mu_0 \in \mathcal{P}(\mathcal{X})    \text{ (LSI$\lp \alpha \rpz$)} .
        \end{aligned}
    \end{equation*}
    While intuitively speaking, a PI can be viewed as an infinitesimal version of a LSI, that is to consider the dynamics in a neighborhood of the optimal value.
\end{remark}

\subsection{LSIs and PIs in families}
\par
Now, it is clear that PIs and LSIs are related to density manifold. Here, we attempt to find those counterparts in statistical models, i.e. submanifolds.
\par
Now, we fix a model $\Theta \subset \mathcal{P}(\mathcal{X})$ with metric given by $G_W$. The relative entropy $H\lp \cdot|\nu \rpz$ is indeed a restriction of global functional to this family. And we furthermore require the reference measure $\nu$ to lie in this family, i.e. $\nu = p_{\theta_*},\theta_* \in \Theta$. We use  $\wtd {}$ to distinguish constraint cases (statistical models) from the global situation (density manifold). Recall that the Fisher information functional is merely the relative entropy dissipation along a gradient flow. Thus we have
\begin{equation}
    \begin{aligned}
        \wtd I(p_{\theta_t}|p_{\theta_*}) =& - \frac{d}{dt}\wtd H(p_{\theta_t}|p_{\theta_*})\\
        = & \  g_W\lp \grad_W \wtd H(p_\theta|p_{\theta_*}), \grad_W \wtd H(p_\theta|p_{\theta_*}) \rpz        \\
        = & \ \nabla_{\theta} \wtd H^T \lp \wtd G_W^{-1} \rpz^T \wtd G_W \wtd  G_W^{-1}\nabla_{\theta} \wtd H \\
        = & \  \nabla_{\theta} \wtd H^T \wtd G_W^{-1}\nabla_{\theta} \wtd H,
    \end{aligned}
\end{equation}
where we use a fact
\begin{equation*}
    \grad_W \wtd H(p_\theta|p_{\theta_*}) = \wtd G_W^{-1}\nabla_{\theta} \wtd H.
\end{equation*}
\begin{definition}[LSI in family]
    Consider a statistical model $p: \mathcal{X} \times \Theta \rightarrow \mathbb{R}$, a probability measure $p_{\theta_*}$ is said to satisfy LSI$(\alpha)$ in $\Theta$ with constant $\alpha > 0$ (in short: LSI$(\alpha)$) if we have:
    \begin{equation*}
        \wtd H(p_\theta|p_{\theta_*}) < \frac{1}{2\alpha}\wtd I(p_\theta|p_{\theta_*}), \qquad \theta \in \Theta.
    \end{equation*}
\end{definition}
\par
Using information matrices, we seek a sufficient condition for LSIs and PIs as proposition \ref{Hess-cond}:
\begin{equation*}
    \Hessian_W \wtd H(p_\theta|p_{\theta_*}) \geq 2\alpha \wtd G_W(\theta),
\end{equation*}
where we have to take care that the Hessian on LHS is calculated in a submanifold instead of density manifold. Fisher information matrix also comes into this picture, via a decomposition of the Hessian term $\Hessian_W \wtd H(p_\theta|p_{\theta_*})$. This point is known as the Ricci-information-Wasserstein (RIW) condition.
\begin{theorem}[RIW-condition]\label{RIW}
    The information matrices criterion for LSI$\lp \alpha \rpz$ of distribution $p_{\theta*}$ is given by:
    \begin{equation*}
        G_F\lp \theta \rpz +  \nabla_{\theta}^2p_\theta\log \frac{p_\theta}{p_{\theta_*}} - \Gamma^W \nabla_{\theta}\wtd H(p_\theta|p_{\theta_*}) \geq 2\alpha G_W(\theta),
    \end{equation*}
    where $\Gamma^W$s are Christoffel symbols in Wasserstein statistical model $\Theta$, while for PI$\lp \alpha \rpz$ of distribution $p_{\theta*}$ can be written as:
    \begin{equation*}
        G_F\lp \theta \rpz +  \nabla_{\theta}^2p_\theta\log \frac{p_\theta}{p_{\theta_*}} \geq 2\alpha G_W(\theta).
    \end{equation*}
\end{theorem}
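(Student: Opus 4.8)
The plan is to read the sufficient condition of Proposition~\ref{Hess-cond} on the submanifold $\Theta$ and then expand the Riemannian Hessian $\Hessian_W \wtd H(p_\theta|p_{\theta_*})$ in the coordinates $\theta$. Restricting Proposition~\ref{Hess-cond} to the family, LSI$(\alpha)$ for $p_{\theta_*}$ follows once $\Hessian_W \wtd H(p_\theta|p_{\theta_*}) \succeq 2\alpha \wtd G_W(\theta)$ holds for every $\theta$, while PI$(\alpha)$ only requires this inequality at the reference point $\theta=\theta_*$. Thus the theorem reduces to computing $\Hessian_W\wtd H$ and reading off the two regimes.

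First I would use the coordinate expression of a Riemannian Hessian on $(\Theta,\wtd G_W)$, namely
\begin{equation*}
\Hessian_W \wtd H(p_\theta|p_{\theta_*}) = \nabla_\theta^2 \wtd H - \Gamma^W \nabla_\theta \wtd H,
\end{equation*}
where $\nabla_\theta^2 \wtd H$ is the ordinary Hessian in the parameters and $\Gamma^W$ are the Christoffel symbols of the pulled-back metric $\wtd G_W$. This isolates the Christoffel correction, which is exactly the term distinguishing the two inequalities. Next I would compute $\nabla_\theta^2 \wtd H$ by differentiating $\wtd H(p_\theta|p_{\theta_*})=\int_{\cx} p(x;\theta)\log\frac{p(x;\theta)}{p(x;\theta_*)}dx$ under the integral sign. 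Differentiating once and using $\int_{\cx}\frac{\pa}{\pa\theta_i}p\,dx=0$ removes the normalization contribution and leaves $\frac{\pa}{\pa\theta_i}\wtd H=\int_{\cx}\frac{\pa}{\pa\theta_i}p\,\log\frac{p_\theta}{p_{\theta_*}}dx$. A second differentiation then splits into the term $\int_{\cx}\frac{\pa^2}{\pa\theta_i\pa\theta_j}p\,\log\frac{p_\theta}{p_{\theta_*}}dx$, which is the quantity written $\nabla_\theta^2 p_\theta\log\frac{p_\theta}{p_{\theta_*}}$, plus $\int_{\cx}\frac{\pa}{\pa\theta_i}p\,\frac{\pa}{\pa\theta_j}\log p\,dx=\int_{\cx}\frac{1}{p}\frac{\pa}{\pa\theta_i}p\,\frac{\pa}{\pa\theta_j}p\,dx$, which is precisely $G_F(\theta)_{ij}$.

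Combining these, I obtain
\begin{equation*}
\Hessian_W \wtd H(p_\theta|p_{\theta_*}) = G_F(\theta) + \nabla_\theta^2 p_\theta\log\frac{p_\theta}{p_{\theta_*}} - \Gamma^W \nabla_\theta \wtd H,
\end{equation*}
so the inequality $\Hessian_W\wtd H\succeq 2\alpha\wtd G_W$ is exactly the stated LSI condition. For PI I would evaluate at $\theta=\theta_*$: since $\log\frac{p_{\theta_*}}{p_{\theta_*}}=0$, the gradient $\nabla_\theta\wtd H(p_{\theta_*}|p_{\theta_*})=0$ vanishes, so the Christoffel correction $\Gamma^W\nabla_\theta\wtd H$ drops out, leaving the stated PI form.

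The main obstacle is not the algebra but making the two geometric steps rigorous. One must justify that the intrinsic Hessian on the submanifold $(\Theta,\wtd G_W)$ is the object entering Proposition~\ref{Hess-cond} (i.e.\ that its entropy-dissipation argument runs verbatim for the \emph{constrained} gradient flow), and that the connection induced on $\Theta$ produces exactly the Christoffel symbols $\Gamma^W$ of $\wtd G_W$. The cleanest route is to invoke the identity $\grad_W \wtd H(p_\theta|p_{\theta_*}) = \wtd G_W^{-1}\nabla_\theta\wtd H$ already established before the LSI-in-family definition: its second variation along the constrained flow yields precisely $\nabla_\theta^2\wtd H-\Gamma^W\nabla_\theta\wtd H$, matching the dissipation computation \eqref{entro-diss}. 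Alongside this, one must control the interchange of $\nabla_\theta$ with $\int_{\cx}$, which requires integrability of $p(x;\theta)$ and $\log\frac{p_\theta}{p_{\theta_*}}$ and their derivatives uniformly in $\theta$.
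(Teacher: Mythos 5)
Your proposal is correct and takes essentially the same route the paper intends: the paper states Theorem~\ref{RIW} without writing out a proof, but your reduction to Proposition~\ref{Hess-cond} on the submanifold, the coordinate formula $\lp \Hessian f \rpz_{ij}=\pa_i\pa_j f-\Gamma_{ij}^{k(W)}\pa_k f$ (which the paper itself invokes in the Gaussian example), and the decomposition $\nabla_\theta^2\wtd H = G_F(\theta)+\nabla_\theta^2 p_\theta\log\frac{p_\theta}{p_{\theta_*}}$ obtained by differentiating under the integral are exactly the ingredients the surrounding text alludes to. Your observation that the Christoffel correction drops for PI because $\nabla_\theta\wtd H(p_{\theta_*}|p_{\theta_*})=0$ at the reference point is likewise the intended reading of the second inequality.
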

\begin{remark}
    It can be seen that the condition for log-Sobolev inequalities is much more complicated than that of Poincar{\'e} inequalities. For LSIs require a global convexity of the entropy while PIs only correspond to local behavior at the minimum. The most significant change takes place in the Hessian term of entropy, where Wasserstein Christoffel symbols come in.
\end{remark}

\subsection{Examples in 1-d Family}]\label{ex-func}
Both LSIs and PIs can be proved by using Wasserstein and Fisher information matrices. Previously, we have done geometric computations on metric tensor and Hessian of the entropy. This prepares ingredients for us to establish inequalities in families of probability distributions. In this section, we utilize previous calculations to obtain concrete bounds on these functional inequalities.

\begin{example}[Gaussian distribution]      
Recall that for a Gaussian distribution with mean value $\mu$ and standard variance $\sigma$, the Wasserstein and Fisher information matrices are given by:
\begin{equation*}
    G_W(\mu,\sigma)  = \begin{pmatrix}
	1 & 0		\\
	0 & 1
	\end{pmatrix}, \qquad 
    G_F(\mu,\sigma) = \begin{pmatrix}
			\frac{1}{\sigma^2} & 0		\\
			0 & \frac{2}{\sigma^2}
	\end{pmatrix}.
\end{equation*}
The entropy and the relative entropy defined on this model are provided by:
\begin{equation*}
    \begin{aligned}
        \wtd H(\mu,\sigma) & = - \half \log 2\pi - \log \sigma - \half ,           \\
        \wtd H(\mu,\sigma|p_*) & = - \log \sigma + \log \sigma_* - \half + \frac{\sigma^2 + \lp \mu - \mu_* \rpz^2}{2\sigma_*^2},\qquad p_* \sim p_{\mu_*,\sigma_*}.
    \end{aligned}
\end{equation*}
We can calculate Wasserstein gradients associated with these two functionals:
\begin{equation*}
    \begin{aligned}
        \nabla_{\mu,\sigma}^W \wtd H(\mu,\sigma) = \left(
            \begin{aligned}
                & 0   \     \\
                - & \frac{1}{\sigma} \ 
            \end{aligned}\right),   
\quad        \nabla_{\mu,\sigma}^W \wtd H(\mu,\sigma|p_*) = \left(
            \begin{aligned}
                & \frac{\mu - \mu_*}{\sigma_*^2}     \   \\
                - & \frac{1}{\sigma} + \frac{\sigma}{\sigma_*^2} \ 
            \end{aligned}\right),
    \end{aligned}
\end{equation*}
with the correspondent Fisher information functionals as:
\begin{equation*}
    \begin{aligned}
        \wtd I(\mu,\sigma) & = \frac{1}{\sigma^2},          \\
        \wtd I(\mu,\sigma|p_*) & = \frac{\lp \mu - \mu_* \rpz^2}{\sigma_*^4} + \lp - \frac{1}{\sigma} + \frac{\sigma}{\sigma_*^2} \rpz^2.
    \end{aligned}
\end{equation*}
Thus, the LSI$\lp \alpha \rpz$ for Gaussian $p_{\mu_*,\sigma_*}$ is given by
\begin{equation*}
    \begin{aligned}
        \frac{\lp \mu - \mu_* \rpz^2}{\sigma_*^4} + \lp - \frac{1}{\sigma} + \frac{\sigma}{\sigma_*^2} \rpz^2 & \geq 2\alpha \lp - \log \sigma + \log \sigma_* - \half + \frac{\sigma^2 + \lp \mu - \mu_* \rpz^2}{2\sigma_*^2} \rpz.
    \end{aligned}
\end{equation*}
\par
Next, we move onto the derivation of the RIW condition. It suffices to consider a relation between $\wtd G_W, \Hessian_W \wtd H$ at each point in a statistical model. Recall the formula for Hessian in Riemannian geometry:
\begin{equation*}
    \lp \Hessian f \rpz_{ij} = \pa_i\pa_j f - \Gamma_{ij}^{k(W)} \pa_k f,
\end{equation*}
where $\Gamma^{W}$s are Christoffel symbols in Wasserstein geometry. In Wasserstein Gaussian model where the metric is Euclidean, Christoffel symbols vanish, i.e. $\Gamma^{W} = 0$. Thus we have:
\begin{equation*}
    \Hessian_W \wtd H\lp \mu, \sigma \rpz = \begin{pmatrix}
	0 & 0		\\
	0 & \frac{1}{\sigma^2}
	\end{pmatrix}, \qquad \Hessian_W \wtd H\lp \mu, \sigma | p_{*} \rpz = \begin{pmatrix}
	\frac{1}{\sigma_*^2} & 0		\\
	0 & \frac{1}{\sigma^2} + \frac{1}{\sigma_*^2}
	\end{pmatrix}.
\end{equation*}

\par
For a gradient flow of the relative entropy w.r.t. a Gaussian $p_{\mu_*, \sigma_*}$, we conclude that
\begin{equation*}
    \Hessian_W \wtd H(\mu,\sigma|p_{\theta_*}) \geq \lp \frac{1}{\sigma_*^2} \rpz G_W(\mu,\sigma),
\end{equation*}
since $G_W(\mu,\sigma)$ is exactly an identity matrix. In other words, the Gaussian $p_{\mu_*,\sigma_*}$ satisfies a LSI$\lp \frac{1}{2\sigma_*^2}\rpz$ in a Gaussian model. Notice this result coincides with the one in global case.
\par
Next, for the gradient flow of the entropy $\wtd H\lp \cdot \rpz$, we do not have a satisfying constant $\alpha$ such that the Hessian condition proposition \ref{Hess-cond} holds. For $\Hessian_W \wtd H(\mu,\sigma)$ matrix has an eigenvalue $0$. 
Despite of this, we have:
\begin{equation*}
    \grad_W \wtd H(\mu,\sigma) = G_W^{-1}\nabla_{\mu,\sigma} \wtd H(\mu,\sigma) = \nabla_{\mu,\sigma}\wtd H(\mu,\sigma),
\end{equation*}
whose $\mu$ component always vanishes. Thus the gradient direction of $\wtd H(\cdot)$ always coincides with $\sigma$ direction, in which we have eigenvalue's bound: $\eig_{\sigma}(\wtd H) \geq \frac{1}{\sigma^2} \eig_{\sigma}(G_W)$. This refers to that eigenvalues of two matrices correspond to direction $\frac{\pa}{\pa\mu}$ have a bound. For LSIs, if the range of $\sigma$ is the whole $\RR$, then it is easy to see there will not exist a satisfying constant $\alpha > 0$ for LSI$(\alpha)$ to hold, i.e. $\frac{1}{\sigma^2} \geq 2\alpha, \ \forall \sigma \in \mbR$. However, if we restrict the range of $\sigma$ to a bounded region such as $[-M,M]$, then LSI$(\frac{1}{2M^2})$ will hold.
\begin{remark}
    Above calculation on gradient flows of the entropy does not establish LSI$(\alpha)$ for any specific distribution. It merely provides an example of using Hessian condition to study dynamical behaviors.
\end{remark}
\end{example}
\begin{example}[Laplacian distribution]
Consider the case of Laplacian distribution, where
\begin{equation*}
    G_W(m, \lambda)  = \begin{pmatrix}
	1 & 0		\\
	0 & \frac{2}{\lambda^4}
	\end{pmatrix}, \quad
	G_F(m, \lambda)  = \begin{pmatrix}
	\lambda^2 & 0		\\
	0 & \frac{1}{\lambda^2}
	\end{pmatrix},
\end{equation*}
from which we can calculate the Christoffel symbol as:
\begin{equation*} 
    \begin{aligned}
    \Gamma_{22}^{2(W)}(m, \lambda) & = \frac{g^{-1}_{22}}{2} (\pa_2 g_{22} + \pa_2 g_{22} - \pa_2 g_{22}) = \frac{g^{-1}_{22}}{2} \pa_2 g_{22} = \frac{\lambda^4}{4} \cdot \lp - \frac{8}{\lambda^5} \rpz = - \frac{2}{\lambda},      \\
    \Gamma_{ij}^{k(W)}(m, \lambda) & = 0 \qquad \text{otherwise}.
    \end{aligned}
\end{equation*}
Following the same procedure we have done before, the entropy and the relative entropy w.r.t. $p_{m_*, \lambda_*}$ defined on this model is provided by:
\begin{equation*}
    \begin{aligned}
        \wtd H(m,\lambda) & = - 1 + \log \lambda - \log 2 ,           \\
        \wtd H(m,\lambda|p*) & = - 1 + \log \lambda - \log \lambda_* + \lambda_*\lv m - m_* \rv + \frac{\lambda_* e^{ - \lambda\lv m - m_* \rv}}{\lambda},    \end{aligned}
\end{equation*}
from which we can calculate Wasserstein gradients associated with two functionals:
\begin{equation*}
    \begin{aligned}
        & \nabla_{m,\lambda}^W \wtd H(m,\lambda) = \left(
            \begin{aligned}
                \ & 0   \     \\
                \ & \frac{1}{\lambda} \ 
            \end{aligned}\right),   \\  
        & \nabla_{m,\lambda}^W \wtd H(m,\lambda|p_*) =
        \lbb
        \begin{aligned}
            & \left(
            \begin{aligned}
                & \lambda_*\lp 1 - e^{ - \lambda\lp m - m_* \rpz}\rpz         \\
                &  - \frac{\lp \lambda\lp m - m_* \rpz + 1 \rpz\lambda_*e^{ - \lambda\lp m - m_* \rpz} - \lambda}{\lambda^2} 
            \end{aligned}\right),  \ m > m_*,      \\
            & \left(
            \begin{aligned}
                & - \lambda_*\lp 1 - e^{ - \lambda\lp m_* - m \rpz}\rpz         \\
                & - \frac{\lp \lambda\lp m_* - m \rpz + 1 \rpz\lambda_*e^{ - \lambda\lp m_* - m \rpz} - \lambda}{\lambda^2}  
            \end{aligned}\right),  \ m < m_*,
        \end{aligned}\right.
    \end{aligned}
\end{equation*}
with the Fisher information functionals as:
\begin{equation*}
    \begin{aligned}
        \wtd I(m,\lambda) & = \frac{\lambda^2}{2},          \\
        \wtd I(m,\lambda|p_*) & = 
            \lambda_*^2\lp 1 - e^{ - \lambda\lv m - m_* \rv}\rpz^2 + \frac{\lp \lp \lambda\lv m - m_* \rv + 1 \rpz\lambda_*e^{ - \lambda\lv m - m_* \rv} - \lambda \rpz^2}{2}.
    \end{aligned}
\end{equation*}
Notice that the value of $\nabla_{m,\lambda}^W \wtd  H(m,\lambda|p_*)$ is not well-defined at point $m = m_*$. However, what we considered is integral on the whole $\RR$. Thus we can simply ignore its value at $m = m_*$. As before, LSI$\lp \alpha \rpz$ is given by
\begin{equation*}
    \begin{aligned}
            \ & \lambda_*^2\lp 1 - e^{ - \lambda\lv m_* - m \rv}\rpz^2 + \frac{\lp \lp \lambda\lv m_* - m \rv + 1 \rpz\lambda_*e^{ - \lambda\lv m_* - m \rv} - \lambda \rpz^2}{2}      \\
            \geq \  & 2\alpha \lp - 1 + \log \lambda - \log \lambda_* + \lambda_*\lv m - m_* \rv + \frac{\lambda_* e^{ - \lambda\lv m - m_* \rv}}{\lambda} \rpz.
    \end{aligned}
\end{equation*}
And we find Hessians of the entropy and the relative entropy in $(\Theta, G_W)$ are given by:
\begin{equation*}
    \begin{aligned}
        \Hessian_W \wtd H\lp m,\lambda \rpz & = \begin{pmatrix}
	0 & 0		\\
	0 & \frac{1}{\lambda^2}
	\end{pmatrix},      \\ 
	\Hessian_W \wtd H\lp m,\lambda | p_* \rpz & = \begin{pmatrix}
	\lambda\lambda_* e^{ - \lambda\lv m - m_* \rv} & 0		\\
	0 & \frac{1}{\lambda^2} + \frac{\lambda_* e^{ - \lambda\lv m - m_* \rv}\lp m_* - m \rpz^2 }{\lambda^3}
	\end{pmatrix}.
    \end{aligned}
\end{equation*}
Following the same analysis, we conclude that for gradient flows of the entropy $\wtd H(m,\lambda)$, a LSI$(\frac{\lambda^2}{4})$ holds. While for the relative entropy $H(m,\lambda|p_{m_*, \lambda_*})$, Hessian condition can be written as
\begin{equation*}
    \begin{pmatrix}
	\lambda\lambda_* e^{ - \lambda\lv m - m_* \rv} & 0		\\
	0 & \frac{1}{\lambda^2} + \frac{\lambda_* e^{ - \lambda\lv m - m_* \rv}\lp m_* - m \rpz^2 }{\lambda^3}
	\end{pmatrix} \geq \alpha \begin{pmatrix}
	1 & 0		\\
	0 & \frac{2}{\lambda^4}
	\end{pmatrix},
\end{equation*}
which can be reformulated as
\begin{equation*}
    \begin{aligned}
        \alpha = \min_{m,\lambda}\lbb \lambda\lambda_* e^{ - \lambda\lv m - m_* \rv}, \half\lp \lambda^2 + \lambda_* e^{ - \lambda\lv m - m_* \rv}\lambda\lp m_* - m \rpz^2  \rpz \rbb.
    \end{aligned}
\end{equation*}
From above formula, we conclude that in order to find a satisfying constant, it suffices to restrict the region of $m \in \lb -M, M \rb,\lambda \in \lb N, \infty \rpz$. The distribution La($m_*, \lambda_*$) satisfies a LSI$(\alpha)$ in Laplacian family with $\alpha$ given above.
\end{example} 
\begin{example}[Independent model]
For an independent family $p\lp x, y; \theta \rpz = p_1\lp x; \theta \rpz p\lp y; \theta \rpz$, we have
\begin{equation*}
    G_W = G_W^1 + G_W^2, \qquad G_F = G_F^1 + G_F^2.
\end{equation*}
The entropy and the relative entropy also have this separability property 
\begin{equation*}
    \begin{aligned}
        \wtd H\lp \theta \rpz & = \wtd H_1\lp \theta \rpz + \wtd H_2\lp \theta \rpz,   \\
        \wtd H\lp \theta | p_*\rpz & = \wtd H_1\lp \theta | p_{1*}\rpz + \wtd H_2\lp \theta |p_{2*} \rpz,       \\
        \nabla_{\theta} \wtd H\lp \theta | p_*\rpz & = \nabla_{\theta}\wtd H_1\lp \theta | p_{1*}\rpz + \nabla_{\theta}\wtd H_2\lp \theta |p_{2*} \rpz.
    \end{aligned}
\end{equation*}
The Fisher information functional is given by
\begin{equation*}
    \begin{aligned}
    & \ I\lp p_{\theta}| p_* \rpz \\
    = & \  \lp \nabla_{\theta}\wtd H_1\lp \theta | p_{1*}\rpz + \nabla_{\theta}\wtd H_2\lp \theta |p_{2*} \rpz \rpz^T \lp G_W^1 + G_W^2 \rpz^{-1} \lp \nabla_{\theta}\wtd H_1\lp \theta | p_{1*}\rpz + \nabla_{\theta}\wtd H_2\lp \theta |p_{2*} \rpz \rpz,
    \end{aligned}
\end{equation*}
with LSI$\lp \alpha \rpz$ given by
\begin{equation*}
    \begin{aligned}
        & \ \lp \nabla_{\theta}\wtd H_1\lp \theta | p_{1*}\rpz + \nabla_{\theta}\wtd H_2\lp \theta |p_{2*} \rpz \rpz^T \lp G_W^1 + G_W^2 \rpz^{-1} \lp \nabla_{\theta}\wtd H_1\lp \theta | p_{1*}\rpz + \nabla_{\theta}\wtd H_2\lp \theta |p_{2*} \rpz \rpz \\ \geq
        & \ 2\alpha \nabla_{\theta}\wtd H_1\lp \theta | p_{1*}\rpz + \nabla_{\theta}\wtd H_2\lp \theta |p_{2*} \rpz.
    \end{aligned}
\end{equation*}
\end{example}
\par
In conclusion, above examples introduce another way to prove functional inequalities as well as convergence rates of dynamics in probability families.
\section{Proofs in section \ref{Efficiency}}
\subsection{Proof of Theorem \ref{update}}\label{proof-main}
\begin{proof}[Proof of Theorem \ref{update}]
First, we postulate that $\nabla_x$ refers to the gradient w.r.t. $x$ variable while $\nabla_{\theta}$ refers to the gradient w.r.t. $\theta$ variable. We expand the function $l(x_t,\theta_t)$
\begin{equation*}
    l(x_t,\theta_t) = l(x_t,\theta_*) + \nabla_{\theta}l(x_t,\theta_*)\left(\theta_t - \theta_* \right) + O\left( \lv \theta_t - \theta_* \rv^2 \right).
\end{equation*}
By substrating $\theta_*$ in both sides of the updating equation and plugging in the expansion above, we get:
\begin{equation*}
    \begin{aligned}
    \theta_{t+1} - \theta_* = & \ \left( \theta_{t} - \theta_* \right) - \frac{1}{t}G_W^{-1}(\theta_t)\left(l(x_t,\theta_*) + \nabla_{\theta}l(x_t,\theta_*)\left(\theta_t - \theta_* \right)\right. \\
    & \ +  \left.O\left( \lv \theta_t - \theta_* \rv^2 \right)\right).
    \end{aligned}
\end{equation*}
Then, taking Wasserstein covariances of both sides, we get:
\begin{equation*}  
\begin{aligned}
V_{t+1} = V_t & + \frac{1}{t^2}\mbE_{p_{\theta_*}}  \lb  \nabla_x \lp G_W^{-1}(\theta_t) l(x_t,\theta_t) \rpz \cdot \nabla_x  \lp l(x_t,\theta_t)^{T} G_W^{-1}(\theta_t) \rpz   \rb \\
& - \frac{2}{t} \mbE_{p_{\theta_*}} \lb \nabla_x \left(\theta_t - \theta_* \right) \cdot \nabla_x \lp l(x_t,\theta_*)^{T} G_W^{-1}(\theta_t) \rpz \rb + o\lp \frac{V_t}{t} \rpz \\
&  - \frac{2}{t} \mbE_{p_{\theta_*}} \lb \nabla_x \left(\theta_t - \theta_* \right)  \cdot \nabla_x \left( \left(\theta_t - \theta_* \right)^T\nabla_{\theta}l(x_t,\theta_*)^T G_W^{-1} (\theta_t) \right) \rb,
\end{aligned}
\end{equation*}
where the last term corresponds to an expansion term $O\left( \lv \theta_t - \theta_* \rv^2 \right)$ and we use an assumption that $\mbE_{p_{\theta_*}} \lb \lp \theta_t - \theta_* \rpz^2 \rb,
\mbE_{p_{\theta_*}} \lb \lv \nabla_x \lp \theta_t - \theta_* \rpz \rv^2 \rb = o(1)$. In above formula, we eliminate transpose symbols ${}^T$ on metric tensor $G_W$ because of its symmetry. For the second term on the RHS, we have:
\begin{equation*}
	\begin{aligned}
		& \ \frac{1}{t^2}\mbE_{p_{\theta_*}} \lb  \nabla_x \lp G_W^{-1}(\theta_t) l(x_t,\theta_t) \rpz \cdot \nabla_x  \lp l(x_t,\theta_t)^{T} G_W^{-1}(\theta_t) \rpz   \rb	\\
		= & \ \frac{1}{t^2}\mbE_{p_{\theta_*}} \lb G_W^{-1}(\theta_*)\nabla_x \lp l(x_t,\theta_*) \rpz \cdot \nabla_x \lp l(x_t,\theta_*)^{T} \rpz G_W^{-1}(\theta_*)  \rb + o \lp \frac{1}{t^2} \rpz			\\
		= & \ \frac{1}{t^2} G_W^{-1}(\theta_*) \mbE_{p_{\theta_*}} \lb \nabla_x \lp l(x_t,\theta_*) \rpz \cdot \nabla_x \lp l(x_t,\theta_*)^{T} \rpz  \rb G_W^{-1}(\theta_*) + o \lp \frac{1}{t^2} \rpz,  
	\end{aligned}
\end{equation*}
where we use the following fact
\begin{equation*}
    \begin{aligned}
    	& \ \mbE_{p_{\theta_*}} \lb  \nabla_x \lp G_W^{-1}(\theta_t) l(x_t,\theta_t) \rpz \cdot \nabla_x  \lp l(x_t,\theta_t)^{T} G_W^{-1}(\theta_t) \rpz   \rb \\
    	& \ -  \mbE_{p_{\theta_*}} \lb     G_W^{-1}(\theta_*)\nabla_x \lp l(x_t,\theta_*) \rpz \cdot \nabla_x \lp l(x_t,\theta_*)^{T} \rpz G_W^{-1}(\theta_*)  \rb \\
    	= & \ O\lp \EE_{p_{\theta_*}} \lv \theta_t - \theta_* \rv \rpz = o\lp 1 \rpz.
    \end{aligned}
\end{equation*}
And the third term in the RHS can be reduced according to:
\begin{equation*}
	\begin{aligned}
		& - \frac{2}{t} \mbE_{p_{\theta_*}}\lb \nabla_x \left(\theta_t - \theta_* \right) \cdot \nabla_x \lp l(x_t,\theta_*)^{T} G_W^{-1}(\theta_t) \rpz \rb		\\
		= & - \frac{2}{t} \mbE_{p_{\theta_*}} \lb \nabla_x \left(\theta_t - \theta_* \right) \cdot \nabla_x \lp l(x_t,\theta_*)^{T} \rpz G_W^{-1}(\theta_t) \rb \\
		& \ - \frac{2}{t} 	\mbE_{p_{\theta_*}} \lb \nabla_x \left(\theta_t - \theta_* \right) \cdot l(x_t,\theta_*)^{T} \nabla_x G_W^{-1}(\theta_t) \rb				\\
		= & \ 0,
	\end{aligned}
\end{equation*}
where the first term vanishes because $\nabla_x \left(\theta_t - \theta_* \right)$ only has non-vanishing components at $x_1,...,x_{t-1}$ while $\nabla_x \lp f(x_t,\theta_*)^{T} \rpz$ only has a non-vanishing component at $x_t$. 
Consequently their inner product vanishes everywhere. While the second term vanishes by considering each element of this matrix, we have:
\begin{equation*}
	\begin{aligned}
		& \lp \mbE_{p_{\theta_*}} \lb \nabla_x \left(\theta_t - \theta_* \right) \cdot l(x_t,\theta_*)^{T} \nabla_x G_W^{-1}(\theta_t) \rb \rpz_{ij} \\
		= & \  \frac{2}{t}\mbE_{p_{\theta_*}}\nabla_x \left(\theta_t - \theta_* \right)_i \cdot \lp l(x_t,\theta_*)^{T} \nabla_x G_W^{-1}(\theta_t) \rpz_j 		\\
		= & \  \frac{2}{t}\mbE_{p_{\theta_*}} \lb \nabla_x \left(\theta_t - \theta_* \right)_i  \cdot \nabla_x \lp G_W^{-1}(\theta_t) _{kj} \rpz l(x_t,\theta_*)^{T}_k  \rb			\\
		= &  \ \frac{2}{t}\mbE_{p_{\theta_*}}\lb \nabla_x \left(\theta_t - \theta_* \right)_i  \cdot \nabla_x \lp G_W^{-1}(\theta_t) _{kj} \rpz \rb \mbE_{p_{\theta_*}}l(x_t,\theta_*)^{T}_k  	\\
		= & \ 0,
	\end{aligned}
\end{equation*}
where the third equality is guaranteed by the fact that $\theta_t - \theta_*$ is independent to $\nabla_{\theta}f(x_t,\theta_*)$ since $\theta_t, x_t$ are mutually independent. While the last equality holds by an assumption: 
\begin{equation*}
	\mbE_{p_{\theta_*}}l(x_t,\theta_*) = 0.
\end{equation*}
For the last term, same as the analysis of the third term, we find:
\begin{equation*}
	\begin{aligned}
		& - \frac{2}{t} \mbE_{p_{\theta_*}} \lb \nabla_x \left(\theta_t - \theta_* \right)  \cdot \nabla_x \left( \left(\theta_t - \theta_* \right)^T\nabla_{\theta}l(x_t,\theta_*)^T G_W^{-1} (\theta_t) \right) \rb			\\
		= &  - \frac{2}{t} \mbE_{p_{\theta_*}} \lb \nabla_x \left(\theta_t - \theta_* \right)  \cdot \nabla_x \left( \left(\theta_t - \theta_* \right)^T \right) \nabla_{\theta}l(x_t,\theta_*)^T			G_W^{-1} (\theta_t) 	\rb	\\
		& - \frac{2}{t} \mbE_{p_{\theta_*}} \lb \nabla_x \left(\theta_t - \theta_* \right)  \cdot \left(\theta_t - \theta_* \right)^T  \nabla_x 					\left( \nabla_{\theta}l(x_t,\theta_*)^T \right)  G_W^{-1} (\theta_t) 	\rb		\\
		& - \frac{2}{t} \mbE_{p_{\theta_*}} \lb \nabla_x \left(\theta_t - \theta_* \right)  \cdot \left(\theta_t - \theta_* \right)^T \nabla_{\theta}l(x_t,\theta_*)^T	 \nabla_x \left(   G_W^{-1} (\theta_t) \right)	\rb	\\
		= & - \frac{2}{t} \mbE_{p_{\theta_*}} \lb \nabla_x \left(\theta_t - \theta_* \right)  \cdot \nabla_x \left( \left(\theta_t - \theta_* \right)^T \right) \nabla_{\theta}l(x_t,\theta_*)^T			G_W^{-1} (\theta_*) \rb + o(\frac{V_t}{t})		\\
		& - \frac{2}{t} \mbE_{p_{\theta_*}} \lb \nabla_x \left(\theta_t - \theta_* \right)  \cdot \left(\theta_t - \theta_* \right)^T \nabla_{\theta}l(x_t,\theta_*)^T	 \nabla_x \left(  G_W^{-1} (\theta_t)  \right) \rb,
	\end{aligned}
\end{equation*}
where we again use the independent relation between $\left(\theta_t - \theta_* \right)$ and $f(x_t,\theta_*)$. The additional term appearing above, with the help that $\mbE_{p_{\theta_*}} \lb \nabla_{\theta}f(x_t,\theta_*) \rb = O(1)$, $\nabla_x \left(  G_W^{-1} (\theta_t) \right) = \nabla_x \theta_t \nabla_{\theta} \lp G_W^{-1} (\theta_t) \rpz = O(\nabla_x \lp \theta_t - \theta_* \rpz)$, can be further reduced to the form below:
\begin{equation*}
    \begin{aligned}
        & \ \frac{2}{t} \mbE_{p_{\theta_*}} \lb \nabla_x \left(\theta_t - \theta_* \right)  \cdot \left(\theta_t - \theta_* \right)^T \nabla_{\theta}l(x_t,\theta_*)	 \nabla_x \left(  \left( G_W^{-1} (\theta_t) \right) \right) \rb         \\
        = & \ \frac{2}{t} \mbE_{p_{\theta_*}} \lb \nabla_x \left(\theta_t - \theta_* \right)  \cdot \left(\theta_t - \theta_* \right)^T O(1)	 \nabla_x \lp \theta_t - \theta_* \rpz O(1) \rb      \\
        \leq & \ \frac{O(1)}{t} \sqrt{\mbE_{p_{\theta_*}}  \lb \lv \nabla_x \lp \theta_t - \theta_* \rpz \rv^2 \rb \mbE_{p_{\theta_*}} \lb \lp \theta_t - \theta_* \rpz^2 \rb \mbE_{p_{\theta_*}} \lb \lv \nabla_x \lp \theta_t - \theta_* \rpz \rv^2 \rb}     \\
        = & \ o\lp \frac{V_t}{t} \rpz.
    \end{aligned}
\end{equation*}
And the last term finally reduces to:
\begin{equation*}
    \begin{aligned}
    	& - \frac{2}{t} \mbE_{p_{\theta_*}} \lb \nabla_x \left(\theta_t - \theta_* \right)  \cdot \nabla_x \left( \left(\theta_t - \theta_* \right)^T \right) \nabla_{\theta}l(x_t,\theta_*)			\left( G_W^{-1} (\theta_*) \right) \rb + o(\frac{V_t}{t})       \\
    	= & - \frac{2V_t}{t} \mbE_{p_{\theta_*}} \lb \nabla_{\theta}l(x_t,\theta_*) \rb G_W^{-1} (\theta_*) + o(\frac{V_t}{t}).
    \end{aligned}
\end{equation*}	
\indent
Combining all the terms we have in hand, we derive the following updating equation for Wasserstein covariances during a natural gradient descent:
\begin{equation*}
    \begin{aligned}
     V_{t+1} = & \ V_t + \frac{1}{t^2} G_W^{-1}(\theta_*) \mbE_{p_{\theta_*}} \lb \nabla_x \lp l(x_t,\theta_*) \rpz \cdot \nabla_x \lp l(x_t,\theta_*)^{T} \rpz  \rb \left( G_W^{-1}(\theta_*) \right) \\ 
     & - \frac{2V_t}{t} \mbE_{p_{\theta_*}} \lb \nabla_{\theta}l(x_t,\theta_*) \rb G_W^{-1} (\theta_*) + o(\frac{V_t}{t}) + o\left( \frac{1}{t^2} \right) + O(\frac{V_t}{t^2}).
	\end{aligned}
\end{equation*}
\end{proof}
\begin{remark}
    The most frequently used tools in this proof is a separability property, c.f. proposition \ref{sep}. The key observation here is that, for two statistics $T_1,T_2$ which depend on (independent) different variables, such as $T_1 = T_1(x_1,...,x_{t-1})$, $T_2 = T_2(x_t,...,x_{t+n})$ are ``orthogonal'' in both Wasserstein and Fisher metrics. Specifically, consider gradients of $T_1,T_2$ w.r.t. $x$, since they depend on different variables, thus
    \begin{equation*}
        \Cov^W\lb T_1, T_2 \rb = \EE_{p_{\theta_*}}\lb \nabla_x T_1 \cdot \nabla_x T_2 \rb = 0.
    \end{equation*}
    This type of separability is a direct analog of the one in Fisher-Rao geometry:
    \begin{equation*}
        \Cov^F\lb T_1, T_2 \rb = \EE_{p_{\theta_*}}\lb T_1 T_2 \rb= \EE_{p_{\theta_*}}\lb T_1 \rb \cdot \EE_{p_{\theta_*}}\lb T_2 \rb = 0.
    \end{equation*}
\end{remark}
\subsection{Examples and numerical experiments of two efficiencies}\label{num-exp}
\begin{example}[Gaussian distribution]
 Consider the Gaussian distribution with mean value $\mu$ and standard variance $\sigma$: 
\begin{equation*}
p(x;\mu,\sigma)=\frac{1}{\sqrt{2\pi} \sigma}e^{-\frac{1}{2\sigma^2}(x-\mu)^2}.
\end{equation*}
The WIM satisfies 
\begin{equation*}
G_W(\mu,\sigma)=\begin{pmatrix}
1 & 0 \\
0 &  1
\end{pmatrix}.
\end{equation*}
The Fisher information matrix satisfies 
\begin{equation*}
G_F(\mu,\sigma) = \begin{pmatrix}
			\frac{1}{\sigma^2} & 0		\\
			0 & \frac{2}{\sigma^2}
		\end{pmatrix}.
\end{equation*}
Further, the matrix $G_FG_W^{-1}$ is given by:
\begin{equation*}
    G_F(\mu,\sigma)G_W^{-1}(\mu,\sigma) = \begin{pmatrix}
			\frac{1}{\sigma^2} & 0		\\
			0 & \frac{2}{\sigma^2}
		\end{pmatrix}.
\end{equation*}
And optimal parameters are given by $\mu_*, \sigma_*$. Thus we have following conclusions on efficiency of the Fisher, Wasserstein natural gradients and Wasserstein natural gradient on Fisher score (maximal likelihood estimator).       \\
\indent
The Wasserstein natural gradient is asymptotically efficient with an asymptotic Wasserstein covariance given by:
\begin{equation*}
    V_t = \frac{1}{t}\begin{pmatrix}
1 & 0 \\
0 &  1
\end{pmatrix} + O\lp \frac{1}{t^2} \rpz.
\end{equation*}
The Fisher natural gradient is asymptotic efficient with an asymptotic classical covariance given by:
\begin{equation*}
    V_t = \frac{1}{t}\begin{pmatrix}
\sigma_*^2 & 0 \\
0 &  \frac{\sigma_*^2}{2}
\end{pmatrix} + O\lp \frac{1}{t^2} \rpz.
\end{equation*}   
An interesting thing here is that the covariance matrix appears in the Wasserstein efficiency is independent of the optimal value. While in Fisher case, the asymptotic behavior depends a lot on the optimal parameter we obtain.       \\
\indent
For the last case, in Gaussian family, two metric tensors $G_F,G_W$ can be simultaneously diagonalized, thus the situation is even simpler. We denote the least significant eigenvalue of $G_F G_W^{-1}$ as $\alpha$:
\begin{equation*}
    \alpha = \frac{1}{\sigma_*^2}.
\end{equation*}
Further more, we have to figure out the term
\begin{equation*}
    \mbE_{p_{\mu_*,\sigma_*}} \lb \nabla_x \lp \nabla_{\mu_*,\sigma_*}l(x_t,\mu_*,\sigma_*) \rpz \cdot \nabla_x \lp \nabla_{\mu_*,\sigma_*}l(x_t,\mu_*,\sigma_*)^{T} \rpz  \rb,
\end{equation*}
that appears in the final result. In Gaussian, since we have Fisher scores $\nabla_{\mu_*,\sigma_*} l(x;\theta) = \Phi^F(x,\mu_*,\sigma_*)$ as:
\begin{equation*}
\begin{split}
\Phi_\mu^F(x;\mu,\sigma) = \frac{x - \mu}{\sigma^2}, \quad \Phi_\sigma^F(x;\mu,\sigma) = \frac{(x-\mu)^2}{\sigma^3} - \frac{1}{\sigma}.
\end{split}
\end{equation*}
Via calculation, we have
\begin{equation*}  
    \begin{aligned}
        \mbE_{p_{\mu_*,\sigma_*}} \lb \nabla_x \Phi_\mu^F(x;\mu_*,\sigma_*) \cdot \nabla_x \lp \Phi_\mu^F(x;\mu_*,\sigma_*)^{T} \rpz  \rb = & \ \mbE_{p_{\mu_*,\sigma_*}}\lb \frac{1}{\sigma_*^4} \rb = \frac{1}{\sigma_*^4},      \\
        \mbE_{p_{\mu_*,\sigma_*}} \lb \nabla_x \Phi_\mu^F(x;\mu_*,\sigma_*) \cdot \nabla_x \lp \Phi_{\sigma}^F(x;\mu_*,\sigma_*)^{T} \rpz  \rb = & \ \mbE_{p_{\mu_*,\sigma_*}}\lb \frac{1}{\sigma_*^2} \cdot \frac{2(x - \mu_*)}{\sigma_*^3} \rb = 0,      \\
        \mbE_{p_{\mu_*,\sigma_*}} \lb \nabla_x \Phi_{\sigma}^F(x;\mu_*,\sigma_*) \cdot \nabla_x \lp \Phi_{\sigma}^F(x;\mu_*,\sigma_*)^{T} \rpz  \rb = & \ \mbE_{p_{\mu_*,\sigma_*}}\lb \frac{4\lp x - \mu_* \rpz^2}{\sigma_*^6} \rb = \frac{4}{\sigma_*^4},
    \end{aligned}
\end{equation*}
we conclude the middle term is given by
\begin{equation*}
    \mathfrak{I} = \mbE_{p_{\mu_*,\sigma_*}} \lb \nabla_x \lp \nabla_{\mu_*,\sigma_*}l(x_t,\mu_*,\sigma_*) \rpz \cdot \nabla_x \lp \nabla_{\mu_*,\sigma_*}l(x_t,\mu_*,\sigma_*)^{T} \rpz  \rb = \begin{pmatrix}
        \frac{1}{\sigma_*^4} & 0        \\
        0 & \frac{4}{\sigma_*^4}
    \end{pmatrix}.
\end{equation*}
And when we have $\frac{2}{\sigma_*^2} > 1$, the inverse matrix of $2B - \I$ is given by
\begin{equation*}
    \lp 2B - \I \rpz^{-1} = \begin{pmatrix}
        \frac{\sigma_*^2}{2 - \sigma_*^2} & 0  \\
        0 & \frac{\sigma_*^2}{4 - \sigma_*^2}
    \end{pmatrix}.
\end{equation*}
Consequently, the term appearing in the asymptotic behavior of the Poincar\'{e} efficiency is given by
\begin{equation*}
    \begin{aligned}
        & \frac{1}{t}\lp 2G_FG_W^{-1} - \I \rpz^{-1}G_W^{-1}(\theta_*) \mathfrak{I} \left( G_W^{-1}(\theta_*) \right)        \\
        = & \begin{pmatrix}
        \frac{\sigma_*^2}{2 - \sigma_*^2} & 0  \\
        0 & \frac{\sigma_*^2}{4 - \sigma_*^2}
        \end{pmatrix}\begin{pmatrix}
        \frac{1}{\sigma_*^4} & 0        \\
        0 & \frac{4}{\sigma_*^4}
    \end{pmatrix}      \\
        = & \begin{pmatrix}
        \frac{1}{\lp 2 - \sigma_*^2 \rpz \sigma_*^2} & 0  \\
        0 & \frac{4}{\lp 4 - \sigma_*^2 \rpz \sigma_*^2}
    \end{pmatrix}.
    \end{aligned}
\end{equation*}
Thus the asymptotic behavior the Wasserstein covariance in the Wasserstein natural gradient of Fisher scores is given by:
\begin{equation*}
       V_t = \left\{
    \begin{aligned}
        & O\lp t^{-\frac{2}{\sigma_*^2}} \rpz, \qquad \qquad \qquad \qquad \qquad \quad  \ \ \frac{1}{\sigma_*^2} \leq \half,        \\
        & \frac{1}{t}\begin{pmatrix}
        \frac{1}{\lp 2 - \sigma_*^2 \rpz \sigma_*^2} & 0  \\
        0 & \frac{4}{\lp 4 - \sigma_*^2 \rpz \sigma_*^2}
    \end{pmatrix} + O(\frac{1}{t^2}), \qquad \frac{1}{\sigma_*^2} > \half.
    \end{aligned}\right. 
\end{equation*}
\par
We verify our theory by following numerical experiments. In two cases, we verify two kinds of efficiency, namely the Wasserstein-Cramer-Rao efficiency and the Poincar\'{e} efficiency respectively. In the first experiment, we verify the constant $G_W^{-1}$ appearing in asymptotic efficiency of the Wasserstein natural gradient. While for the other situation we verify the asymptotic exponential index $\alpha$ showing up in Poincar{\'e} efficiency.
\begin{figure}[H]
  \centering
  \centerline{\includegraphics[width=0.8\linewidth]{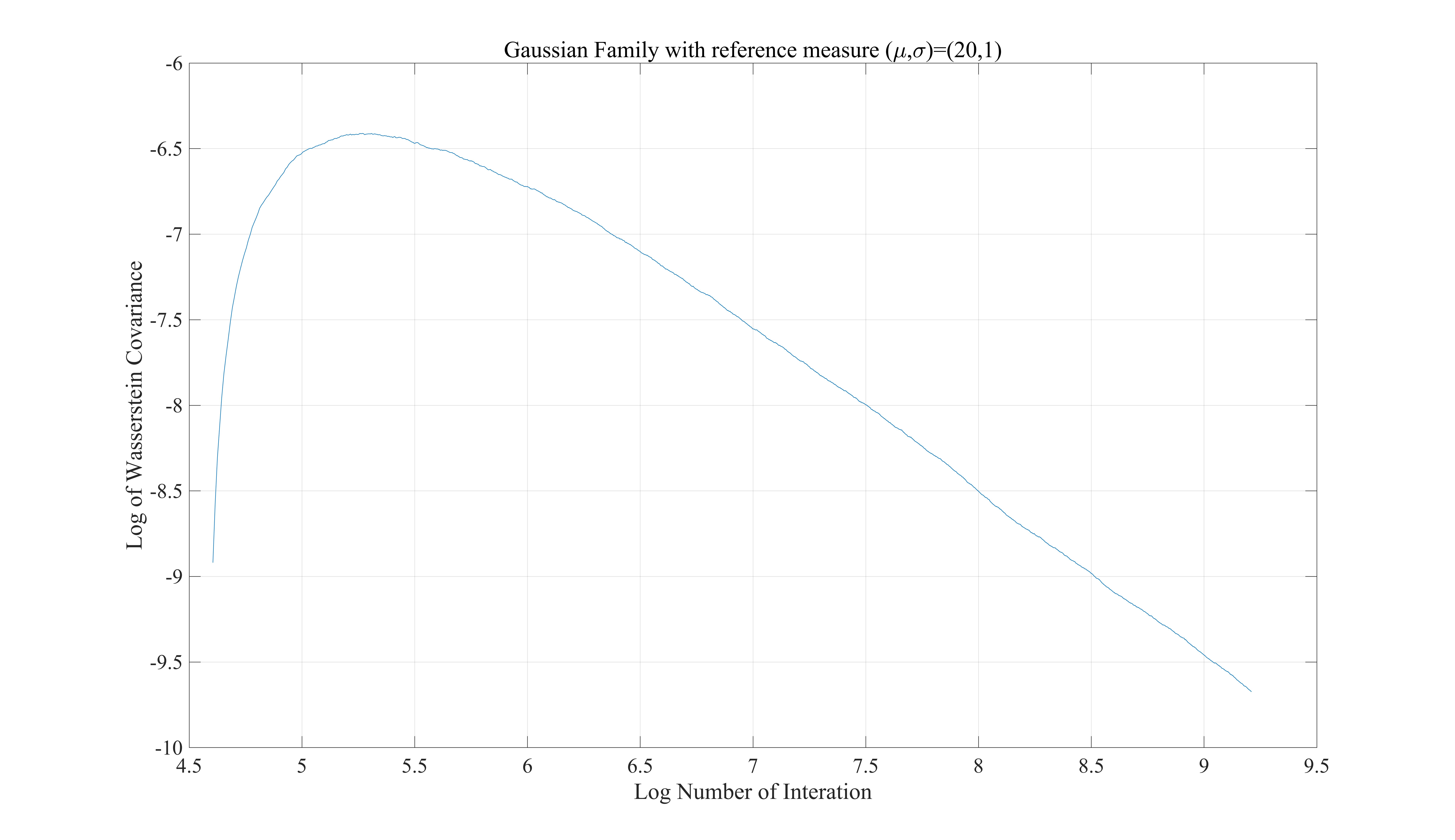}}
  \caption{The Wasserstein-Cramer-Rao Type Convergence Rate. Here x-axis represents the logarithm of iteration $t$ while y-axis represents the logarithm of Wasserstein covariance $V_t$. We take the reference measure in KL-divergence to be Gaussian $\cn\lp 20, 1 \rpz$ where the parameter $\mu_* = 20$ is the optimal point we aim to estimate. Since we have $\frac{1}{\sigma_*^2} = 1 > \half$, the Cramer-Rao type convergence holds.}
  \label{fig:CR}
\end{figure}
\begin{figure}[H]
  \includegraphics[width=0.8\linewidth]{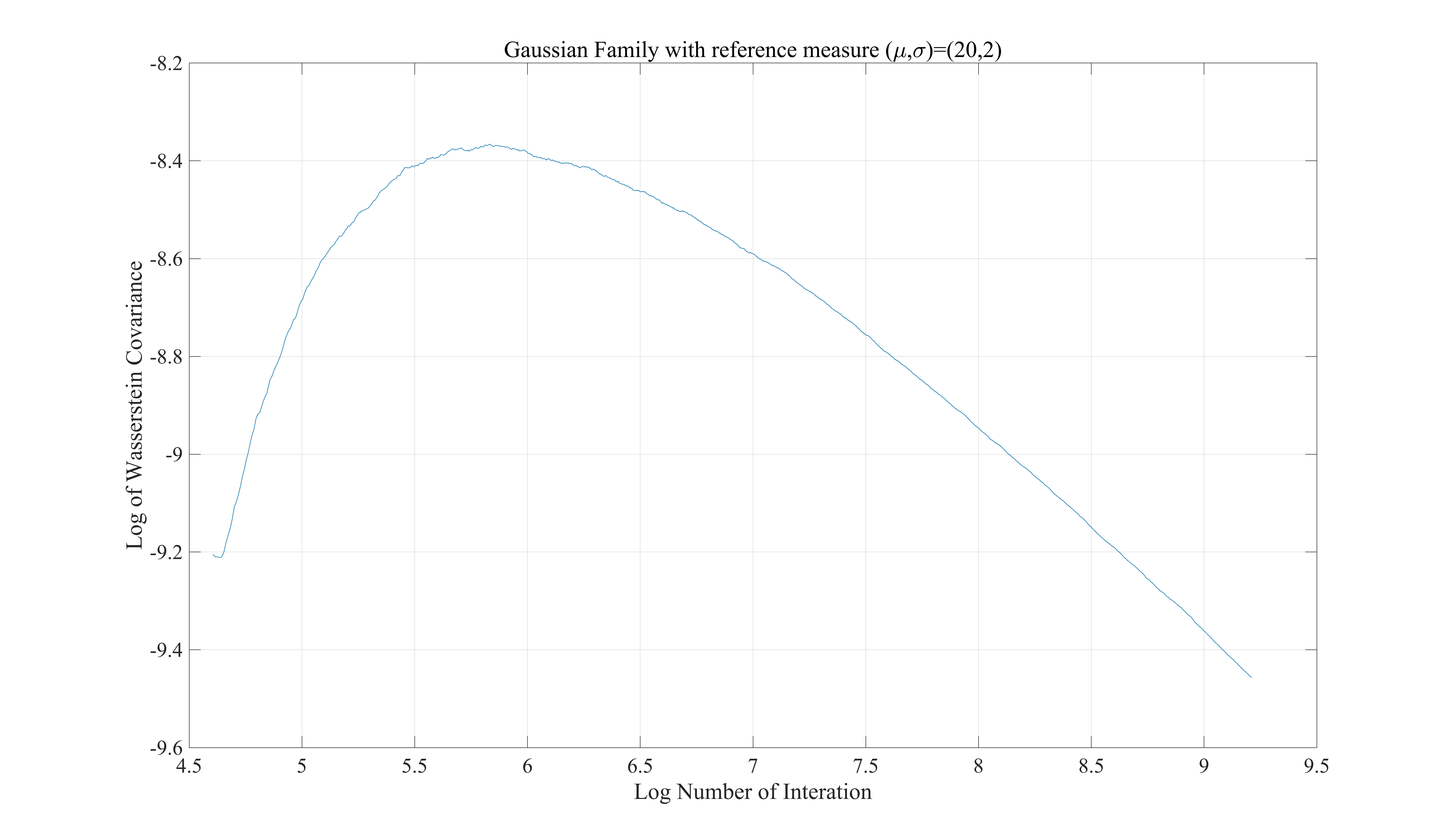}
  \caption{Poincar\'{e} Type Convergence Rate. Here x-axis represents the logarithm of iteration $t$ while y-axis represents the logarithm of Wasserstein covariance $V_t$. We take the reference measure in KL-divergence to be Gaussian $\cn\lp 20, 1 \rpz$ where the parameter $\mu_* = 20$ is the optimal point we aim to estimate. Since we have $\frac{1}{\sigma_*^2} = \frac{1}{4} < \half$, the Poincar\'{e} type convergence holds.}
  \label{fig:Po}
\end{figure}
\end{example}

\end{document}